\newtheorem{thm}{{\CC 6( {\char64}m}}[section]
\newtheorem{prop}[thm]{Proposition}
\newtheorem{rem}{Remark}[section]
\newtheorem{theorem}{Theorem}[section]
\newtheorem{algorithm}{Algorithm}
\def\be{\begin{equation}}
\def\ee{\end{equation}}
\def\br{\begin{eqnarray*}}
\def\er{\end{eqnarray*}}
\begin{document}
%\renewcommend{theequation}{\thesection.\arabic{equation}}
%\newcommend{\be}{begin{equation}}
%\newcommend{\\ee}{\end{equation}}
\begin{center}
\Large\bf{A framework of the harmonic Arnoldi method for evaluating $\varphi$-functions with applications to exponential integrators
}\\
%
%\renewcommend{\thefootnote}{\fnsymbol{footnote}}
\quad\\

\normalsize~
Gang Wu\footnote[1]{Corresponding author (G. Wu). Department of Mathematics,
China University of Mining and Technology \& School of Mathematics and Statistics, Jiangsu Normal University, Xuzhou, 221116, Jiangsu, P.R. China.
E-mail: {\tt gangwu76@126.com} and {\tt wugangzy@gmail.com}. This author is
supported by the National Science Foundation of China under grant 11371176, the National Science Foundation of Jiangsu Province under grant BK20131126, the 333 Project of Jiangsu Province, and the Talent Introduction Program of China University of Mining and Technology.},
~Lu Zhang\footnote[2]{Department of Mathematics, University of Macau, Macao, P.R. China.
Email: {\tt yulu7517@126.com}.},
~Ting-ting Xu\footnote[3]{School of Mathematics and Statistics, Jiangsu Normal University, Xuzhou, 221116, Jiangsu, P.R. China.
Email: {\tt xutingtingdream@163.com}. This author is supported by the Postgraduate Innovation Project of Jiangsu Normal University under grant 2013YYB110.}
\end{center}

\begin{abstract}
In recent years, a great deal of attention has been focused on numerically solving exponential
integrators.
The important ingredient to the implementation of exponential integrators is the efficient and accurate evaluation of the so called $\varphi$-functions on a given vector. The Krylov subspace method is an important technique for this problem. For this type of method, however, restarts become essential for the sake of storage requirements
or due to the growing computational complexity of evaluating the matrix function on a Hessenberg matrix of growing
size. Another problem in computing $\varphi$-functions is the lack of
a clear residual notion. The contribution of this work is threefold. First, we introduce a framework of the harmonic Arnoldi method for $\varphi$-functions, which is based on the residual and the oblique projection technique. Second, we establish the relationship between the harmonic Arnoldi approximation and the classical Arnoldi approximation, and compare the harmonic Arnoldi method with the Arnoldi method from a theoretical point of view. Third, we apply the thick-restarting strategy to the harmonic Arnoldi method, and propose a thick-restated harmonic Arnoldi algorithm for evaluating $\varphi$-functions. An advantage of the new algorithm is that we can compute several $\varphi$-functions simultaneously in the same search subspace. We show the merit of augmenting approximate eigenvectors in the search subspace, and give insight into the relationship between the error and the residual of $\varphi$-functions. Numerical experiments show the superiority of our new algorithm over many state-of-the-art algorithms for the computation of $\varphi$-functions.

\mbox{\bf Keywords:}  Exponential integrators, $\varphi$-functions, Matrix exponential, Harmonic Arnoldi method, Oblique projection method, Thick-restarting strategy.

\mbox{\bf AMS classifications: } 65F60, 65F15, 65F10.
\end{abstract}

\section{Introduction}

\setcounter{equation}{0}

Exponential integrators have been employed in various large scale
computations \cite{Higham,Hochbruck-Ostermann-2010}, such as reaction-diffusion systems \cite{Fri}, mathematical finance \cite{Rag}, classical and quantum-classical molecular dynamics \cite{Sch},
Schr\"{o}dinger equations \cite{Ber}, Maxwell equations \cite{Bot}, regularization of ill-posed problems \cite{Hoc1}, and so on.
The key to the implementation of exponential integrators is the efficient and accurate
evaluation of the matrix exponential and other $\varphi$-functions. These $\varphi$-functions
are defined for scalar arguments by the integral representation
\begin{equation}\label{equ1}
%$$
\varphi_{0}(z)={e}^{z}~~{\rm and}\quad\varphi_{\ell}(z)=\frac{1}{(\ell-1)!}{\int_0^1 e^{(1-\theta)z}\theta^{\ell-1}d\theta},\quad \ell=1,2,\ldots,\quad z\in \mathbb{C},
%$$
\end{equation}
moreover, these $\varphi$-functions satisfy the following recurrence relation
\begin{equation}\label{eqn11}
\varphi_{\ell}(z)=z\varphi_{\ell+1}(z)+\frac{1}{\ell!},\quad \ell=0,1,2,\ldots
\end{equation}
The definition can be extended to matrices instead of scalars using any of the
available definitions of matrix functions \cite{Hochbruck-Ostermann-2010,NW2}.

Exponential integrators constitute an interesting class of numerical methods
for the time integration of stiff systems of differential equations. The so-called $\varphi$-functions and their evaluation are crucial for stability and speed of exponential integrators.
The important ingredient to implementation of exponential integrators is the computation of the matrix exponential
and related $\varphi$-functions on a given vector \cite{Hochbruck-Ostermann-2010,NW2}.
In some practical applications, it is required to compute a few $\varphi$-functions on a given vector \cite{Hochbruck-Ostermann-2010}
\begin{equation}\label{eqn1.3}
{\bf y}(t)=\varphi_{\ell}(-tA){\bf v},\quad \ell=0,1,\ldots,s,
\end{equation}
where $A$ is a large scale matrix and $s\geq 0$ is a user-prescribed parameter. In this paper, we assume that $-tA$ is semi-negative definite, i.e., the real part of the spectrum of $-tA$ lies in the left half plane, and we are interested in solving the $(s+1)$ vectors {\it simultaneously} in the {\it same} search subspace.

Recently, a great deal of attention has been focused on numerical solution of exponential integrators.
For instance, a MATLAB package called EXPINT \cite{Ber2} is provided which aims to facilitate the quick deployment and testing of exponential
integrators.
This approach is based on a modification of
the scaling and squaring technique for
the matrix exponential \cite{Higham,Higham-2009,B-W-2009}, and is suitable for $\varphi$-functions of medium sized matrices. In \cite{ST}, Schmelzer and Trefethen show that the $\varphi$-functions can be evaluated by using rational approximations constructed via Carath\'{e}odory-Fej\'{e}r
approximation or contour integrals.
In \cite{NW2}, an adaptive Krylov subspace
algorithm is proposed for evaluating the
$\varphi$-functions appearing in exponential integrators.
The {\it phipm} function is given for calculating the action of linear
combinations of $\varphi_{\ell}$ on operand vectors, and it can be
considered as an extension of the codes provided in EXPOKIT \cite{Sidje}.
We refer to the review paper \cite{Hochbruck-Ostermann-2010} and the references therein for
the properties of exponential integrators and some efficient numerical methods for solving them.

The Krylov subspace methods are popular techniques for the computation of $\varphi$-functions \cite{E-Y-1992,T-V-2013,Goc,Higham,M-C-1997,Hoc,Hochbruck-Ostermann-2010,NW1,NW2,Rag,WW}, in which
the Arnoldi method is a widely used one \cite{NW1,NW2,Saad,Sidje}. In this method, the large matrix $A$ is projected into
a much smaller subspace, then the matrix function is applied to the reduced matrix (or the projection matrix),
and finally the approximation is projected back to the original large space.
However, the maximum number of iterations that can be performed is often limited by
the storage requirements of the full Arnoldi basis.
A further limiting factor is the growing orthogonalization cost of computing the Arnoldi basis and the cost of evaluating
the matrix function of the projection matrix for larger values of Arnoldi steps.

In order to overcome these difficulties, several alternative approaches have also been proposed. The first one is
to use other subspaces with superior approximation
properties,
such as the extended Krylov subspace methods \cite{V-L-1998,T-V-2013,L-V-2010} or the shift-and-invert Krylov subspace methods \cite{Goc,IM,Pang,MP,Rag,vH}.
Both of them can be viewed as special cases of the rational Krylov subspace methods \cite{B-L-2009,Goc,S-2013,S-L-2013,Novati-2011}, with the aim to reach a targeted accuracy within significantly fewer iterations.
However, the rational Krylov subspace methods require to solve a (shifted) linear system
at each Arnoldi step, which is a major drawback for situations
when $A$ is large or the matrix is not explicitly available but only implicitly as a
routine returning matrix-vector products.

The other possible approach for circumventing the problems mentioned above is based on restarting. The restarted Krylov
subspace methods \cite{ETNA,MA,SINUM,EEG,IMA,SISC} restart the Arnoldi process periodically,
to avoid storing large sets of Arnoldi basis vectors. In \cite{EEG}, a deflated restarting technique was proposed to accelerate the convergence of the restarted Arnoldi approximation. Its effect is to ultimately deflate a specific invariant subspace of the matrix which most impedes the convergence of the restarted Arnoldi approximation process. Recently,
Frommer {\it et al.} utilized an integral representation for the error of the iterates in the Arnoldi method, and
developed a quadrature-based algorithm with deflated restarting \cite{From}.
However, as was pointed out in \cite{From}, none of the restarting approaches for
general matrix functions was completely satisfactory until now. For instance, all of these variants
may solve the storage problem for the Arnoldi basis, but still have to suffer from operating complexity, growing
cost per restart cycle \cite{EEG}, numerical instability \cite{IMA}, and so on.

Another difficulty arises in the computation of matrix functions is the lack of
a clear residual notion.
The residual can provide a reliable stopping criterion, moreover, it can be used to restart the
iterative methods.
For the matrix exponential function in connection with Krylov approximation, the residual
expression can be found in \cite{Residual,EC,M-C-1997}. In \cite{Car}, one can find a discussion of the residual for the $\varphi_{1}$ function with respect to a Krylov approximation.
Recently, Kandolf {\it et al.} \cite{Kan} considered a residual-based error estimate for Leja interpolation of $\varphi$-functions.

In recent years, special attention has been paid to the harmonic Arnoldi method for matrix functions.
In \cite{Harmonic}, Hochbruck and Hochstenbach reviewed three different derivations of the harmonic Ritz approach for matrix functions.
The idea behind the harmonic Ritz approximation is that for some functions, a particular target
may be important \cite{Harmonic}.
More precisely, it is desirable to deflate some eigenvalues close to a given shift, so that the convergence speed can be improved \cite{EEG,Frommer}.
In \cite{Car}, the harmonic Ritz approach was applied to the computation of $\varphi_1$ matrix function. The harmonic Ritz approach was investigated in \cite{Frommer} for the convergence of restarted Krylov subspace method for Stieltjes functions of matrices. To our best knowledge, however, the relation between the harmonic Arnoldi approximation and the Arnoldi approximation is still unknown.

In this paper, we investigate the residual of the $\varphi$-functions, and introduce a harmonic Arnoldi method for (\ref{eqn1.3}) that is based on the residual and the oblique projection technique. Second, we establish the relationship between the harmonic Arnoldi approximation and the classical Arnoldi approximation, and compare the harmonic Arnoldi method with the Arnoldi method from a theoretical point of view. Furthermore, we apply the thick-restarting strategy \cite{WS} to the harmonic Arnoldi method, and propose a thick-restated harmonic Arnoldi algorithm for evaluating the $\varphi$-functions. An advantage of this new algorithm is that one can evaluate the $(s+1)$ vectors in (\ref{eqn1.3}) {\it simultaneously}, and solve them in the {\it same} search subspace. We show the merit of augmenting approximate eigenvectors in the thick-restarting strategy, and give insight into the relation between the error and the residual of the harmonic Arnoldi approximation. Numerical experiments show the efficiency of our new algorithm and its superiority over many state-of-the-art algorithms for $\varphi$-functions.

This paper is organized as follows. In section 2, we briefly introduce
the Arnoldi method and shift-and-invert Arnoldi method for the computation of $\varphi$-functions. In section 3, we
focus on the harmonic Arnoldi method and investigate the relationship between the harmonic Arnoldi approximation
and the classical Arnoldi approximation. Moreover, we propose a thick-restarted harmonic Arnoldi algorithm which can be used to solve the $(s+1)$ vectors
in (\ref{eqn1.3}) simultaneously.
The relationship between the error and the residual of the harmonic Arnoldi approximation is derived in section 4. In Section 5,
we show the advantage of augmenting approximate eigenvectors in the search subspace of a Krylov subspace method. Numerical experiments are reported in Section 6.

\section{The Arnoldi and the shift-and-invert Arnoldi methods for $\varphi$-functions}

\setcounter{equation}{0}

%The Krylov subspace method has become an important tool for computing
%matrix functions \cite{Higham,ML,Saad}.
In this section, we briefly introduce the Arnoldi method and the shift-and-invert Arnoldi method for $\varphi$-functions, and investigate the residuals of the approximations obtained from these two approaches. We show that the Arnoldi method is an orthogonal projection method, while the shift-and-invert Arnoldi method is an oblique projection method.

\subsection{The Arnoldi and the shift-and-invert Arnoldi methods for matrix exponential}

In this subsection, we consider the action of the $\varphi_0$ matrix function (or the matrix exponential) on a given vector
%\begin{equation}\label{equ11}
$$
{\bf y}(t)=\varphi_0(-tA){\bf v}={\rm exp}(-tA){\bf v}.
$$
%\end{equation}
%where $-tA$ is a negative semi-definite matrix, $t$ is a scalar, and ${\bf v}$ is a non-zero vector.
Let ${\bf v}_1={\bf v}/\|{\bf v}\|_2$, then in exact arithmetic, the $k$-step Arnoldi process will generate an orthonormal basis $V_{k+1}=[{\bf v}_1,{\bf v}_2,\ldots,{\bf v}_{k+1}]$ for the Krylov subspace
$\mathcal{K}_{k+1}(A,{\bf v}_1)={\rm span}\{{\bf v}_1,A{\bf v}_1,\ldots,{A^{k}}{\bf v}_1\}$. The following Arnoldi relation holds \cite{Stewart}
\begin{equation}\label{equ22}
 AV_{k}=V_{k}H_{k}+h_{k+1,k}{\bf v}_{k+1}{\bf e}_{k}^{\rm H},
\end{equation}
where $H_{k}$ is a $k$-by-$k$ upper-Hessenberg matrix,
${\bf e}_{k}\in \mathbb{R}^{k}$ is the $k$-th column of the identity matrix, and $(\cdot)^{\rm H}$ denotes the conjugate transpose of a vector or matrix. The Arnoldi method makes use of \cite{Saad}
$$
{\bf y}_{k}(t)=V_{k}{\rm exp}(-tH_{k})\beta{\bf e}_{1}\equiv V_k{\bf u}_k(t),
$$
as an approximation to ${\bf y}(t)$, where ${\bf u}_k(t)={\rm exp}(-tH_{k})\beta{\bf e}_{1}$ and $\beta=\|{\bf v}\|_{2}$.
Notice that
${\bf u}_{k}'(t)=-H_{k}{\bf u}_{k}(t)$ and ${\bf u}_{k}(0)=\beta {\bf e}_{1}$,
thus
$$
{\bf y}_{k}'(t)=V_k{\bf u}_k'(t)=-V_{k}H_{k}{\rm exp}(-t H_{k})\beta{\bf e}_{1}.
$$
It follows from (\ref{equ22}) that the residual is \cite{Residual}
\begin{equation}\label{equ24}
{\bf r}_{k}(t)=-A{\bf y}_{k}(t)-{\bf y}_{k}'(t)=- h_{k+1,k}\Big[{\bf e}_{k}^{\rm H}{\rm exp}(-t H_{k})\beta{\bf e}_{1}\Big]{\bf v}_{k+1},
\end{equation}
and
$$
\|{\bf r}_{k}(t)\|_2=\Big| h_{k+1,k}\big[{\bf e}_{k}^{\rm H}{\rm exp}(-t H_{k})\beta{\bf e}_{1}\big]\Big|.
$$
We see from (\ref{equ24}) that the residual vector ${\bf r}_{k}(t)$ is colinear with the $(k+1)$-th basis vector ${\bf v}_{k+1}$, and it is orthogonal to the search space ${\rm span}\{V_k\}$, i.e.,
\begin{equation}\label{eqn2.5}
\left\{\begin{array}{c}
{\bf y}_{k}(t)=V_{k}{\rm exp}(-tH_{k})\beta{\bf e}_{1}~\in~{\rm span}\{V_{k}\},\\
-A{\bf y}_{k}(t)-{\bf y}_{k}'(t)~\bot~{\rm span}\{V_{k}\}.
\end{array}\right.
\end{equation}
Thus, the Arnoldi method for matrix exponential is an orthogonal projection method \cite{Residual,Stewart}.

In recent works on the approximations of matrix functions by Krylov subspace methods, it becomes more and more apparent that the
shift-and-invert Arnoldi method works tremendously better than the standard Arnoldi method  \cite{Goc,IM,Pang,MP,Rag,vH}.
In this type of method, the Krylov subspace is generated by using the matrix $(I+\gamma A)^{-1}$ instead of $A$, where $\gamma$ is a user-described parameter. Let $\widetilde{\bf v}_1={\bf v}/\|{\bf v}\|_2$, then in exact arithmetic, the $k$-step shift-and-invert Arnoldi process generates an orthonormal basis $\widetilde{V}_{k+1}$ for the Krylov subspace $\mathcal{K}_{k+1}\big((I+\gamma A)^{-1},\widetilde{\bf v}_1\big)={\rm span}\big\{\widetilde{\bf v}_1,(I+\gamma A)^{-1}\widetilde{\bf v}_1,\ldots,[(I+\gamma A)^{-1}]^{k}\widetilde{\bf v}_1\big\}$, and we have the following relation
\begin{equation}\label{equ25}
(I+\gamma A)^{-1}\widetilde{V}_{k}=\widetilde{V}_{k}\widetilde{H}_{k}+\widetilde{h}_{k+1,k}\widetilde{\bf v}_{k+1}{\bf e}_{k}^{\rm H},
\end{equation}
where $\widetilde{H}_{k}$ is a $k$-by-$k$ upper-Hessenberg matrix. If $\widetilde{H}_k$ is nonsingular, we denote
$B_{k}= \frac{\widetilde{H}_{k}^{-1}-I}{\gamma}$, then the shift-and-invert Arnoldi method uses
$$
\widetilde{\bf y}_{k}(t)=\widetilde{V}_k{\rm exp}(-t B_{k})\beta{\bf e}_{1}\equiv \widetilde{V}_{k}\widetilde{\bf u}_{k}(t)
$$
as an approximation to the desired solution, where $\widetilde{\bf u}_{k}(t)={\rm exp}(-t B_{k})\beta{\bf e}_{1}$.

Rewrite the relation (\ref{equ25}) as
\begin{equation}\label{equ26}
A\widetilde{V}_{k}=\widetilde{V}_{k}B_{k}-\frac{\widetilde{h}_{k+1,k}}{\gamma}(I+\gamma A)\widetilde{\bf v}_{k+1}{\bf e}_{k}^{\rm H}\widetilde{H}_{k}^{-1},
\end{equation}
then we have that
$$
A\widetilde{\bf y}_{k}(t)=A\widetilde{V}_{k}\widetilde{\bf u}_{k}(t)=\Big[\widetilde{V}_{k}B_{k}-\frac{\widetilde{h}_{k+1,k}}{\gamma}(I+\gamma A)\widetilde{\bf v}_{k+1}{\bf e}_{k}^{\rm H}\widetilde{H}_{k}^{-1}\Big]{\rm exp}(-t B_{k})\beta{\bf e}_{1},
$$
and
$$
\widetilde{\bf y}_{k}'(t)=-\widetilde{V}_{k}B_{k}{\rm exp}(-t B_{k})\beta{\bf e}_{1}.
$$
So the residual can be expressed as \cite{Residual}
\begin{equation}\label{equ27}
\widetilde{\bf r}_{k}(t)=-A\widetilde{\bf y}_{k}(t)-\widetilde{\bf y}_{k}'(t)=\frac{\widetilde{h}_{k+1,k}}{\gamma}\Big[{\bf e}_{k}^{\rm H}\widetilde{H}_{k}^{-1}{\rm exp}(-t B_{k})\beta{\bf e}_{1}\Big](I+\gamma A)\widetilde{\bf v}_{k+1},
\end{equation}
and
$$
\|\widetilde{\bf r}_{k}(t)\|_2=\Big|\frac{\widetilde{h}_{k+1,k}}{\gamma}\Big[{\bf e}_{k}^{\rm H}\widetilde{H}_{k}^{-1}{\rm exp}(-tB_{k})\beta{\bf e}_{1}\Big]\Big|\cdot\|\widetilde{\bf v}_{k+1}+\gamma A\widetilde{\bf v}_{k+1}\|_2.
$$
It is seen from (\ref{equ27}) that the residual vector $\widetilde{\bf r}_{k}(t)$ is colinear with $(I+\gamma A)\widetilde{\bf v}_{k+1}$, and it is orthogonal to the space ${\rm span}\{(I+\gamma A)^{\rm -H}\widetilde{V}_{k}\}$:
$$
\big[(I+\gamma A)^{\rm -H}\widetilde{V}_{k}\big]^{\rm H}\widetilde{\bf r}_{k}(t)
=\widetilde{V}_{k}^{\rm H}(I+\gamma A)^{-1}\big[-A\widetilde{\bf y}_{k}(t)-\widetilde{\bf y}_{k}'(t)\big]={\bf 0}.
$$
That is,
\begin{equation}\label{eqn27}
\left\{\begin{array}{c}
\widetilde{\bf y}_{k}(t)=\widetilde{V}_k{\rm exp}(-t B_{k})\beta{\bf e}_{1}~\in~{\rm span}\{\widetilde V_{k}\},\\
-A\widetilde{\bf y}_{k}(t)-\widetilde{\bf y}_{k}'(t)~\bot~{\rm span}\{(I+\gamma A)^{\rm -H}\widetilde{V}_{k}\}.
\end{array}\right.
\end{equation}
In other words, the shift-and-invert Arnoldi method for matrix exponential is an oblique projection method.

\subsection{The Arnoldi and the shift-and-invert Arnoldi methods for $\varphi_{\ell}~(\ell\geq 1)$ functions}

We consider the problem of
\begin{equation}\label{eqn111}
{\bf y}(t)=\varphi_{\ell}(-tA){\bf v},\quad \ell=1,2,\ldots,s.
\end{equation}
Given the Arnoldi relation (\ref{equ22}), the Arnoldi method uses \cite{NW1,NW2}
$$
{\bf y}_{\ell,k}(t)=V_{k}\varphi_{\ell}(-tH_{k})\beta{\bf e}_{1}
$$
as an approximate solution to ${\bf y}(t)$. In this subsection, we aim to evaluate the residual of the approximation efficiently, and provide an effective stopping criterion for the computation of (\ref{eqn111}).

Note that ${\bf y}(t)$ solves the following differential equation
\begin{equation}\label{equ49}
\left\{\begin{array}{l} {\bf y}'(t)=-A{\bf y}(t)- \frac{\ell}{t}{\bf y}(t) +\frac{1}{t(\ell-1)!}{\bf v},\\
{\bf y}(0)={\bf v}/\ell!.
\end{array}
\right.
\end{equation}
Then we can define
\begin{equation}\label{eqn2.10}
{\bf r}_{\ell,k}(t)=-A{\bf y}_{\ell,k}(t)-\frac{\ell}{t}{\bf y}_{\ell,k}(t)+\frac{1}{t(\ell-1)!}{\bf v}-{\bf y}_{\ell,k}'(t)
\end{equation}
as a residual of ${\bf y}_{\ell,k}(t)$. On the other hand, we have from the relation
\begin{equation}\label{eqn2.15}
\varphi_{\ell-1}(-tA){\bf v}=-tA\varphi_{\ell}(-tA){\bf v}+\frac{1}{(\ell-1)!}{\bf v}, \quad \ell\geq 1
\end{equation}
that
\begin{equation}\label{eqn2.12}
\breve{{\bf r}}(t)=-t AV_{k}\varphi_{\ell}(-t H_{k})\beta{\bf e}_{1}-V_{k}\varphi_{\ell-1}(-t H_{k})\beta{\bf e}_{1}+\frac{1}{(\ell-1)!}V_{k}\beta{\bf e}_{1}
\end{equation}
can also be utilized as a residual of ${\bf y}_{\ell,k}(t)$.
The following proposition reveals the relationship between the two residuals (\ref{eqn2.10}) and (\ref{eqn2.12}).
\begin{prop}
Under the above notations, we have
\begin{eqnarray}\label{equ51}
{\bf r}_{\ell,k}(t)=\frac{1}{t}\breve{{\bf r}}(t)
=-h_{k+1,k}\big[{\bf e}_{k}^{\rm H}\varphi_{\ell}(-t H_{k})\beta{\bf e}_{1}\big]{\bf v}_{k+1},
\end{eqnarray}
and
\begin{equation}\label{eqn216}
\|{\bf r}_{\ell,k}(t)\|_2=\Big|h_{k+1,k}\big[{\bf e}_{k}^{\rm H}\varphi_{\ell}(-t H_{k})\beta{\bf e}_{1}\big]\Big|.
\end{equation}
\end{prop}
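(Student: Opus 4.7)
The plan is to prove the three claims in sequence: first the middle equality $\breve{\bf r}(t)=-t h_{k+1,k}\bigl[{\bf e}_k^{\rm H}\varphi_\ell(-tH_k)\beta{\bf e}_1\bigr]{\bf v}_{k+1}$, then the identity ${\bf r}_{\ell,k}(t)=\frac{1}{t}\breve{\bf r}(t)$, and finally the norm formula (\ref{eqn216}). The two underlying tools are the Arnoldi relation (\ref{equ22}) and the scalar/matrix recurrence (\ref{eqn2.15}); the proof is essentially an exercise in cancellation.

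For the middle equality, I would start from the expression (\ref{eqn2.12}) for $\breve{\bf r}(t)$ and apply (\ref{eqn2.15}) with argument $-tH_k$, namely $\varphi_{\ell-1}(-tH_k)=-tH_k\varphi_\ell(-tH_k)+\frac{1}{(\ell-1)!}I$. Substituting into (\ref{eqn2.12}), the constant $\frac{1}{(\ell-1)!}V_k\beta{\bf e}_1$ term cancels against the like term coming from the recurrence, leaving $\breve{\bf r}(t)=-t\bigl(AV_k-V_kH_k\bigr)\varphi_\ell(-tH_k)\beta{\bf e}_1$. A direct application of (\ref{equ22}) gives $AV_k-V_kH_k=h_{k+1,k}{\bf v}_{k+1}{\bf e}_k^{\rm H}$, which immediately produces the claimed closed form.

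For the relation ${\bf r}_{\ell,k}(t)=\frac{1}{t}\breve{\bf r}(t)$ I would compute ${\bf y}_{\ell,k}'(t)$ by differentiating ${\bf y}_{\ell,k}(t)=V_k\varphi_\ell(-tH_k)\beta{\bf e}_1$. The key observation is that the ODE (\ref{equ49}) used to motivate the residual definition was derived entirely from the scalar recurrence and the differentiation identity $\frac{d}{dz}\varphi_\ell(z)=\varphi_\ell(z)-\ell\varphi_{\ell+1}(z)$; neither step uses any specific property of $A$ or ${\bf v}$, so the projected vector ${\bf u}_k(t)=\varphi_\ell(-tH_k)\beta{\bf e}_1$ must satisfy the analogous projected ODE ${\bf u}_k'(t)=-H_k{\bf u}_k(t)-\frac{\ell}{t}{\bf u}_k(t)+\frac{1}{t(\ell-1)!}\beta{\bf e}_1$. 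Multiplying through by $V_k$ and substituting into the definition (\ref{eqn2.10}), the $\frac{\ell}{t}{\bf y}_{\ell,k}(t)$ terms cancel and so does the $\frac{1}{t(\ell-1)!}{\bf v}$ term (using $V_k\beta{\bf e}_1={\bf v}$); what remains is ${\bf r}_{\ell,k}(t)=-\bigl(AV_k-V_kH_k\bigr)\varphi_\ell(-tH_k)\beta{\bf e}_1$, which is $1/t$ times the expression already obtained for $\breve{\bf r}(t)$. The norm identity (\ref{eqn216}) then follows at once because $\|{\bf v}_{k+1}\|_2=1$.

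The main subtlety, and essentially the only non-routine point, is justifying that ${\bf u}_k(t)$ satisfies the projected analog of (\ref{equ49}); once this is in hand, the computation reduces to collecting cancellations and one invocation of (\ref{equ22}). I expect the write-up to be short: one display for the $\breve{\bf r}(t)$ calculation and one for the ${\bf r}_{\ell,k}(t)$ calculation, both of which amount to combining (\ref{equ22}) with (\ref{eqn2.15}).
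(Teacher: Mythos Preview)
Your proposal is correct and follows essentially the same approach as the paper: both arguments rest on the projected ODE for ${\bf u}_k(t)=\varphi_\ell(-tH_k)\beta{\bf e}_1$, the recurrence $\varphi_{\ell-1}(-tH_k)=-tH_k\varphi_\ell(-tH_k)+\frac{1}{(\ell-1)!}I$, and the Arnoldi relation $AV_k-V_kH_k=h_{k+1,k}{\bf v}_{k+1}{\bf e}_k^{\rm H}$. The only difference is ordering: the paper first reduces ${\bf r}_{\ell,k}(t)$ to an expression involving $\varphi_{\ell-1}(-tH_k)$, recognizes it as $\frac{1}{t}\breve{\bf r}(t)$, and then simplifies $\breve{\bf r}(t)$ via the Arnoldi relation, whereas you simplify $\breve{\bf r}(t)$ first and then reduce ${\bf r}_{\ell,k}(t)$ directly to $-(AV_k-V_kH_k)\varphi_\ell(-tH_k)\beta{\bf e}_1$ without passing through $\varphi_{\ell-1}$; the content is the same.
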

\begin{proof}
It follows from (\ref{equ49}) that
$$
\varphi_{\ell}'(-t H_{k})\beta{\bf e}_{1}=-H_{k}\varphi_{\ell}(-t H_{k})\beta{\bf
e}_{1}-\frac{\ell}{t}\varphi_{\ell}(-t H_{k})\beta{\bf
e}_{1}+\frac{1}{t(\ell-1)!}\beta{\bf
e}_{1}.
$$
Thus,
\begin{eqnarray}\label{equ52}
{\bf r}_{\ell,k}(t)
&=& -A {\bf y}_{\ell,k}(t)-\frac{\ell}{t}{\bf y}_{\ell,k}(t)+\frac{1}{t(\ell-1)!}{\bf v}-{\bf y}_{\ell,k}'(t)\nonumber\\
&=& -AV_{k}\varphi_{\ell}(-t H_{k})\beta{\bf e}_{1}-\frac{\ell}{t}V_{k}\varphi_{\ell}(-t H_{k})\beta{\bf e}_{1}-V_{k}\varphi_{\ell}'(-t H_{k})\beta{\bf e}_{1}+\frac{1}{t(\ell-1)!}{\bf v} \nonumber\\
&=& -AV_{k}\varphi_{\ell}(-t H_{k})\beta{\bf e}_{1}+V_{k}\Big[H_{k}\varphi_{\ell}(-t H_{k})\beta{\bf e}_{1}-\frac{1}{t(\ell-1)!}\beta{\bf e}_{1}\Big]+\frac{1}{t(\ell-1)!}{\bf v}.
\end{eqnarray}
Notice from (\ref{eqn11}) that
$$
\varphi_{\ell-1}(-tH_k)\beta{\bf e}_1=-tH_k\varphi_{\ell}(-tH_k)\beta{\bf e}_1+\frac{1}{(\ell-1)!}\beta{\bf e}_1,\quad \ell=1,2,\ldots
$$
So we have
\begin{equation}\label{wu28}
H_{k}\varphi_{\ell}(-t H_{k})\beta{\bf e}_{1}-\frac{1}{t(\ell-1)!}\beta{\bf e}_{1}=-\frac{1}{t}\varphi_{\ell-1}(-t H_{k})\beta{\bf e}_{1},
\end{equation}
and (\ref{equ52}) can be rewritten as
\begin{equation}\label{equ53}
{\bf r}_{\ell,k}(t)=-AV_{k}\varphi_{\ell}(-t H_{k})\beta{\bf e}_{1}-\frac{1}{t}V_{k}\varphi_{\ell-1}(-t H_{k})\beta{\bf e}_{1}+\frac{1}{t(\ell-1)!}V_{k}\beta{\bf e}_{1}.
\end{equation}
On the other hand,
\begin{eqnarray*}
\breve{{\bf r}}(t)
&=& -t AV_{k}\varphi_{\ell}(-t H_{k})\beta{\bf e}_{1}-V_{k}\varphi_{\ell-1}(-t H_{k})\beta{\bf e}_{1}+\frac{1}{(\ell-1)!}V_{k}\beta{\bf e}_{1}\nonumber\\
&=& t\Big[-AV_{k}\varphi_{\ell}(-t H_{k})\beta{\bf e}_{1}-\frac{1}{t}V_{k}\varphi_{\ell-1}(-t H_{k})\beta{\bf e}_{1}+\frac{1}{t(\ell-1)!}V_{k}\beta{\bf e}_{1}\Big] \nonumber\\
&=& t\cdot {\bf r}_{\ell,k}(t).
\end{eqnarray*}
Moreover, we have from (\ref{equ22}) that
\begin{eqnarray*}
\breve{{\bf r}}(t)
&=& -t A V_{k}\varphi_{\ell}(-t H_{k})\beta{\bf e}_{1}-V_{k}\varphi_{\ell-1}(-t H_{k})\beta{\bf e}_{1}+\frac{1}{(\ell-1)!}V_{k}\beta{\bf e}_{1}\nonumber\\
&=& -t\big(V_{k}H_{k}+h_{k+1,k}{\bf v}_{k+1}{\bf e}_{k}^{\rm H}\big)\varphi_{\ell}(-t H_{k})\beta{\bf e}_{1}-V_{k}\varphi_{\ell-1}(-t H_{k})\beta{\bf e}_{1}+\frac{1}{(\ell-1)!}V_{k}\beta{\bf e}_{1}\nonumber \\
&=& V_{k}\Big[-t H_{k}\varphi_{\ell}(-t H_{k})\beta{\bf e}_{1}-\varphi_{\ell-1}(-t H_{k})\beta{\bf e}_{1}+\frac{1}{(\ell-1)!}\beta{\bf e}_{1}\Big]-t h_{k+1,k}[{\bf e}_{k}^{\rm H}\varphi_{\ell}(-t H_{k})\beta{\bf e}_{1}]{\bf v}_{k+1}\nonumber\\
&=& -t h_{k+1,k}\big[{\bf e}_{k}^{\rm H}\varphi_{\ell}(-t H_{k})\beta{\bf e}_{1}\big]{\bf v}_{k+1},
\end{eqnarray*}
where we used (\ref{wu28}); and (\ref{eqn216}) follows from (\ref{equ51}) and the fact that $\|{\bf v}_{k+1}\|_2=1$.
\end{proof}

\begin{rem}
Proposition 2.1 indicates that the residual vector ${\bf r}_{\ell,k}(t)$ is colinear with the $(k+1)$-th Arnoldi basis vector ${\bf v}_{k+1}$, which is orthogonal to the search space ${\rm span}\{V_{k}\}$, i.e.,
\begin{equation}\label{eqn2.18}
\left\{\begin{array}{c}
{\bf y}_{\ell,k}(t)=V_{k}\varphi_{\ell}(-tH_{k})\beta{\bf e}_{1}~\in~{\rm span}\{V_{k}\},\\
-A{\bf y}_{\ell,k}(t)-\frac{\ell}{t}{\bf y}_{\ell,k}(t)+\frac{1}{t(\ell-1)!}{\bf v}-{\bf y}_{\ell,k}'(t)~\bot~{\rm span}\{V_{k}\}.
\end{array}\right.
\end{equation}
In terms of {\rm(}\ref{eqn2.5}{\rm)} and {\rm(}\ref{eqn2.18}{\rm)}, the Arnoldi method for $\varphi$-functions is an orthogonal projection method.
\end{rem}

Now we consider the shift-and-invert Arnoldi method for $\varphi_{\ell}$ functions with $\ell\geq 1$. Given the shift-and-invert Arnoldi relation
(\ref{equ25}), the shift-and-invert Arnoldi method exploits \cite{Rag}
$$
\widetilde{\bf y}_{\ell,k}(t)=\widetilde{V}_{k}\varphi_{\ell}(-t B_{k})\beta{\bf e}_{1}
$$
as an approximation to ${\bf y}(t)=\varphi_{\ell}(-tA){\bf v}$, where $B_{k}=\frac{\widetilde{H}_{k}^{-1}-I}{\gamma}$. Define
\begin{equation}
\widetilde{{\bf r}}_{\ell,k}(t)=-A\widetilde{\bf y}_{\ell,k}(t)-\frac{\ell}{t}\widetilde{\bf y}_{\ell,k}(t)+\frac{1}{t(\ell-1)!}{\bf v}-\widetilde{\bf y}_{\ell,k}'(t),
\end{equation}
we have the following result on the residual.
\begin{prop}
Under the above notations, there holds
\begin{equation}\label{equ5-5}
\widetilde{{\bf r}}_{\ell,k}(t)
=\frac{\widetilde{h}_{k+1,k}}{\gamma}\Big[{\bf e}_{k}^{\rm H}\widetilde{H}_{k}^{-1}\varphi_{\ell}(-t B_{k})\beta {\bf e}_{1}\Big](I+\gamma A)\widetilde{\bf v}_{k+1},
\end{equation}
and
$$
\|\widetilde{{\bf r}}_{\ell,k}(t)\|_2=\Big|\frac{\widetilde{h}_{k+1,k}}{\gamma}\Big[{\bf e}_{k}^{\rm H}\widetilde{H}_{k}^{-1}\varphi_{\ell}(-t B_{k})\beta {\bf e}_{1}\Big]\Big|\cdot\|\widetilde{\bf v}_{k+1}+\gamma A\widetilde{\bf v}_{k+1}\|_2.
$$
\end{prop}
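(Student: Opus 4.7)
The plan is to mirror the proof of Proposition 2.1, substituting the shift-and-invert Arnoldi relation (2.8) in place of (2.3) and using the observation that $\varphi_{\ell}(-tB_k)\beta{\bf e}_1$ satisfies the analogue of the ODE (2.10) with $A$ replaced by $B_k$ and ${\bf v}$ replaced by $\beta{\bf e}_1$. The three key ingredients are thus: the ODE (2.10); the rewritten shift-and-invert relation (2.8), which expresses $A\widetilde{V}_k$ as $\widetilde{V}_kB_k$ plus a rank-one correction in the direction $(I+\gamma A)\widetilde{\bf v}_{k+1}$; and the identity ${\bf v}=\beta\widetilde{\bf v}_1=\widetilde{V}_k\beta{\bf e}_1$, valid because $\widetilde{\bf v}_1={\bf v}/\|{\bf v}\|_2$ is the first column of $\widetilde{V}_k$.

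First I would differentiate $\widetilde{\bf y}_{\ell,k}(t)=\widetilde{V}_k\varphi_{\ell}(-tB_k)\beta{\bf e}_1$ via the identity
$$\varphi_{\ell}'(-tB_k)\beta{\bf e}_1=-B_k\varphi_{\ell}(-tB_k)\beta{\bf e}_1-\frac{\ell}{t}\varphi_{\ell}(-tB_k)\beta{\bf e}_1+\frac{1}{t(\ell-1)!}\beta{\bf e}_1,$$
which is the $B_k$ version of the scalar identity behind (2.16) and follows directly from the recurrence (1.2). Substituting into the definition (2.19) of $\widetilde{\bf r}_{\ell,k}(t)$ makes the $\frac{\ell}{t}$-terms cancel, and since ${\bf v}=\widetilde{V}_k\beta{\bf e}_1$ the $\frac{1}{t(\ell-1)!}$-terms cancel as well, leaving
$$\widetilde{\bf r}_{\ell,k}(t)=\big(-A\widetilde{V}_k+\widetilde{V}_kB_k\big)\varphi_{\ell}(-tB_k)\beta{\bf e}_1.$$

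Finally, I would invoke (2.8) in the form $-A\widetilde{V}_k+\widetilde{V}_kB_k=\frac{\widetilde{h}_{k+1,k}}{\gamma}(I+\gamma A)\widetilde{\bf v}_{k+1}{\bf e}_k^{\rm H}\widetilde{H}_k^{-1}$, and pulling the resulting scalar ${\bf e}_k^{\rm H}\widetilde{H}_k^{-1}\varphi_{\ell}(-tB_k)\beta{\bf e}_1$ in front of the vector $(I+\gamma A)\widetilde{\bf v}_{k+1}$ yields exactly (2.20); the norm identity then follows by taking $\|\cdot\|_2$ of both sides and factoring the scalar. No step looks like a serious obstacle --- the argument is a direct shift-and-invert analog of Proposition 2.1 --- and the only bookkeeping point is verifying that the two inhomogeneous $\frac{1}{t(\ell-1)!}$-terms in (2.19) cancel, which is precisely what the identity ${\bf v}=\widetilde{V}_k\beta{\bf e}_1$ ensures.
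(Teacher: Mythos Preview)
Your proposal is correct and follows essentially the same route as the paper. The only organizational difference is that the paper first passes to $t\cdot\widetilde{\bf r}_{\ell,k}(t)$ in the ``$\breve{\bf r}$''-form $-tA\widetilde V_k\varphi_\ell(-tB_k)\beta{\bf e}_1-\widetilde V_k\varphi_{\ell-1}(-tB_k)\beta{\bf e}_1+\frac{1}{(\ell-1)!}\widetilde V_k\beta{\bf e}_1$ and then uses the recurrence (1.2) to kill the $\widetilde V_k[\cdots]$ block, whereas you use the equivalent derivative identity for $\varphi_\ell(-tB_k)\beta{\bf e}_1$ and cancel the $\frac{\ell}{t}$- and $\frac{1}{t(\ell-1)!}$-terms directly; the remaining application of the rewritten relation $-A\widetilde V_k+\widetilde V_kB_k=\frac{\widetilde h_{k+1,k}}{\gamma}(I+\gamma A)\widetilde{\bf v}_{k+1}{\bf e}_k^{\rm H}\widetilde H_k^{-1}$ is identical.
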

\begin{proof}
It follows from the shift-and-invert Arnoldi relation and (\ref{eqn11}) that
\begin{eqnarray*}
t\cdot\widetilde{{\bf r}}_{\ell,k}(t)
&=& -tA\widetilde{V}_{k}\varphi_{\ell}(-t B_{k})\beta{\bf e}_{1}-\widetilde{V}_{k}\varphi_{\ell-1}(-t B_{k})\beta{\bf e}_{1}+\frac{1}{(\ell-1)!}\widetilde{V}_{k}\beta{\bf e}_{1}\nonumber\\
&=& -t\big[\widetilde{V}_{k}B_{k}-\frac{\widetilde{h}_{k+1,k}}{\gamma}(I+\gamma A)\widetilde{\bf v}_{k+1}{\bf e}_{k}^{\rm H}\widetilde{H}_{k}^{-1}\big]\varphi_{\ell}(-t B_{k})\beta{\bf e}_{1}-\widetilde{V}_{k}\varphi_{\ell-1}(-t B_{k})\beta{\bf e}_{1}+\frac{1}{(\ell-1)!} \widetilde{V}_{k}\beta{\bf e}_{1}\nonumber \\
&=& \widetilde{V}_{k}\Big[-t B_{k}\varphi_{\ell}(-t B_{k})\beta{\bf e}_{1}-\varphi_{\ell-1}(-t B_{k})\beta{\bf e}_{1}+\frac{1}{(\ell-1)!}\beta{\bf e}_{1}\Big]\\
&&+t\frac{\widetilde{h}_{k+1,k}}{\gamma}\big[{\bf e}_{k}^{\rm H}\widetilde{H}_{k}^{-1}\varphi_{\ell}(-t B_{k})\beta{\bf e}_{1}\big](I+\gamma A)\widetilde{\bf v}_{k+1}\nonumber\\
&=& t\frac{\widetilde{h}_{k+1,k}}{\gamma}\Big[{\bf e}_{k}^{\rm H}\widetilde{H}_{k}^{-1}\varphi_{\ell}(-t B_{k})\beta{\bf e}_{1}\Big](I+\gamma A)\widetilde{\bf v}_{k+1},
\end{eqnarray*}
and
\begin{eqnarray*}
\|\widetilde{{\bf r}}_{\ell,k}(t)\|_2&=&\Big\|\frac{\widetilde{h}_{k+1,k}}{\gamma}\Big[{\bf e}_{k}^{\rm H}\widetilde{H}_{k}^{-1}\varphi_{\ell}(-t B_{k})\beta{\bf e}_{1}\Big](I+\gamma A)\widetilde{\bf v}_{k+1}\Big\|_2\\
&=&\Big|\frac{\widetilde{h}_{k+1,k}}{\gamma}\Big[{\bf e}_{k}^{\rm H}\widetilde{H}_{k}^{-1}\varphi_{\ell}(-t B_{k})\beta {\bf e}_{1}\Big]\Big|\cdot\|\widetilde{\bf v}_{k+1}+\gamma A\widetilde{\bf v}_{k+1}\|_2.
\end{eqnarray*}
\end{proof}

\begin{rem}
Proposition 2.2 shows that
the residual vector $\widetilde{{\bf r}}_{\ell,k}(t)$ is colinear with $(I+\gamma A)\widetilde{\bf v}_{k+1}$, which is orthogonal to ${\rm span}\{(I+\gamma A)^{\rm -H}\widetilde{V}_{k}\}$, i.e.,
\begin{equation}\label{eqn221}
\left\{\begin{array}{c}\widetilde{\bf y}_{\ell,k}(t)=\widetilde{V}_{k}\varphi_{\ell}(-t B_{k})\beta{\bf e}_{1}~\in~ {\rm span}\{\widetilde{V}_{k}\},\\
-A\widetilde{\bf y}_{\ell,k}(t)-\frac{\ell}{t}\widetilde{\bf y}_{\ell,k}(t)+\frac{1}{t(\ell-1)!}{\bf v}-\widetilde{\bf y}_{\ell,k}'(t)~\bot~{\rm span}\{(I+\gamma A)^{\rm -H}\widetilde{V}_{k}\}.
\end{array}\right.
\end{equation}
In view of {\rm(}\ref{eqn27}{\rm)} and {\rm(}\ref{eqn221}{\rm)},
the shift-and-invert Arnoldi method for $\varphi$-functions is an oblique projection method. In this method, however, one has to compute $(I+\gamma A)^{-1}$ in advance, or to solve a shifted linear system in each step of the shift-and-invert Arnoldi process, which is prohibitive for large scale matrices.
\end{rem}

\section{A harmonic Arnoldi method and a thick-restarted harmonic Arnoldi algorithm for $\varphi$-functions}

\setcounter{equation}{0}

In order to accelerate convergence of the standard Arnoldi method, it is preferable to deflate some eigenvalues near a singularity of the function in question \cite{EEG,Frommer,Harmonic}.
For instance, one often needs to deflate some eigenvalues close to a given shift
(e.g., the smallest eigenvalues in magnitude \cite{EEG,Frommer}), so that the convergence speed can be improved significantly. It is well known that the harmonic Arnoldi method is appropriate to interior eigenproblems \cite{Paige,Stewart}.
In \cite{Harmonic}, Hochbruck and Hochstenbach reviewed three different derivations of a harmonic Ritz approach for matrix functions: (i)
using a projection onto the search space; (ii)
approximating the shifted linear systems in the Dunford--Taylor integral representation;
(iii) interpolating the function in certain points.

In this section, we introduce the harmonic Arnoldi method for $\varphi$-functions based on
the residual and the harmonic projection technique for large eigenproblems \cite{Paige,Saad}, and shed light on the relationship between the harmonic Arnoldi method and the standard Arnoldi method. Furthermore, we consider how to restart the harmonic Arnoldi method efficiently, by using the thick-restarting strategy that is popular for large scale eigenproblems and linear equations \cite{JW,Morgan,WS}.
Thanks to the residuals of the harmonic Arnoldi approximations,
our new algorithm can solve (\ref{eqn1.3}) simultaneously in the same search subspace.

\subsection{The harmonic Arnoldi method for matrix exponential}

The Arnoldi relation (\ref{equ22}) can be rewritten as
\begin{equation}\label{equ28}
(I+\gamma A)V_{k}=V_{k}(I+\gamma H_{k})+\gamma h_{k+1,k}{\bf v}_{k+1}{\bf e}_{k}^{\rm H},
\end{equation}
where $\gamma$ is a user-prescribed parameter.
Suppose that we want to seek an approximation $\check{\bf y}_{k}(t)\equiv V_{k}\check{\bf u}_{k}(t)$ to ${\rm exp}\big[-t(I+\gamma A)^{-1}\big)]{\bf v}$ in the subspace ${\rm span}\{V_k\}$, and the residual is \cite{Residual}
$$
\check{\bf r}_{k}(t)=-(I+\gamma A)^{-1}\check{\bf y}_{k}(t)-\check{\bf y}'_{k}(t).
$$

Now we consider how to compute $\check{\bf u}_{k}(t)$. Motivated by the harmonic Arnoldi method for interior eigenvalue problems \cite{Paige,Stewart}, let
$$
\check{\bf r}_{k}(t)\bot {\rm span}\{(I+\gamma A)^{\rm H}(I+\gamma A) V_{k}\},
$$
that is,
\begin{equation}\label{eqn3.1}
V_{k}^{\rm H}(I+\gamma A)^{\rm H}(I+\gamma A)\Big[-(I+\gamma
A)^{-1}V_{k}\check{\bf u}_{k}(t)-V_{k}\check{\bf
u}_{k}'(t)\Big]={\bf 0}.
\end{equation}
If $M_{k}\equiv(I+\gamma H_{k})^{\rm H}(I+\gamma H_{k})+\gamma^{2} h_{k+1,k}^{2} {\bf e}_{k}{\bf e}_{k}^{\rm H}$ is nonsingular, denote $Q_{k}=M_{k}^{-1}(I+\gamma H_{k})^{\rm H}$, then we obtain from (\ref{eqn3.1}) that
$$
\check{\bf u}_{k}'(t)=-Q_{k} \check{\bf u}_{k}(t), \quad \check{\bf u}_{k}(0)=\beta {\bf e}_{1},
$$
and
$$
\check{\bf u}_{k}(t)= {\rm exp}(-t Q_{k})\beta{\bf e}_{1}.
$$
Thus, we can approximate ${\rm exp}\big[-t(I+\gamma A)^{-1}\big)]{\bf v}$ by using
\begin{equation}\label{equa3.2}
\check{\bf y}_{k}(t)=V_{k}\check{\bf u}_{k}(t)= V_{k} {\rm exp}(-t Q_{k})\beta{\bf e}_{1}.
\end{equation}

Denote
\begin{equation}\label{eqn3.3}
T_{k}=\frac{Q_{k}^{-1}-I}{\gamma}=H_{k}+\gamma h_{k+1,k}^{2}(I+\gamma H_{k})^{\rm -H}{\bf e}_{k}{\bf e}_{k}^{\rm H},
\end{equation}
in view of (\ref{equa3.2}), the idea behind the harmonic Arnoldi method is to use
\begin{equation}\label{eqn3.4}
\widehat{\bf y}_{k}(t)=V_{k}\widehat{\bf u}_{k}(t)=V_{k} {\rm exp}(-tT_{k})\beta{\bf e}_{1}
\end{equation}
as an approximation to ${\bf y}(t)={\rm exp}(-tA){\bf v}$, where $\widehat{\bf u}_{k}(t)={\rm exp}(-tT_{k})\beta{\bf e}_{1}$. The residual is
\begin{eqnarray}\label{eqn3.5}
\widehat{\bf r}_{k}(t)
&=& -A\widehat{\bf y}_{k}(t)-\widehat{\bf y}_{k}'(t) \nonumber\\
&=& \Big[\gamma h_{k+1,k}^{2}V_{k}(I+\gamma H_{k})^{\rm -H}{\bf e}_{k}{\bf e}_{k}^{\rm H}-h_{k+1,k}{\bf v}_{k+1}{\bf e}_{k}^{\rm H}\Big]{\rm exp}(-t T_{k})\beta {\bf e}_{1}\label{eqn3.6} \\
&=& V_{k+1}\left[\begin{array}{c} \gamma h_{k+1,k}^{2}(I+\gamma H_{k})^{\rm -H}{\bf e}_{k}\nonumber\\
-h_{k+1,k}
\end{array}\right]\Big[{\bf e}_{k}^{\rm H}{\rm exp}(-t T_{k})\beta {\bf e}_{1}\Big],
\end{eqnarray}
and
%\begin{equation}\label{eqn3.7}
$$
\|\widehat{\bf r}_{k}(t)\|_2=\Bigg\| \left[\begin{array}{c}  \gamma h_{k+1,k}^{2}(I+\gamma H_{k})^{-\rm H}{\bf e}_{k}\\
-h_{k+1,k}
\end{array}\right]\Bigg\|_2 \cdot \big|{\bf e}_{k}^{\rm H}{\rm exp}(-t T_{k})\beta {\bf e}_{1}\big|.
$$
%\end{equation}
Thus, the residual vector $\widehat{\bf r}_{k}(t)$ is colinear with the residual of the harmonic Ritz pairs \cite{Paige,Stewart}, and
the harmonic Arnoldi method for matrix exponential is an oblique projection method in the sense that
\begin{equation}\label{eqn377}
\left\{\begin{array}{c}
\widehat{\bf y}_{k}(t)=V_{k}{\rm exp}(-tT_{k})\beta{\bf e}_{1}~\in~{\rm span}\{V_{k}\},\\
-A\widehat{\bf y}_{k}(t)-\widehat{\bf y}_{k}'(t)~\bot~{\rm span}\{(I+\gamma A)V_{k}\}.
\end{array}\right.
\end{equation}
The method is an alternative Krylov subspace approach to the Arnoldi method. Indeed, the Arnoldi approximation ${\bf y}_k(t)$ can be
characterized as ${\bf y}_k(t)=q_{H_k}(A){\bf v}$, where
$q_{H_k}$ interpolates the exponential function at the eigenvalues of $H_k$, which are the Ritz values
of $A$ with respect to the Krylov subspace $\mathcal{K}_k(A,{\bf v})$ \cite{Saad}. Alternatively, the harmonic Arnoldi approximation $\widehat{\bf y}_k(t)$
is based on polynomial interpolation at the harmonic Ritz values instead
of the standard Ritz values \cite{Frommer}.

To our best knowledge, however, the relationship between the harmonic Arnoldi method and the standard Arnoldi method is still unknown.
The following theorem establishes the relationship of the approximations and residuals of the two approaches for matrix exponential.
\begin{theorem}
Denote by ${\bf y}_{k}(t)=V_{k} {\rm exp}(-t H_{k})\beta{\bf e}_{1}$ and $ \widehat{\bf y}_{k}(t)=V_{k}{\rm exp}(-t T_{k})\beta{\bf e}_{1}$ the Arnoldi approximation and the harmonic Arnoldi approximation to ${\bf y}(t)$, respectively; and by ${\bf r}_{k}(t)$ and $\widehat{\bf r}_{k}(t)$ the corresponding residuals. Let
${\bf g}=\gamma h_{k+1,k}^{2}(I+\gamma H_{k})^{\rm -H}{\bf e}_{k}$ and $k\geq 2$,
then we have
\begin{equation}
\widehat{\bf y}_{k}(t)={\bf y}_{k}(t)+
\beta V_{k}\sum _{m=2}^{\infty} \bigg(\Big[\frac{(-t)^{m}}{m!}\Big]\sum _{p=1}^{m-1} \big[{\bf e}_{k}^{\rm H}(T_{k})^{p}{\bf e}_{1}\big]\big[(H_{k})^{m-1-p}{\bf g}\big]\bigg),
\end{equation}
and
\begin{equation}\label{eqn3.9}
\widehat{\bf r}_{k}(t)={\bf r}_{k}(t)-(h_{k+1,k}\alpha) {\bf v}_{k+1}+\gamma h_{k+1,k}^{2}\Big[{\bf e}_{k}^{\rm H}{\rm
exp}(-tH_{k})\beta {\bf e}_{1}+ \alpha \Big] V_{k} (I+\gamma
H_{k})^{\rm -H} {\bf e}_{k},
\end{equation}
where
\begin{equation*}
\alpha=
\beta \sum _{m=2}^{\infty}
\Big(\Big[\frac{(-t)^{m}}{m!}\Big]\sum _{p=1}^{m-1} \big[{\bf
e}_{k}^{\rm H}(T_{k})^{p}{\bf e}_{1}\big]\big[{\bf e}_{k}^{\rm H}(H_{k})^{m-1-p}{\bf g}\big]\Big).
\end{equation*}
\end{theorem}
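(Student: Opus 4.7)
The plan is to exploit the rank-one relation $T_k = H_k + \mathbf{g}\mathbf{e}_k^{\rm H}$, where $\mathbf{g}=\gamma h_{k+1,k}^{2}(I+\gamma H_{k})^{-\rm H}\mathbf{e}_{k}$, which is the key identity hidden in the definition (3.3). Because the perturbation acts only on the $k$-th entry, the propagation of powers $T_k^m\mathbf{e}_1$ splits into a ``classical'' part $H_k^m\mathbf{e}_1$ plus a corrector of rank one, and this is what I would track.

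First I would set $\mathbf{x}_m = T_k^m\mathbf{e}_1$ and $c_m = \mathbf{e}_k^{\rm H}\mathbf{x}_m = \mathbf{e}_k^{\rm H}T_k^{m}\mathbf{e}_1$. The recursion $\mathbf{x}_{m+1} = H_k\mathbf{x}_m + c_m\mathbf{g}$ holds for all $m\geq 0$. The hypothesis $k\geq 2$ enters here to kill the initial correction: $c_0 = \mathbf{e}_k^{\rm H}\mathbf{e}_1=0$, so $\mathbf{x}_1=H_k\mathbf{e}_1$. A short induction then yields, for $m\geq 2$,
\begin{equation*}
T_k^m\mathbf{e}_1 = H_k^m\mathbf{e}_1 + \sum_{p=1}^{m-1}\bigl[\mathbf{e}_k^{\rm H}T_k^{p}\mathbf{e}_1\bigr]\,H_k^{m-1-p}\mathbf{g}.
\end{equation*}
Multiplying by $\beta(-t)^m/m!$, summing $m\geq 0$, and pre-multiplying by $V_k$ immediately gives the desired expansion of $\widehat{\mathbf{y}}_k(t)-\mathbf{y}_k(t)$. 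The $m=0,1$ contributions collapse to $\mathbf{y}_k(t)$ because $T_k^m\mathbf{e}_1=H_k^m\mathbf{e}_1$ there.

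For the residual identity (3.9), I would first write
\begin{equation*}
\widehat{\mathbf{r}}_k(t) = (V_kT_k - AV_k)\exp(-tT_k)\beta\mathbf{e}_1.
\end{equation*}
Using the Arnoldi relation (2.1) and the rank-one decomposition of $T_k$, the bracket simplifies to $(V_k\mathbf{g}-h_{k+1,k}\mathbf{v}_{k+1})\mathbf{e}_k^{\rm H}$, reproducing (3.6). Then I would apply the scalar version of the previous expansion, namely
\begin{equation*}
\mathbf{e}_k^{\rm H}\exp(-tT_k)\beta\mathbf{e}_1 = \mathbf{e}_k^{\rm H}\exp(-tH_k)\beta\mathbf{e}_1 + \alpha,
\end{equation*}
which is exactly the coefficient $\alpha$ defined in the theorem. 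Substituting, separating the $\mathbf{r}_k(t)=-h_{k+1,k}[\mathbf{e}_k^{\rm H}\exp(-tH_k)\beta\mathbf{e}_1]\mathbf{v}_{k+1}$ piece, grouping the remaining $\mathbf{v}_{k+1}$-term ($-h_{k+1,k}\alpha\mathbf{v}_{k+1}$) and the remaining $V_k\mathbf{g}$-term, and finally restoring $\mathbf{g}=\gamma h_{k+1,k}^{2}(I+\gamma H_k)^{-\rm H}\mathbf{e}_k$, gives (3.9) after collecting.

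The calculation is essentially algebraic and the only subtle point is the bookkeeping in the $T_k^m\mathbf{e}_1$ recursion; the main obstacle, and the reason the hypothesis $k\geq 2$ is needed, is ensuring that the correction sum starts at $p=1$ rather than $p=0$, i.e.\ that the $\mathbf{e}_1$ vector is initially orthogonal to $\mathbf{e}_k$. Once that base case is pinned down, the double sum indexed by $m\geq 2,\ 1\leq p\leq m-1$ falls out of the induction and both identities follow by termwise rearrangement of absolutely convergent series.
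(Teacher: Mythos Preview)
Your proposal is correct and follows essentially the same approach as the paper's proof: both arguments hinge on the rank-one identity $T_k=H_k+\mathbf{g}\mathbf{e}_k^{\rm H}$, establish the key formula $T_k^m\mathbf{e}_1-H_k^m\mathbf{e}_1=\sum_{p=1}^{m-1}[\mathbf{e}_k^{\rm H}T_k^{p}\mathbf{e}_1]\,H_k^{m-1-p}\mathbf{g}$ by induction (using $k\geq 2$ so that $\mathbf{e}_k^{\rm H}\mathbf{e}_1=0$), and then read off both the approximation and residual identities. Your recursion $\mathbf{x}_{m+1}=H_k\mathbf{x}_m+c_m\mathbf{g}$ organizes the induction a bit more cleanly than the paper, which instead multiplies both sides of the hypothesis by $T_k$ and rearranges, but the substance is identical.
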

\begin{proof}
By the definition of matrix exponential, we have
$$
{\bf u}_{k}(t)={\rm exp}(-t H_{k})\beta{\bf e}_{1}=\beta  \sum _{m=0}^{\infty}\Big[\frac{(-t H_{k})^{m}}{m!}\Big]{\bf e}_{1},
$$
$$
\widehat{\bf u}_{k}(t)={\rm exp}(-t T_{k})\beta{\bf e}_{1}=\beta \sum _{m=0}^{\infty}\Big[\frac{(-t T_{k})^{m}}{m!}\Big]{\bf e}_{1},
$$
and
\begin{equation}\label{equ32}
\widehat{\bf u}_{k}(t)-{\bf u}_{k}(t)=\beta \sum _{m=0}^{\infty} \Big[\frac{(-t)^{m}}{m!}\Big]\Big[(T_{k})^{m}{\bf e}_{1}-(H_{k})^{m}{\bf e}_{1}\Big].
\end{equation}
Furthermore, for $k\geq 2$, we have from $T_{k}{\bf e}_{1}=\big[H_{k}+\gamma h_{k+1,k}^{2}(I+\gamma
H_{k})^{\rm -H}{\bf e}_{k}{\bf e}_{k}^{\rm H}\big]{\bf e}_{1}=H_{k}{\bf e}_{1}$ that
\begin{eqnarray*}
\sum _{m=0}^{\infty} \Big[\frac{(-t)^{m}}{m!}\Big]\Big[(T_{k})^{m}{\bf e}_{1}-(H_{k})^{m}{\bf e}_{1}\Big]&=&\sum _{m=1}^{\infty} \Big[\frac{(-t)^{m}}{m!}\Big]\Big[(T_{k})^{m}{\bf e}_{1}-(H_{k})^{m}{\bf e}_{1}\Big]\\
&=&\sum _{m=2}^{\infty} \Big[\frac{(-t)^{m}}{m!}\Big]\Big[(T_{k})^{m}{\bf e}_{1}-(H_{k})^{m}{\bf e}_{1}\Big].
\end{eqnarray*}
Thus,
\begin{eqnarray}\label{equ3-3}
% \nonumber to remove numbering (before each equation)
\widehat{\bf y}_{k}(t)-{\bf y}_{k}(t)
&=&V_{k}\widehat{\bf u}_{k}(t)-V_{k}{\bf u}_{k}(t) \nonumber\\
&=&\beta V_{k} \sum _{m=2}^{\infty} \Big[\frac{(-t)^{m}}{m!}\Big]\Big[(T_{k})^{m}{\bf e}_{1}-(H_{k})^{m}{\bf e}_{1}\Big].
\end{eqnarray}
%In the following we use induction to get the relation of $(T_{k})^{m}{\bf e}_{1}-(H_{k})^{m}{\bf e}_{1}$.
For notation simplicity, we denote
$$
T_{k}=H_{k}+\gamma h_{k+1,k}^{2}(I+\gamma
H_{k})^{\rm -H}{\bf e}_{k}{\bf e}_{k}^{\rm H}\equiv H_{k}+{\bf g}{\bf
e}_{k}^{\rm H},
$$
where ${\bf g}=\gamma h_{k+1,k}^{2}(I+\gamma H_{k})^{\rm -H}{\bf
e}_{k}$.
Note that $T_k$ is a rank-one update of $H_k$.
Next we verify that
\begin{equation}\label{36}
(T_{k})^{m}{\bf
e}_{1}-(H_{k})^{m}{\bf e}_{1}= \sum _{p=1}^{m-1} \big[{\bf
e}_{k}^{\rm H}(T_{k})^{p}{\bf e}_{1}\big]\big[(H_{k})^{m-1-p}{\bf g}\big].
\end{equation}
Indeed, we note that
$$
(T_{k})^{2}{\bf e}_{1}-(H_{k})^{2}{\bf e}_{1}
=[{\bf e}_{k}^{\rm H}T_{k}{\bf e}_{1}](H_{k})^{0}{\bf g}.
$$
Assume that
\begin{equation}\label{equ33}
(T_{k})^{j}{\bf e}_{1}-(H_{k})^{j}{\bf e}_{1}= \sum _{p=1}^{j-1} \big[{\bf e}_{k}^{\rm H}(T_{k})^{p}{\bf
e}_{1}\big]\big[(H_{k})^{j-1-p}{\bf g}\big],\quad j\geq 2.
\end{equation}
Thus,
\begin{equation*}
T_{k}\Big[(T_{k})^{j}{\bf e}_{1}-(H_{k})^{j}{\bf e}_{1}\Big]
=(T_{k})^{j+1}{\bf e}_{1}-(H_{k})^{j+1}{\bf e}_{1}-\Big[{\bf e}_{k}^{\rm H}(H_{k})^{j}{\bf e}_{1}\Big]{\bf g}.
\end{equation*}
By (\ref{equ33}), we obtain
$$
T_{k}\Big[(T_{k})^{j}{\bf e}_{1}-(H_{k})^{j}{\bf e}_{1}\Big]
=T_{k} \sum _{p=1}^{j-1} \big[{\bf e}_{k}^{\rm H}(T_{k})^{p}{\bf
e}_{1}\big]\big[(H_{k})^{j-1-p}{\bf g}\big],
$$
and
\begin{equation}\label{equ35}
(T_{k})^{j+1}{\bf e}_{1}-(H_{k})^{j+1}{\bf e}_{1} =\sum _{p=1}^{j-1}
\Big[{\bf e}_{k}^{\rm H}(T_{k})^{p}{\bf e}_{1}\Big]
\Big[(H_{k})^{j-p}{\bf g}\Big]+ \Big[ \sum _{p=1}^{j-1} \big[{\bf
e}_{k}^{\rm H}(T_{k})^{p}{\bf e}_{1}\big]\big[{\bf
e}_{k}^{\rm H}(H_{k})^{j-1-p}{\bf g}\big]+{\bf e}_{k}^{\rm H}(H_{k})^{j}{\bf
e}_{1} \Big]{\bf g}.
\end{equation}
Moreover, it follows from (\ref{equ33}) that
$$
{\bf e}_{k}^{\rm H}(T_{k})^{j}{\bf e}_{1}= \sum _{p=1}^{j-1} \big[{\bf
e}_{k}^{\rm H}(T_{k})^{p}{\bf e}_{1}\big]\big[{\bf
e}_{k}^{\rm H}(H_{k})^{j-1-p}{\bf g}\big]+{\bf e}_{k}^{\rm H}(H_{k})^{j}{\bf
e}_{1},
$$
so we arrive at
\begin{eqnarray*}
% \nonumber to remove numbering (before each equation)
(T_{k})^{j+1}{\bf e}_{1}-(H_{k})^{j+1}{\bf e}_{1}
&=& \sum _{p=1}^{j-1}\big[{\bf e}_{k}^{\rm H}(T_{k})^{p}{\bf e}_{1}\big]\big[(H_{k})^{j-p}{\bf g}\big]+\big[{\bf e}_{k}^{\rm H}(T_{k})^{j}{\bf e}_{1}\big]{\bf g}\nonumber\\
&=& \sum _{p=1}^{j}
\big[{\bf e}_{k}^{\rm H}(T_{k})^{p}{\bf e}_{1}\big]\big[(H_{k})^{j-p}{\bf g}\big],
\end{eqnarray*}
and the relation (\ref{36}) is established.
Combining (\ref{equ3-3}) and (\ref{36}) yields
\begin{eqnarray*}
% \nonumber to remove numbering (before each equation)
\widehat{\bf y}_{k}(t)-{\bf y}_{k}(t)
&=& \beta V_{k} \sum _{m=2}^{\infty}\Big[\frac{(-t)^{m}}{m!}\Big] \Big[(T_{k})^{m}{\bf e}_{1}-(H_{k})^{m}{\bf e}_{1}\Big]\nonumber \\
&=& \beta V_{k} \sum _{m=2}^{\infty}\Big[\frac{(-t)^{m}}{m!}\Big] \Big(\sum _{p=1}^{m-1} \big[{\bf e}_{k}^{\rm H}(T_{k})^{p}{\bf e}_{1}\big]\big[(H_{k})^{m-1-p}{\bf g}\big]\Big).
\end{eqnarray*}
In order to prove (\ref{eqn3.9}), denote
\begin{equation*}
\alpha={\bf e}_{k}^{\rm H}\widehat{\bf u}_{k}(t)-{\bf e}_{k}^{\rm H}{\bf u}_{k}(t)
=\beta \sum _{m=2}^{\infty}
\Big[\frac{(-t)^{m}}{m!}\Big] \sum _{p=1}^{m-1} \big[{\bf
e}_{k}^{\rm H}(T_{k})^{p}{\bf e}_{1}\big]\big[{\bf e}_{k}^{\rm H}(H_{k})^{m-1-p}{\bf g}\big],
\end{equation*}
and we derive from (\ref{eqn3.6}) that
\begin{eqnarray*}
\widehat{\bf r}_{k}(t)
&=&-h_{k+1,k}\big[{\bf e}_{k}^{\rm H}{\bf u}_{k}(t) + \alpha \big]  {\bf v}_{k+1}+\gamma h_{k+1,k}^{2}[{\bf e}_{k}^{\rm H}{\bf u}_{k}(t) + \alpha ] V_{k} (I+\gamma
H_{k})^{\rm -H} {\bf e}_{k}\nonumber\\
&=&{\bf r}_{k}(t)-(h_{k+1,k}\alpha){\bf v}_{k+1}+\gamma h_{k+1,k}^{2}\big[{\bf e}_{k}^{\rm H}{\rm exp}(-t H_{k})\beta{\bf e}_{1}+ \alpha \big] V_{k} (I+\gamma
H_{k})^{\rm -H} {\bf e}_{k}.
\end{eqnarray*}
\end{proof}

%\begin{rem}
%%We point out that the harmonic Arnoldi approximation is not new. Indeed, it is the harmonic Ritz approach proposed in \cite{Harmonic}.
%In \cite{Harmonic}, Hochbruck and Hochstenbach reviewed three different derivations of a harmonic Ritz approach for matrix functions.
%However, they did not consider the residual of matrix function.
%As a comparison, our derivation is based on the residual of the matrix exponential and the harmonic projection technique for large eigenproblems, so it can be viewed as a complement to the harmonic extraction proposed in \cite{Harmonic}.
%\end{rem}

\subsection{A harmonic Arnoldi method for $\varphi_{\ell}~(\ell\geq 1)$ functions}

In this subsection, we focus on the harmonic Arnoldi method for
%\begin{equation}\label{equ11}
$$
{\bf y}(t)=\varphi_{\ell}(-tA){\bf v},\quad \ell=1,2,\ldots,s,
$$
%\end{equation}
and establish the relationship between the harmonic Arnoldi approximation and the Arnoldi approximation for $\varphi_{\ell}$ functions with $\ell\geq 1$.

Given the Arnoldi relation (\ref{equ22}), in the harmonic Arnoldi method, we make use of
\begin{equation}\label{eqnn3.18}
\widehat{\bf y}_{\ell,k}(t)=V_{k}\varphi_{\ell}(-t T_{k})\beta{\bf e}_{1}\equiv V_{k}\widehat{{\bf
u}}_{\ell,k}(t)
\end{equation}
as an approximate solution to ${\bf y}(t)$ in the Krylov subspace $\mathcal{K}_k(A,{\bf v})$, where $T_k$ is defined in (\ref{eqn3.3}).
Denote by
\begin{equation}\label{equ50}
\widehat{\bf r}_{\ell,k}(t)=-A\widehat{\bf y}_{\ell,k}(t)-\frac{\ell}{t}\widehat{\bf y}_{\ell,k}(t)+\frac{1}{t(\ell-1)!}{\bf v}-\widehat{\bf y}_{\ell,k}'(t)
\end{equation}
the residual with respect to $\widehat{\bf y}_{\ell,k}(t)$, we have the following result:
\begin{prop}
Under the above notations, we have
\begin{eqnarray}
\widehat{\bf r}_{\ell,k}(t)&=&\frac{1}{t}\big[-t AV_{k}\varphi_{\ell}(-t T_{k})\beta{\bf e}_{1}-V_{k}\varphi_{\ell-1}(-t T_{k})\beta{\bf e}_{1}+\frac{1}{(\ell-1)!}V_{k}\beta{\bf e}_{1}\big]\label{eqn3.33}\\
&=&V_{k+1} \left[\begin{array}{c} \gamma h_{k+1,k}^{2}(I+\gamma H_{k})^{\rm -H}{\bf e}_{k}\\
-h_{k+1,k}
\end{array}\right]\big[{\bf e}_{k}^{\rm H}\varphi_{\ell}(-t T_{k})\beta{\bf e}_{1}\big]\label{eqnn3.34},
\end{eqnarray}
and
\begin{equation}\label{eqn3.35}
[(I+\gamma A)V_{k}]^{\rm H}\cdot \widehat{{\bf r}}_{\ell,k}(t)={\bf 0}.
\end{equation}
\end{prop}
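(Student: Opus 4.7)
The plan is to mirror the proof of Proposition 2.1, replacing the Hessenberg matrix $H_k$ by $T_k=H_k+\mathbf{g}\mathbf{e}_k^{\rm H}$ (with $\mathbf{g}=\gamma h_{k+1,k}^{2}(I+\gamma H_k)^{-\rm H}\mathbf{e}_k$), and then to verify the orthogonality condition (3.24) by using the shifted Arnoldi identity (3.1). I would organize the argument in three stages.

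First, I would derive the differential equation satisfied by $\widehat{\mathbf{u}}_{\ell,k}(t)=\varphi_\ell(-tT_k)\beta\mathbf{e}_1$. Exactly as in (2.9) applied to the reduced matrix $T_k$ with initial vector $\beta\mathbf{e}_1$, one has
\begin{equation*}
\widehat{\mathbf{u}}_{\ell,k}'(t)=-T_k\widehat{\mathbf{u}}_{\ell,k}(t)-\tfrac{\ell}{t}\widehat{\mathbf{u}}_{\ell,k}(t)+\tfrac{1}{t(\ell-1)!}\beta\mathbf{e}_1.
\end{equation*}
Premultiplying by $V_k$ and using $V_k\beta\mathbf{e}_1=\mathbf{v}$, the term $-\tfrac{\ell}{t}\widehat{\mathbf{y}}_{\ell,k}(t)$ and the $\tfrac{1}{t(\ell-1)!}\mathbf{v}$ term in (3.21) cancel, collapsing the residual to
\begin{equation*}
\widehat{\mathbf{r}}_{\ell,k}(t)=-A\widehat{\mathbf{y}}_{\ell,k}(t)+V_kT_k\widehat{\mathbf{u}}_{\ell,k}(t).
\end{equation*}

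Second, I would obtain (3.22) from this compact form by applying the scalar recurrence (1.2) to the matrix $-tT_k$ acting on $\beta\mathbf{e}_1$, giving $tT_k\varphi_\ell(-tT_k)\beta\mathbf{e}_1=-\varphi_{\ell-1}(-tT_k)\beta\mathbf{e}_1+\tfrac{1}{(\ell-1)!}\beta\mathbf{e}_1$. Multiplying by $\tfrac{1}{t}V_k$ and inserting the result into the compact expression yields (3.22) immediately. For (3.23), I would expand $V_kT_k\widehat{\mathbf{u}}_{\ell,k}(t)=V_kH_k\varphi_\ell(-tT_k)\beta\mathbf{e}_1+V_k\mathbf{g}[\mathbf{e}_k^{\rm H}\varphi_\ell(-tT_k)\beta\mathbf{e}_1]$ and, separately, rewrite $A\widehat{\mathbf{y}}_{\ell,k}(t)$ by (2.1) as $V_kH_k\varphi_\ell(-tT_k)\beta\mathbf{e}_1+h_{k+1,k}[\mathbf{e}_k^{\rm H}\varphi_\ell(-tT_k)\beta\mathbf{e}_1]\mathbf{v}_{k+1}$. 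The $V_kH_k$ terms cancel and the remaining pieces combine into the claimed $V_{k+1}$-block.

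Third, for the orthogonality (3.24), I would use (3.1) in the form $[(I+\gamma A)V_k]^{\rm H}=(I+\gamma H_k)^{\rm H}V_k^{\rm H}+\gamma h_{k+1,k}\mathbf{e}_k\mathbf{v}_{k+1}^{\rm H}$. Applied to the closed form (3.23) and using $V_k^{\rm H}V_{k+1}=[I_k,\mathbf{0}]$ and $\mathbf{v}_{k+1}^{\rm H}V_{k+1}=[\mathbf{0}^{\rm H},1]$, one gets
\begin{equation*}
[(I+\gamma A)V_k]^{\rm H}\widehat{\mathbf{r}}_{\ell,k}(t)=\bigl[(I+\gamma H_k)^{\rm H}\mathbf{g}-\gamma h_{k+1,k}^{2}\mathbf{e}_k\bigr]\cdot[\mathbf{e}_k^{\rm H}\varphi_\ell(-tT_k)\beta\mathbf{e}_1].
\end{equation*}
The bracket vanishes by the very definition of $\mathbf{g}$, since $(I+\gamma H_k)^{\rm H}\mathbf{g}=\gamma h_{k+1,k}^{2}\mathbf{e}_k$.

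The only subtlety, and the place where one has to be careful, is the derivation of the differential equation for $\widehat{\mathbf{u}}_{\ell,k}(t)$ and the interplay between the recurrence (1.2) and the time-derivative identity; everything else is a direct bookkeeping exercise reusing relations already established in the paper (the Arnoldi identity (2.1), its shifted form (3.1), and the definition (3.3) of $T_k$).
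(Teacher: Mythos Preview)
Your proof is correct and follows essentially the same approach as the paper. The only organizational difference is that you first isolate the compact identity $\widehat{\mathbf r}_{\ell,k}(t)=-A\widehat{\mathbf y}_{\ell,k}(t)+V_kT_k\widehat{\mathbf u}_{\ell,k}(t)$ and derive both (3.22) and (3.23) from it, whereas the paper omits the derivation of (3.22) (referring back to Proposition~2.1) and then obtains (3.23) by expanding (3.22) directly via the Arnoldi relation and the recurrence for $\varphi$-functions; you also spell out the verification of the orthogonality (3.24) explicitly, while the paper simply asserts it as a consequence of (3.23) and the shifted Arnoldi identity (3.1).
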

\begin{proof}
The proof of (\ref{eqn3.33}) is similar to that of (\ref{equ51}), and is omitted.
For (\ref{eqnn3.34}), it follows from (\ref{equ22}), (\ref{eqn2.15}) and (\ref{eqn3.3}) that
\begin{eqnarray*}\label{equ56}
% \nonumber to remove numbering (before each equation)
&& -t A V_{k}\varphi_{\ell}(-t T_{k})\beta{\bf e}_{1}-V_{k} \varphi_{\ell-1}(-t T_{k})\beta{\bf e}_{1}+\frac{1}{(\ell-1)!}V_{k}\beta{\bf e}_{1}\nonumber \\
&=& -t\big(V_{k}H_{k}+h_{k+1,k}{\bf v}_{k+1}{\bf e}_{k}^{\rm H}\big)\varphi_{\ell}(-t T_{k})\beta{\bf e}_{1}-V_{k}\varphi_{\ell-1}(-t T_{k})\beta{\bf e}_{1}+\frac{1}{(\ell-1)!}V_{k}\beta{\bf e}_{1}\nonumber  \\
&=& t\gamma h_{k+1,k}^{2}\big[{\bf e}_{k}^{\rm H}\varphi_{\ell}(-t T_{k})\beta{\bf e}_{1}\big] V_{k}(I+\gamma H_{k})^{\rm -H}{\bf e}_{k}-t h_{k+1,k}\big[{\bf e}_{k}^{\rm H}\varphi_{\ell}(-t T_{k})\beta{\bf e}_{1}\big]{\bf v}_{k+1}\nonumber \\
&=& V_{k+1} \left[\begin{array}{c} t\gamma h_{k+1,k}^{2}(I+\gamma H_{k})^{\rm -H}{\bf e}_{k}\\
-t h_{k+1,k}
\end{array}\right] \big[{\bf e}_{k}^{\rm H}\varphi_{\ell}(-t T_{k})\beta{\bf e}_{1}\big],
\end{eqnarray*}
and the relation (\ref{eqn3.35}) is from (\ref{eqnn3.34}) and (\ref{equ28}).
\end{proof}

\begin{rem}
Proposition 3.1 indicates that the residual vector $\widehat{\bf r}_{\ell,k}(t)$ with $\ell\geq 1$ is orthogonal to the space ${\rm span}\{(I+\gamma A)V_{k}\}$, i.e.,
\begin{equation}\label{eqn3.20}
\left\{\begin{array}{c}
\widehat{\bf y}_{\ell,k}(t)=V_{k}\varphi_{\ell}(-t T_{k})\beta{\bf e}_{1}~\in~{\rm span}\{V_{k}\},\\
-A\widehat{\bf y}_{\ell,k}(t)-\frac{\ell}{t}\widehat{\bf y}_{\ell,k}(t)+\frac{1}{t(\ell-1)!}{\bf v}-\widehat{\bf y}_{\ell,k}'(t)~\bot~{\rm span}\{(I+\gamma A) V_{k}\}.
\end{array}\right.
\end{equation}
From {\rm(}\ref{eqn377}{\rm)} and {\rm(}\ref{eqn3.20}{\rm)}, the residual vectors $\widehat{\bf r}_{\ell,k}(t)~(\ell\geq 0)$ are colinear with the residual of the harmonic Ritz pairs \cite{Paige,Stewart}.
\end{rem}

The following theorem establishes the relationship between the Arnoldi method and the harmonic Arnoldi method for $\varphi_{\ell}$ functions with $\ell\geq 1$.
\begin{theorem}
Let ${\bf y}_{\ell,k}(t)=V_{k}\varphi_{\ell}(-t H_{k})\beta{\bf e}_{1}$ and $ \widehat{{\bf y}}_{\ell,k}(t)=V_{k}\varphi_{\ell}(-t T_{k})\beta{\bf e}_{1}$ be the approximate solutions obtained from the Arnoldi method and the harmonic Arnoldi method for the $\varphi_{\ell}~(\ell\geq 1)$ functions, respectively; and denote by ${\bf r}_{\ell,k}(t)$ and $\widehat{{\bf r}}_{\ell,k}(t)$ the corresponding residuals defined in (\ref{eqn2.10}) and (\ref{equ50}). If $H_k$ is nonsingular and $k\geq 2$, then we have that
\begin{eqnarray}
\widehat{{\bf y}}_{\ell,k}(t)-{\bf y}_{\ell,k}(t)
&=&V_{k}\Big[(-t H_{k})^{-\ell}\beta\sum _{m=2}^{\infty}\frac{(-t)^{m}}{m!}\sum _{p=1}^{m-1}[{\bf e}_{k}^{\rm H}(T_{k})^{p}{\bf e}_{1}][(H_{k})^{m-1-p}{\bf g}]\nonumber\\
&&-\sum _{j=1}^{\ell}[{\bf e}_{k}^{\rm H}\varphi_{j}(-t T_{k})\beta{\bf e}_{1}](-t H_{k})^{-\ell+j}[(H_{k})^{-1}{\bf g}]\Big],
\end{eqnarray}
and
\begin{equation}
\widehat{\bf r}_{\ell,k}(t)={\bf r}_{\ell,k}(t)-(h_{k+1,k}\varrho) {\bf v}_{k+1}+\gamma h_{k+1,k}^{2}\big[{\bf e}_{k}^{\rm H}\varphi_{\ell}(-t H_{k})\beta{\bf e}_{1}+\varrho \big] V_{k}(I+\gamma H_{k})^{\rm -H}{\bf e}_{k},
\end{equation}
where
\begin{equation*}
\varrho=\frac{\beta}{(\ell-1)!}{\int_0^1 \sum _{m=2}^{\infty}\Big[\frac{t^{m}(\theta-1)^{m}}{m!}\Big]\sum _{p=1}^{m-1}\big[{\bf e}_{k}^{\rm H}(T_{k})^{p}{\bf e}_{1}\big]\big[{\bf e}_{k}^{\rm H}(H_{k})^{m-1-p}{\bf g}\big]\theta^{\ell-1}d\theta}.
\end{equation*}
\end{theorem}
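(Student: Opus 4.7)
The plan is to establish both identities by combining the scalar recurrence (\ref{eqn11}), the integral representation (\ref{equ1}), the rank-one structure $T_k=H_k+{\bf g}{\bf e}_k^{\rm H}$ exposed in the proof of Theorem 3.1, and the two key building blocks from that proof: the expansion (\ref{equ3-3}) for $\widehat{\bf y}_k(t)-{\bf y}_k(t)$ together with the combinatorial identity (\ref{36}). In effect, Theorem 3.2 lifts Theorem 3.1 from $\varphi_0$ to $\varphi_\ell$ by peeling off, one index at a time, the correction forced by the rank-one perturbation $T_k-H_k$.

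For the first identity, I would introduce $D_\ell\equiv\varphi_\ell(-tT_k)\beta{\bf e}_1-\varphi_\ell(-tH_k)\beta{\bf e}_1$, so that $\widehat{\bf y}_{\ell,k}(t)-{\bf y}_{\ell,k}(t)=V_kD_\ell$. Applying the recurrence (\ref{eqn11}) with $z=-tT_k$ and $z=-tH_k$, testing both against $\beta{\bf e}_1$ and subtracting, the constants $\beta{\bf e}_1/(\ell-1)!$ cancel and one obtains
$$
D_{\ell-1}=-tH_k\,D_\ell-t(T_k-H_k)\varphi_\ell(-tT_k)\beta{\bf e}_1.
$$
Since $T_k-H_k={\bf g}{\bf e}_k^{\rm H}$, the last term collapses to the scalar multiple $t\bigl[{\bf e}_k^{\rm H}\varphi_\ell(-tT_k)\beta{\bf e}_1\bigr]{\bf g}$ of ${\bf g}$. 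As $H_k$ is nonsingular by hypothesis, $(-tH_k)^{-1}$ exists and left-multiplying yields the one-step recursion
$$
D_\ell=(-tH_k)^{-1}D_{\ell-1}-\bigl[{\bf e}_k^{\rm H}\varphi_\ell(-tT_k)\beta{\bf e}_1\bigr]H_k^{-1}{\bf g}.
$$
Unrolling this down to $D_0$, which is supplied by Theorem 3.1 through (\ref{equ3-3}) and (\ref{36}), produces exactly the expression claimed for $\widehat{\bf y}_{\ell,k}(t)-{\bf y}_{\ell,k}(t)$; the powers $(-tH_k)^{-\ell+j}$ in the statement arise from the $\ell-j$ applications of $(-tH_k)^{-1}$ that sit between the step at which the $j$-th scalar ${\bf e}_k^{\rm H}\varphi_j(-tT_k)\beta{\bf e}_1$ enters and the final step.

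For the second identity, I would start from the two closed-form residuals already at hand. Proposition 2.1 gives ${\bf r}_{\ell,k}(t)=-h_{k+1,k}\bigl[{\bf e}_k^{\rm H}\varphi_\ell(-tH_k)\beta{\bf e}_1\bigr]{\bf v}_{k+1}$, and Proposition 3.1 expresses $\widehat{\bf r}_{\ell,k}(t)$ through formula (\ref{eqnn3.34}) as a fixed vector in ${\rm span}\{V_{k+1}\}$ weighted by the scalar ${\bf e}_k^{\rm H}\varphi_\ell(-tT_k)\beta{\bf e}_1$. Writing ${\bf e}_k^{\rm H}\varphi_\ell(-tT_k)\beta{\bf e}_1={\bf e}_k^{\rm H}\varphi_\ell(-tH_k)\beta{\bf e}_1+\varrho$ with $\varrho={\bf e}_k^{\rm H}D_\ell$, and substituting this split into (\ref{eqnn3.34}), the ${\bf v}_{k+1}$ component absorbs one full copy of ${\bf r}_{\ell,k}(t)$ and the remainder is precisely the claimed decomposition. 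To identify $\varrho$ explicitly, I would plug the integral representation (\ref{equ1}) of $\varphi_\ell$ into ${\bf e}_k^{\rm H}\bigl[\varphi_\ell(-tT_k)-\varphi_\ell(-tH_k)\bigr]\beta{\bf e}_1$, then apply Theorem 3.1 at the scaled time $s=t(1-\theta)$ (so that $(-s)^m=t^m(\theta-1)^m$) together with the identity (\ref{36}), and read off the $(k,1)$-entry to obtain the stated integral formula for $\varrho$.

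The main obstacle, in my view, is pinning down the one-step recursion for $D_\ell$ correctly: the sign must be tracked with care, and, crucially, the scalar coefficient that drops out of $(T_k-H_k)\varphi_\ell(-tT_k)\beta{\bf e}_1$ involves $\varphi_\ell$ itself rather than $\varphi_{\ell-1}$, as the recurrence step might at first suggest. Once this recursion is in hand, the telescoping down to $D_0$ is routine and the residual decomposition follows by plain bookkeeping built on Theorem 3.1 and Propositions 2.1 and 3.1.
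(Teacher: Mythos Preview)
Your proposal is correct and follows essentially the same route as the paper's own proof: derive the one-step recursion $D_\ell=(-tH_k)^{-1}D_{\ell-1}-[{\bf e}_k^{\rm H}\varphi_\ell(-tT_k)\beta{\bf e}_1]H_k^{-1}{\bf g}$ from the $\varphi$-recurrence (\ref{eqn11}) and the rank-one structure $T_k=H_k+{\bf g}{\bf e}_k^{\rm H}$, telescope down to $D_0$ and invoke (\ref{equ3-3})--(\ref{36}), and for the residual identity split ${\bf e}_k^{\rm H}\varphi_\ell(-tT_k)\beta{\bf e}_1$ as ${\bf e}_k^{\rm H}\varphi_\ell(-tH_k)\beta{\bf e}_1+\varrho$ inside (\ref{eqnn3.34}), identifying $\varrho$ via the integral representation (\ref{equ1}) applied at scaled time $t(1-\theta)$. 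One cosmetic slip: when you say the last term ``collapses to the scalar multiple $t[{\bf e}_k^{\rm H}\varphi_\ell(-tT_k)\beta{\bf e}_1]{\bf g}$'' the sign should be $-t$, but your stated recursion for $D_\ell$ is nonetheless correct, so the argument goes through.
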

\begin{proof}
It follows from the recurrence relation of the $\varphi$-functions that
$$
\varphi_{\ell-1}(-t T_{k})\beta{\bf e}_{1}=-t T_{k}\varphi_{\ell}(-t T_{k})\beta{\bf e}_{1}+\frac{1}{(\ell-1)!}\beta{\bf e}_{1},\quad \ell=1,2,\ldots
$$
and
$$
\varphi_{\ell-1}(-t H_{k})\beta{\bf e}_{1}=-t H_{k}\varphi_{\ell}(-t H_{k})\beta{\bf e}_{1}+\frac{1}{(\ell-1)!}\beta{\bf e}_{1},\quad \ell=1,2,\ldots
$$
Therefore,
\begin{equation}\label{equ80}
\varphi_{\ell}(-t T_{k})\beta{\bf e}_{1}-\varphi_{\ell}(-t H_{k})\beta{\bf e}_{1}=(-t H_{k})^{-1}\Big[\varphi_{\ell-1}(-t T_{k})\beta{\bf e}_{1}-\varphi_{\ell-1}(-t H_{k})\beta{\bf e}_{1}\Big]-[(H_{k})^{-1}{\bf g}]\big[{\bf e}_{k}^{\rm H}\varphi_{\ell}(-t T_{k})\beta{\bf e}_{1}\big].
\end{equation}
Using the same trick, we obtain
\begin{eqnarray*}
\varphi_{\ell-1}(-t T_{k})\beta{\bf e}_{1}-\varphi_{\ell-1}(-t H_{k})\beta{\bf e}_{1}&=&(-t H_{k})^{-1}\Big[\varphi_{\ell-2}(-t T_{k})\beta{\bf e}_{1}-\varphi_{\ell-2}(-t H_{k})\beta{\bf e}_{1}\Big]\\
&&-[(H_{k})^{-1}{\bf g}]\big[{\bf e}_{k}^{\rm H}\varphi_{\ell-1}(-t T_{k})\beta{\bf e}_{1}\big].
\end{eqnarray*}
As a result, the relation (\ref{equ80}) can be written as
\begin{eqnarray*}
% \nonumber to remove numbering (before each equation)
\varphi_{\ell}(-t T_{k})\beta{\bf e}_{1}-\varphi_{\ell}(-t H_{k})\beta{\bf e}_{1}&=&(-t H_{k})^{-1}\Bigg[(-t H_{k})^{-1}\Big[\varphi_{\ell-2}(-t T_{k})\beta{\bf e}_{1}-\varphi_{\ell-2}(-t H_{k})\beta{\bf e}_{1}\Big]\nonumber\\
&&-[(H_{k})^{-1}{\bf g}]\big[{\bf e}_{k}^{\rm H}\varphi_{\ell-1}(-t T_{k})\beta{\bf e}_{1}\big]\Bigg]-[(H_{k})^{-1}{\bf g}]\big[{\bf e}_{k}^{\rm H}\varphi_{\ell}(-t T_{k})\beta{\bf e}_{1}\big]\nonumber\\
&=& (-t H_{k})^{-2}\Big[\varphi_{\ell-2}(-t T_{k})\beta{\bf e}_{1}-\varphi_{\ell-2}(-t H_{k})\beta{\bf e}_{1}\Big]\nonumber\\
&& -(-t H_{k})^{-1}[(H_{k})^{-1}{\bf g}]\big[{\bf e}_{k}^{\rm H}\varphi_{\ell-1}(-t T_{k})\beta{\bf e}_{1}\big]-[(H_{k})^{-1}{\bf g}]\big[{\bf e}_{k}^{\rm H}\varphi_{\ell}(-t T_{k})\beta{\bf e}_{1}\big].
\end{eqnarray*}
By induction,
\begin{eqnarray*}
\varphi_{\ell}(-t T_{k})\beta{\bf e}_{1}-\varphi_{\ell}(-t H_{k})\beta{\bf e}_{1}&=&(-t H_{k})^{-\ell}\Big[\varphi_{0}(-t T_{k})\beta{\bf e}_{1}-\varphi_{0}(-t H_{k})\beta{\bf e}_{1}\Big]\nonumber\\
&&-\sum _{j=1}^{\ell}\big[{\bf e}_{k}^{\rm H}\varphi_{j}(-t T_{k})\beta{\bf e}_{1}\big](-t H_{k})^{-\ell+j}\big[(H_{k})^{-1}{\bf g}\big].
\end{eqnarray*}
From (\ref{36}), we obtain
\begin{eqnarray}
\varphi_{\ell}(-t T_{k})\beta{\bf e}_{1}-\varphi_{\ell}(-t H_{k})\beta{\bf e}_{1}&=&(-t H_{k})^{-\ell}\beta\sum _{m=2}^{\infty}\frac{(-t)^{m}}{m!}\sum _{p=1}^{m-1}\big[{\bf e}_{k}^{\rm H}(T_{k})^{p}{\bf e}_{1}\big]\big[(H_{k})^{m-1-p}{\bf g}\big]\nonumber\\
&&-\sum _{j=1}^{\ell}\big[{\bf e}_{k}^{\rm H}\varphi_{j}(-t T_{k})\beta{\bf e}_{1}\big](-t H_{k})^{-\ell+j}\big[(H_{k})^{-1}{\bf g}\big].
\end{eqnarray}
So we get
\begin{eqnarray}
\widehat{{\bf y}}_{\ell,k}(t)-{\bf y}_{\ell,k}(t)
&=&V_{k}\big[\varphi_{\ell}(-t T_{k})\beta{\bf e}_{1}-\varphi_{\ell}(-t H_{k})\beta{\bf e}_{1}\big]\nonumber\\
&=&V_{k}\cdot\Big[(-t H_{k})^{-\ell}\beta\sum _{m=2}^{\infty}\frac{(-t)^{m}}{m!}\sum _{p=1}^{m-1}[{\bf e}_{k}^{\rm H}(T_{k})^{p}{\bf e}_{1}][(H_{k})^{m-1-p}{\bf g}]\nonumber\\
&&-\sum _{j=1}^{\ell}[{\bf e}_{k}^{\rm H}\varphi_{j}(-t T_{k})\beta{\bf e}_{1}](-t H_{k})^{-\ell+j}[(H_{k})^{-1}{\bf g}]\Big].
\end{eqnarray}

On the other hand, we have that
\begin{equation*}
\varphi_{\ell}(-t T_{k})\beta{\bf e}_{1}=\frac{1}{(\ell-1)!}{\int_0^1 {\rm exp}\big[(1-\theta)(-t T_{k})\big]\theta^{\ell-1}\beta{\bf e}_{1}d\theta},
\end{equation*}
\begin{equation*}
\varphi_{\ell}(-t H_{k})\beta{\bf e}_{1}=\frac{1}{(\ell-1)!}{\int_0^1 {\rm exp}\big[(1-\theta)(-t H_{k})\big]\theta^{\ell-1}\beta{\bf e}_{1}d\theta},
\end{equation*}
and we have from (\ref{36}) that
\begin{eqnarray}\label{equ59}
\varrho
&=&{\bf e}_{k}^{\rm H}\varphi_{\ell}(-t T_{k})\beta{\bf e}_{1}-{\bf e}_{k}^{\rm H}\varphi_{\ell}(-t H_{k})\beta{\bf e}_{1}\nonumber\\
&=&\frac{\beta}{(\ell-1)!}{\int_0^1 \sum _{m=2}^{\infty}\Big[\frac{t^{m}(\theta-1)^{m}}{m!}\Big]\sum _{p=1}^{m-1}\big[{\bf e}_{k}^{\rm H}(T_{k})^{p}{\bf e}_{1}\big]\big[{\bf e}_{k}^{\rm H}(H_{k})^{m-1-p}{\bf g}\big]\theta^{\ell-1}d\theta}.
\end{eqnarray}
Then we obtain from (\ref{eqnn3.34}) that
\begin{eqnarray*}
% \nonumber to remove numbering (before each equation)
\widehat{\bf r}_{\ell,k}(t)
&=& \gamma h_{k+1,k}^{2}\big[{\bf e}_{k}^{\rm H}\varphi_{\ell}(-t H_{k})\beta{\bf e}_{1}+ \varrho \big] V_{k}(I+\gamma H_{k})^{\rm -H}{\bf e}_{k}-h_{k +1,k}\big[{\bf e}_{k}^{\rm H}\varphi_{\ell}(-t H_{k})\beta{\bf e}_{1}+\varrho \big]{\bf v}_{k+1}\nonumber\\
&=&{\bf r}_{\ell,k}(t)-(h_{k+1,k} \varrho) {\bf v}_{k+1}+\gamma h_{k+1,k}^{2}\big[{\bf e}_{k}^{\rm H}\varphi_{\ell}(-t H_{k})\beta{\bf e}_{1}+\varrho \big] V_{k}(I+\gamma H_{k})^{\rm -H}{\bf e}_{k}.
\end{eqnarray*}
\end{proof}

\subsection{A thick-restarted harmonic Arnoldi algorithm for $\varphi$-functions}

When using the Krylov subspace method for approximating the action of a matrix
function on a vector, the maximum number of iterations that can be performed is often limited by
the storage requirements of the full Arnoldi basis.
In this subsection, we propose a thick-restarted harmonic Arnoldi algorithm for the $\varphi$-functions.
The thick-restarting strategy was firstly proposed by Wu and Simon
for large symmetric eigenvalue problem \cite{WS}, and was generalized to solving large non-Hermitian eigenproblems \cite{JW,MZ} and linear systems \cite{Morgan}.

Our thick-restarted harmonic Arnoldi algorithm is a little similar
to the deflated GMRES algorithm for linear systems \cite{Morgan}. So it is simple to implement compared with the deflated Krylov subspace methods proposed in \cite{EEG,From}.
The key to our new algorithm is two-fold. First, we apply an additive correction to the {\it residual}
of the $\varphi$-functions when restarting, instead of the {\it error} used in \cite{ETNA,MA,SINUM,EEG,Frommer,IMA,SISC}. Second, we use the fact that the residual of the harmonic Arnoldi approximation is colinear with that of the harmonic Ritz pairs. Consequently, one can compute the approximations to (\ref{eqn1.3}) simultaneously in the same search subspace.

Let's consider how to thick-restart the harmonic Arnoldi
method for the $\varphi$-functions. We denote $\widehat{\bf y}_{0,k}(t)=\widehat{\bf y}_k(t)$ and $\widehat{\bf r}_{0,k}(t)=\widehat{\bf r}_k(t)$. In the first cycle of the thick-restarted harmonic Arnoldi algorithm, we run the $k$-step Arnoldi process and generate the Arnoldi relation (\ref{equ22}).
We then compute the approximations $\widehat{\bf y}_{0,k}(t)$ and $\widehat{\bf y}_{\ell,k}(t)$ via (\ref{eqn3.4}) and (\ref{eqnn3.18}), respectively. If the residual norms are larger than a given tolerance {\it tol} \big(see (\ref{eqn3.5}) and (\ref{eqnn3.34})\big), one computes some harmonic Ritz pairs $(\widetilde{\lambda}_i,\widetilde{\bf x}_i)~(i=1,2,\ldots,q)$ of $A$, which satisfy \cite{Paige,Stewart}
\begin{equation}
\bigg\{\begin{array}{c}
\widetilde{\bf x}_{i}\in {\rm span}\{V_k^{(1)}\},\\
A\widetilde{\bf x}_{i}-\widetilde{\lambda}_{i}\widetilde{\bf x}_{i}\bot (I+\gamma A){\rm span}\{V_k^{(1)}\}.
\end{array}
\end{equation}
For simplicity, we denote by the variables computed from the ``previous" cycle with a superscript $(\cdot)^{(1)}$. For instance,
$V_k^{(1)}$ represents the orthnormal basis obtained from the ``previous" Arnoldi iteration.
Let $\widetilde{\bf x}_{i}=V_k^{(1)}\widetilde{\bf \psi}_i,~i=1,2,\ldots,q$.
We then construct a real matrix using
$\big\{\widetilde{\bf \psi}_{i}\}_{i=1}^q$: separate $\widetilde{\bf \psi}_{i}$ into the real
and imaginary part if it is complex, and both parts should be included and adjust $q$ if necessary. Then orthonormalize
these vectors in order to form a $k\times q$ orthonormal matrix
$W_{q}$.

We consider how to establish an Arnoldi-like relation for the new cycle, using the eigen-information retained from the ``previous" cycle. Let $\widehat{\bf r}_{\ell,k}^{(1)}(t)=V_{k+1}^{(1)}{\bf w}_{k}^{(1)}(t)\cdot\big[{\bf e}_{k}^{\rm H}{\varphi_{\ell}}(-t T_{k})\beta {\bf e}_{1}\big]$, where
\begin{equation}\label{eqn3.18}
{\bf w}_{k}^{(1)}(t)=\left[\begin{array}{c} \gamma h_{k+1,k}^{2}(I+\gamma H_{k})^{\rm -H}{\bf e}_{k}\\
-h_{k+1,k}
\end{array}\right].
\end{equation}
Thus, the residuals $\widehat{\bf r}_{\ell,k}^{(1)}(t)~(\ell \geq 0)$ are colinear with each other and are independent of $\ell$.
Denote by
$\widehat{W}_{q}=[W_{q};~{\bf 0}]\in\mathbb{R}^{(k+1)\times q}$ the matrix obtained from appending a zero row at the bottom
of $W_q$, then
\begin{eqnarray}\label{4.2}
AV_{k}^{(1)}W_{q}&\subseteq& \mbox{span}\{V_{k}^{(1)}W_{q},~\widehat{\bf r}_{\ell,k}^{(1)}(t)\}\nonumber\\
&=& \mbox{span}\Big\{V_{k+1}^{(1)}\big[\widehat{W}_{q},~{\bf w}_{k}^{(1)}(t)/\|{\bf w}_{k}^{(1)}(t)\|_2\big]\Big\}\nonumber\\
&=& \mbox{span}\{V_{k+1}^{(1)}W_{q+1}\},
\end{eqnarray}
where $W_{q+1}=\big[\widehat{W}_{q},~{\bf w}_{k}^{(1)}(t)/\|{\bf w}_{k}^{(1)}(t)\|_2\big]\in\mathbb{R}^{(k+1)\times (q+1)}$. We orthonormalize the columns of $W_{q+1}$ and still denote the resulting $(k+1)$-by-$(q+1)$ matrix by $W_{q+1}$. Let $V_{q}^{new}=V_{k}^{(1)}W_{q}$ and $V_{q+1}^{new}=V_{k+1}^{(1)}W_{q+1}$, by (\ref{4.2}),
there is a $(q+1)\times q$ matrix $\bar{H}_{q}^{new}$ such that
\begin{eqnarray*}
AV_{k}^{(1)}W_{q}=AV_{q}^{new}=(V_{k+1}^{(1)}W_{q+1})\bar{H}_{q}^{new}=V_{q+1}^{new}\bar{H}_{q}^{new},
\end{eqnarray*}
where
$\bar{H}_{q}^{new}=(V_{q+1}^{new})^{\rm H}AV_{q}^{new}$. Then we have the following relation
$$
AV_{q}^{new}=V_{q+1}^{new}\bar{H}_{q}^{new}.
$$
We then apply the standard Arnoldi process by using ${\bf v}_{q+1}^{new}=V_{q+1}^{new}(:,q+1)$ \big(i.e., the $(q+1)$-th column of $V_{q+1}^{new}$\big) as the initial vector, to form the orthonormal basis
$V_{k+1}^{(2)}$ for the new cycle
\begin{equation}
AV_{k}^{(2)}=V_{k}^{(2)}{H}_{k}^{(2)}+h_{k+1,k}^{(2)}{\bf v}_{k+1}^{(2)}{\bf e}_k^{\rm H}=V_{k+1}^{(2)}\bar{H}_k^{(2)}.
\end{equation}
Therefore, some recurrences similar to the Arnoldi
recurrence (\ref{equ22}) are generated by the thick-restarted
Arnoldi algorithm. Notice that the matrix composed of the first $(q+1)$ rows and the first $q$ columns of ${H}_{k}^{(2)}$ is full rather than upper Hessenberg.
Furthermore, one only requires to perform $(k-q)$ matrix-vector products at each cycle after the first, since the first $q$ matrix-vector products are carried out ``implicitly".

We discuss how to update the approximate solution in the new search space ${\rm span}\{V_{k}^{(2)}\}$. We first consider how to update the approximation $\widehat{\bf y}_{0,k}^{(2)}(t)$
for the matrix exponential.
To do this, we seek a vector
$\widehat{\bf z}_{0,k}^{(2)}(t)$ such that
\begin{equation}\label{equ45}
\widehat{\bf y}_{0,k}^{(2)}(t)= \widehat{\bf y}_{0,k}^{(1)}(t)+V_{k}^{(2)}\widehat{\bf z}_{0,k}^{(2)}(t)
\end{equation}
is a new approximation to ${\bf y}(t)$. We note that
\begin{eqnarray}\label{eqn3.21}
\widehat{\bf r}_{0,k}^{(2)}(t)
&=& -A \widehat{\bf y}_{0,k}^{(2)}(t)-  \widehat{\bf y}_{0,k}^{(2)}(t)^{'}\nonumber\\
&=& -A\big[V_{k}^{(1)}\widehat{\bf u}_{0,k}^{(1)}(t)+V_{k}^{(2)}\widehat{\bf z}_{0,k}^{(2)}(t)\big]-\big[V_{k}^{(1)}\widehat{\bf u}_{0,k}^{(1)}(t)^{'}+V_{k}^{(2)}  \widehat{\bf z}_{0,k}^{(2)}(t)^{'}\big]\nonumber\\
&=& \big[-AV_{k}^{(1)}\widehat{\bf u}_{0,k}^{(1)}(t)- V_{k}^{(1)}\widehat{\bf u}_{0,k}^{(1)}(t)^{'}\big]-AV_{k}^{(2)}\widehat{\bf z}_{0,k}^{(2)}(t)-V_{k}^{(2)}  \widehat{\bf z}_{0,k}^{(2)}(t)^{'}\nonumber\\
&=& \widehat{\bf r}_{0,k}^{(1)}(t)-AV_{k}^{(2)}\widehat{\bf z}_{0,k}^{(2)}(t)-V_{k}^{(2)}\widehat{\bf z}_{0,k}^{(2)}(t)^{'}.
\end{eqnarray}
Recall from the thick-restarting procedure that $ \widehat{\bf r}_{0,k}^{(1)}(t) \in {\rm span}\{{V_{k+1}^{(2)}}\}$, so there exists a vector ${\bf c}^{(2)}_{0,k}(t)$ such that $\widehat{\bf r}_{0,k}^{(1)}(t)= V_{k+1}^{(2)}{\bf c}_{0,k}^{(2)}(t)$. Let
$$
\widehat{\bf r}_{0,k}^{(2)}(t) \bot {\rm span}\{(I+\gamma A)V_{k}^{(2)}\},
$$
that is,
$$
\big[(I+\gamma A)V_{k}^{(2)}\big]^{\rm H}\big[V_{k+1}^{(2)}{\bf c}_{0,k}^{(2)}(t)-AV_{k}^{(2)}\widehat{\bf z}_{0,k}^{(2)}(t)-V_{k}^{(2)}\widehat{\bf z}_{0,k}^{(2)}(t)^{'}\big]
={\bf 0},
$$
we obtain
\begin{equation}\label{equ47}
\left\{\begin{array}{l}  \widehat{\bf z}_{0,k}^{(2)}(t)^{'}=-\Xi_{k}^{(2)}\bar{H}_{k}^{(2)}\widehat{\bf z}_{0,k}^{(2)}(t)+\Xi_{k}^{(2)}{\bf c}_{0,k}^{(2)}(t),\\
\widehat{\bf z}_{0,k}^{(2)}(0)={\bf 0},
\end{array}
\right.
\end{equation}
where
\begin{equation}\label{eqnn3.24}
\Xi_{k}^{(2)}=(I+\gamma H_k^{(2)})^{\rm -H}(\bar{I}+\gamma \bar{H}_k^{(2)})^{\rm H},
\end{equation}
and $\bar{I}$ is the $(k+1)\times k$ matrix being the $k\times k$ identity matrix with an additional zero row at the bottom.

Thus, we update the approximate solution to ${\bf y}(t)$ via solving a small-sized ODE (\ref{equ47}). The residual is
\begin{eqnarray}\label{eqn3.25}
% \nonumber to remove numbering (before each equation)
\widehat{\bf r}_{0,k}^{(2)}(t)
&=& {V}_{k+1}^{(2)} {\bf c}_{0,k}^{(2)}(t)-{V}_{k+1}^{(2)}\bar{H}_{k}^{(2)}\widehat{\bf z}_{0,k}^{(2)}(t)-{V}_{k}^{(2)}\widehat{\bf z}_{0,k}^{(2)}(t)^{'} \nonumber\\
&=& {V}_{k+1}^{(2)}\Bigg[{\bf c}_{0,k}^{(2)}(t)-\bar{H}_{k}^{(2)}\widehat{\bf z}_{0,k}^{(2)}(t)-\left[\begin{array}{c}   \widehat{\bf z}_{0,k}^{(2)}(t)^{'}\\
0
\end{array}\right]\Bigg],
\end{eqnarray}
with
\begin{equation}\label{eqn3.24}
\| \widehat{\bf r}_{0,k}^{(2)}(t) \| _{2}=\Bigg\|{\bf c}_{0,k}^{(2)}(t)-\bar{H}_{k}^{(2)}\widehat{\bf z}_{0,k}^{(2)}(t)-\left[\begin{array}{c}   \widehat{\bf z}_{0,k}^{(2)}(t)^{'}\\
0
\end{array}\right]\Bigg\|_{2}.
\end{equation}

Next we discuss how to update the approximate solution for the $\varphi_{\ell}~(\ell \geq 1)$ functions during cycles. Similarly, given the new search subspace $V_k^{(2)}$ and the approximation $\widehat{\bf y}_{\ell,k}^{(1)}(t)$ obtained from the ``previous" cycle, we seek a vector $\widehat{\bf z}_{\ell,k}^{(2)}(t)$ such that
\begin{equation}\label{eqn3.29}
\widehat{\bf y}_{\ell,k}^{(2)}(t)=\widehat{\bf y}_{\ell,k}^{(1)}(t)+V_{k}^{(2)}\widehat{\bf z}_{\ell,k}^{(2)}(t)
\end{equation}
is the new approximation to ${\bf y}(t)$. The residual is
\begin{eqnarray}
\widehat{\bf r}_{\ell,k}^{(2)}(t)
&=& -A\widehat{\bf y}_{\ell,k}^{(2)}(t)-\frac{\ell}{t}\widehat{\bf y}_{\ell,k}^{(2)}(t)+\frac{1}{t(\ell-1)!}{\bf v}-\widehat{\bf y}_{\ell,k}^{(2)}(t)'\nonumber\\
&=& -A\bigg[\widehat{\bf y}_{\ell,k}^{(1)}(t)+V_{k}^{(2)}\widehat{\bf z}_{\ell,k}^{(2)}(t)\bigg]-\frac{\ell}{t}\bigg[\widehat{\bf y}_{\ell,k}^{(1)}(t)+V_{k}^{(2)}\widehat{\bf z}_{\ell,k}^{(2)}(t)\bigg]+\frac{1}{t(\ell-1)!}{\bf v}-\bigg[\widehat{\bf y}_{\ell,k}^{(1)}(t)'+V_{k}^{(2)}\widehat{\bf z}_{\ell,k}^{(2)}(t)'\bigg]\nonumber\\
&=& \Big[-A\widehat{\bf y}_{\ell,k}^{(1)}(t)-\frac{\ell}{t}\widehat{\bf y}_{\ell,k}^{(1)}(t)+\frac{1}{t(\ell-1)!}{\bf v}-\widehat{\bf y}_{\ell,k}^{(1)}(t)'\Big]\nonumber\\
&& -AV_{k}^{(2)}\widehat{\bf z}_{\ell,k}^{(2)}(t)-\frac{\ell}{t}V_{k}^{(2)}\widehat{\bf z}_{\ell,k}^{(2)}(t)-V_{k}^{(2)}\widehat{\bf z}_{\ell,k}^{(2)}(t)'.
\end{eqnarray}
It follows from the thick-restarting strategy that
\begin{eqnarray*}
\widehat{\bf r}_{\ell,k}^{(1)}(t)
&=& -A\widehat{\bf y}_{\ell,k}^{(1)}(t)-\frac{\ell}{t}\widehat{\bf y}_{\ell,k}^{(1)}(t)+\frac{1}{t(\ell-1)!}{\bf v}-\widehat{\bf y}_{\ell,k}^{(1)}(t)' \in {\rm span}\big\{V_{k+1}^{(2)}\big\}.
\end{eqnarray*}
So there exists a vector ${\bf c}_{\ell,k}^{(2)}(t)$ such that $\widehat{\bf r}_{\ell,k}^{(1)}(t)=V_{k+1}^{(2)}{\bf c}_{\ell,k}^{(2)}(t)$. Let
\begin{equation*}
\widehat{\bf r}_{\ell,k}^{(2)}(t)=V_{k+1}^{(2)}{\bf c}_{\ell,k}^{(2)}(t)-AV_{k}^{(2)}\widehat{\bf z}_{\ell,k}^{(2)}(t)-\frac{\ell}{t}V_{k}^{(2)}\widehat{\bf z}_{\ell,k}^{(2)}(t)-V_{k}^{(2)}\widehat{\bf z}_{\ell,k}^{(2)}(t)' \perp {\rm span}\{(I+\gamma A)V_{k}^{(2)}\},
\end{equation*}
i.e.,
$$
\big[(I+\gamma A)V_{k}^{(2)}\big]^{\rm H}\Big[V_{k+1}^{(2)}{\bf c}_{\ell,k}^{(2)}(t)-AV_{k}^{(2)}\widehat{\bf z}_{\ell,k}^{(2)}(t)-\frac{\ell}{t}V_{k}^{(2)}\widehat{\bf z}_{\ell,k}^{(2)}(t)-V_{k}^{(2)}\widehat{\bf z}_{\ell,k}^{(2)}(t)'\Big]={\bf 0},
$$
then
\begin{equation}\label{eqn3.41}
\left\{\begin{array}{l}
\widehat{\bf z}_{\ell,k}^{(2)}(t)^{'}=-\big[\Xi_{k}^{(2)}\bar{H}_{k}^{(2)}+\frac{\ell}{t}I\big]\widehat{\bf z}_{\ell,k}^{(2)}(t)+\Xi_{k}^{(2)}{\bf c}_{\ell,k}^{(2)}(t),\\
\widehat{\bf z}_{\ell,k}^{(2)}(0)={\bf 0},
\end{array}
\right.
\end{equation}
where $\Xi_{k}^{(2)}$ is defined in (\ref{eqnn3.24}). Therefore, we update (\ref{eqn3.29}) by solving the above small-sized ODE.
The residual is
\begin{eqnarray}\label{eqn3.42}
% \nonumber to remove numbering (before each equation)
\widehat{\bf r}_{\ell,k}^{(2)}(t)
&=& {V}_{k+1}^{(2)} {\bf c}_{\ell,k}^{(2)}(t)-{V}_{k+1}^{(2)}\bar{H}_{k}^{(2)}\widehat{\bf z}_{\ell,k}^{(2)}(t)-\frac{\ell}{t}V_k^{(2)}\widehat{\bf z}_{\ell,k}^{(2)}(t)-{V}_{k}^{(2)}\widehat{\bf z}_{\ell,k}^{(2)}(t)^{'} \nonumber\\
&=& {V}_{k+1}^{(2)}\Bigg[{\bf c}_{\ell,k}^{(2)}(t)-\big(\bar{H}_{k}^{(2)}+\frac{\ell}{t}\bar{I}\big)\widehat{\bf z}_{\ell,k}^{(2)}(t)-\left[\begin{array}{c}   \widehat{\bf z}_{\ell,k}^{(2)}(t)^{'}\\
0
\end{array}\right]\Bigg],\quad \ell\geq 1,
\end{eqnarray}
and
\begin{equation}\label{eqn3.43}
\| \widehat{\bf r}_{\ell,k}^{(2)}(t) \| _{2}=\Bigg\|{\bf c}_{\ell,k}^{(2)}(t)-\big[\bar{H}_{k}^{(2)}+\frac{\ell}{t}\bar{I}\big]\widehat{\bf z}_{\ell,k}^{(2)}(t)-\left[\begin{array}{c}
\widehat{\bf z}_{\ell,k}^{(2)}(t)^{'}\\
0
\end{array}\right]\Bigg\|_{2},\quad \ell\geq 1.
\end{equation}
When $\ell=0$, it is seen that (\ref{eqn3.41}) and (\ref{eqn3.42}) reduce to (\ref{equ47}) and (\ref{eqn3.25}), respectively.

In summary, we propose the main algorithm of this paper for solving (\ref{eqn1.3}).
\begin{algorithm}
{\bf  A thick-restarted harmonic Arnoldi algorithm for the action of $\varphi$-functions on a vector~(TRHA)}\\
%---------------------------------------------------------------------------------------------------------------------------\\
 {{\bf Step 1.} Given the matrix $A$, the vector ${\bf v}$, the values of $t$ and $s$, a shift $\gamma$, as well as a convergence tolerance {\rm tol}. Specify $k$, the steps of the Arnoldi process, and $q$, the number of approximate eigenvectors which are retained from one cycle to the next; \\
 {\bf Step 2.} Run the $k$-step Arnoldi process to form $V_{k+1}$ and ${H}_{k}$. Compute the approximate solutions $\widehat{\bf y}_{\ell,k}(t)~(\ell=0,1,\ldots,s)$. If all the residual norms are below {\rm tol} {\rm(}see {\rm(}\ref{eqn3.5}{\rm)} and {\rm(}\ref{eqnn3.34}{\rm))}, then stop, else compute the primitive harmonic Ritz pairs $(\widetilde{\lambda}_i,\widetilde{\psi}_i),~i=1,2,\ldots,k$, and select $q$ of them. Go to {\bf Step 4};\\
 {\bf Step 3.} Run the remaining $(k-q)$ steps of Arnoldi process to form $V_{k+1}$ and ${H}_{k}$, using the last column of $V_{q+1}$ as the initial vector.
 Update the approximation $\widehat{\bf y}_{\ell,k}(t)$ by solving {\rm(}\ref{eqn3.41}{\rm)} $(\ell=0,1,\ldots,s)$, if all the residual norms are below {\rm tol}
  {\rm(}see  {\rm(}\ref{eqn3.43}{\rm)}{\rm)},
 then stop, otherwise compute primitive Ritz pairs $(\widetilde{\lambda}_i,\widetilde{\psi}_i),~i=1,2,\ldots,k$, and select $q$ of them; \\
 {\bf Step 4.} Orthonormalize the $\{\widetilde{\bf \psi}_{i}\}'s,~i=1,2,\ldots,q$, to form a real $k$-by-$q$ matrix $W_{q}=[\widetilde{\bf \psi}_{1},\widetilde{\bf \psi}_{2},\ldots,\widetilde{\bf \psi}_{q}]$. If $\widetilde{\bf \psi}_{i}$ is
 complex, separate it into the real part and the imaginary part, both parts should be included, and adjust $q$ if necessary {\rm(}increasing or decreasing $q$ by 1{\rm)};\\
 {\bf Step 5.} Extend $W_{q}$ to a $(k+1) \times q$ matrix $\widehat{W}_{q}=[W_{q };~{\bf 0}]$, where {\bf 0} is a zero row vector. Let
 $W_{q+1}=[\widehat{W}_{p},~{\bf w}_{k}(t)/\|{\bf w}_{k}(t)\|_2]$. Then orthonormalize the columns of $W_{q+1}$ to yield an orthonormal matrix with size $(k+1)\times(q+1);$\\
 {\bf Step 6.} Form the portions of new $\bar{H}_{k}$ and $V_{k+1}$ by using the old $\bar{H}_{k}$ and $V_{k+1}$: Let $\bar{H}_{q}^{new}=W_{q+1}^{\rm H}\bar{H}_{k}W_{q}$ and
 $V_{q+1}^{new}=V_{k+1}W_{q+1}$, then set $\bar{H}_{q}=\bar{H}_{q}^{new}$ and $V_{q+1}=V_{q+1}^{new}$. Go to {\bf Step 3}.
}
\end{algorithm}

\begin{rem}
Two remarks are in order. First, since the residuals $\widehat{\bf r}_{\ell,k}(t)~(\ell=0,1,2,\ldots,s)$ are colinear with each other, one can solve the vectors
$\varphi_{\ell}(-tA){\bf v}~(\ell=0,1,\ldots,s)$
simultaneously, and compute them in the same search subspace.
Second, as a by-product, we can also present a thick-restarted Arnoldi algorithm for the $\varphi$-functions. The difference is that one evaluates the Arnoldi approximations ${\bf y}_{\ell,k}(t)$ via the orthogonal projection technique {\rm(}\ref{eqn2.5}{\rm)} and {\rm(}\ref{eqn2.18}{\rm)}, and augments the search subspace with the Ritz vectors rather than harmonic Ritz vectors.
\end{rem}

\section{Relationship between the error and the residual of the approximations}

\setcounter{equation}{0}

In this section, we investigate the relationship between the error and the residual of the (harmonic) Arnoldi approximation for $\varphi$-functions.
Let $\Gamma$ be a closed contour that encloses
the spectra of $-tA$ and $-tH_k$. Let ${\bf y}(t)=\varphi_{\ell}(-tA){\bf v}$ and let ${\bf y}_{\ell,k}(t)=V_{k}\varphi_{\ell}(-t H_{k})\beta{\bf e}_{1}$ be the approximation from the Arnoldi method, where ${\bf y}_{0,k}(t)\equiv{\bf y}_k(t)$ is the Arnoldi approximation for matrix exponential. If $\varphi_{\ell}~(\ell\geq 0)$ are analytic on and inside the closed contour $\Gamma$, from the Dunford-Taylor integral representation \cite{Higham}, we obtain
\begin{equation}\label{equ61}
{\bf y}(t)=\varphi_{\ell}(-tA){\bf v}=\frac{1}{2\pi \bf i}{\int_\Gamma \varphi_{\ell}(z)(zI+tA)^{-1}}{\bf v}dz,\quad \ell=0,1,2,\ldots
\end{equation}
where ${\bf i}^{2}=-1$. Moreover,
\begin{equation}\label{equ62}
{\bf y}_{\ell,k}(t)=V_{k}\varphi_{\ell}(-tH_{k})\beta{\bf e}_{1}=\frac{1}{2\pi \bf i}{\int_\Gamma \varphi_{\ell}(z)V_{k}(zI+t H_{k})^{-1}}\beta{\bf e}_{1}dz,\quad \ell=0,1,2,\ldots
\end{equation}

We have the following theorem on the Arnoldi approximation for $\varphi$-functions.
\begin{theorem}
 Denote by ${\bf e}_{\ell,k}(t)={\bf y}(t)-{\bf y}_{\ell,k}(t)$ the error, and by ${\bf r}_{\ell,k}(t)$ the residual with respect to ${\bf y}_{\ell,k}(t)$, where ${\bf r}_{0,k}(t)\equiv{\bf r}_k(t)$ is the residual of the Arnoldi approximation to matrix exponential.
Assume that ${\bf e}_{k}^{\rm H}\varphi_{\ell}(-t H_{k})\beta{\bf e}_{1}\neq 0$, and denote
$$
f_{H_{k}}(z)=t\varphi_{\ell}(z)\cdot\frac{{\bf e}_{k}^{\rm H}(zI+t H_{k})^{-1}\beta{\bf e}_{1}}{{\bf e}_{k}^{\rm H}\varphi_{\ell}(-t H_{k})\beta{\bf e}_{1}}.
$$
Then we have
$$
{\bf e}_{\ell,k}(t)=\frac{1}{2\pi \bf i}{\int_\Gamma f_{H_{k}}(z)(zI+tA)^{-1}}\cdot{\bf r}_{\ell,k}(t)dz,
$$
and
\begin{equation}
\|{\bf e}_{\ell,k}(t)\|_2 \leq \Big\| \frac{1}{2\pi \bf i}{\int_\Gamma f_{H_{k}}(z)(zI+tA)^{-1}}dz\Big\|_2\cdot\|{\bf r}_{\ell,k}(t)\|_2.
\end{equation}
\end{theorem}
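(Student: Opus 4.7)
The plan is to obtain an integral representation for the error ${\bf e}_{\ell,k}(t)$ that separates analytic and matrix-dependent pieces, then convert the ``difference of resolvents'' that appears into a multiple of the residual ${\bf r}_{\ell,k}(t)$ using the Arnoldi relation and Proposition~2.1.

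First I would subtract the Dunford--Taylor representations (\ref{equ61}) and (\ref{equ62}) to write
\[
{\bf e}_{\ell,k}(t)=\frac{1}{2\pi\mathbf{i}}\int_\Gamma \varphi_\ell(z)\bigl[(zI+tA)^{-1}{\bf v}-V_k(zI+tH_k)^{-1}\beta{\bf e}_1\bigr]dz.
\]
The core task is therefore to simplify the bracketed quantity. Multiplying the Arnoldi relation (\ref{equ22}) by $t$ and adding $zV_k$ gives
\[
(zI+tA)V_k=V_k(zI+tH_k)+t\,h_{k+1,k}{\bf v}_{k+1}{\bf e}_k^{\rm H}.
\]
Applying $(zI+tA)^{-1}$ on the left and $(zI+tH_k)^{-1}\beta{\bf e}_1$ on the right, and using $V_k\beta{\bf e}_1={\bf v}$, I obtain
\[
V_k(zI+tH_k)^{-1}\beta{\bf e}_1=(zI+tA)^{-1}{\bf v}+t\,h_{k+1,k}\bigl[{\bf e}_k^{\rm H}(zI+tH_k)^{-1}\beta{\bf e}_1\bigr](zI+tA)^{-1}{\bf v}_{k+1},
\]
so that the bracketed integrand becomes $-t\,h_{k+1,k}[{\bf e}_k^{\rm H}(zI+tH_k)^{-1}\beta{\bf e}_1](zI+tA)^{-1}{\bf v}_{k+1}$.

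Next I would invoke Proposition~2.1, which asserts ${\bf r}_{\ell,k}(t)=-h_{k+1,k}\bigl[{\bf e}_k^{\rm H}\varphi_\ell(-tH_k)\beta{\bf e}_1\bigr]{\bf v}_{k+1}$. Under the hypothesis ${\bf e}_k^{\rm H}\varphi_\ell(-tH_k)\beta{\bf e}_1\neq 0$, this lets me solve for ${\bf v}_{k+1}$ as a scalar multiple of ${\bf r}_{\ell,k}(t)$. Substituting into the integral and pulling ${\bf r}_{\ell,k}(t)$ (which is independent of $z$) outside yields
\[
{\bf e}_{\ell,k}(t)=\frac{1}{2\pi\mathbf{i}}\int_\Gamma \frac{t\,\varphi_\ell(z)\,{\bf e}_k^{\rm H}(zI+tH_k)^{-1}\beta{\bf e}_1}{{\bf e}_k^{\rm H}\varphi_\ell(-tH_k)\beta{\bf e}_1}(zI+tA)^{-1}{\bf r}_{\ell,k}(t)\,dz,
\]
which is the desired identity with $f_{H_k}(z)$ as defined in the statement.

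The norm bound is then immediate: taking $\|\cdot\|_2$ on both sides, moving the norm inside the integral in the usual manner, and factoring ${\bf r}_{\ell,k}(t)$ out through submultiplicativity gives $\|{\bf e}_{\ell,k}(t)\|_2\leq \bigl\|\frac{1}{2\pi\mathbf{i}}\int_\Gamma f_{H_k}(z)(zI+tA)^{-1}dz\bigr\|_2\cdot\|{\bf r}_{\ell,k}(t)\|_2$. I do not expect any genuine obstacle; the only place requiring care is the manipulation of the Arnoldi identity in resolvent form, where the ordering of factors and the identification $V_k\beta{\bf e}_1={\bf v}$ must be handled precisely so that the scalar $[{\bf e}_k^{\rm H}(zI+tH_k)^{-1}\beta{\bf e}_1]$ separates cleanly from $(zI+tA)^{-1}{\bf v}_{k+1}$.
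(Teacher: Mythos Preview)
Your proposal is correct and follows essentially the same approach as the paper: subtract the two Dunford--Taylor representations, use the Arnoldi relation in resolvent form to reduce the difference of resolvents to a scalar multiple of $(zI+tA)^{-1}{\bf v}_{k+1}$, and then invoke Proposition~2.1 to convert ${\bf v}_{k+1}$ into a multiple of ${\bf r}_{\ell,k}(t)$. The only cosmetic difference is that the paper factors the bracketed term as $(zI+tA)^{-1}\bigl[{\bf v}-(zI+tA)V_k(zI+tH_k)^{-1}\beta{\bf e}_1\bigr]$ before applying the Arnoldi relation, whereas you apply inverses directly to both sides of the shifted Arnoldi identity; the resulting expressions are identical.
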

\begin{proof}
It follows that
\begin{eqnarray*}
% \nonumber to remove numbering (before each equation)
{\bf e}_{\ell,k}(t)
&=& {\bf y}(t)-{\bf y}_{\ell,k}(t)\nonumber \\
&=& \frac{1}{2\pi \bf i}{\int_\Gamma \varphi_{\ell}(z)\big[(zI+tA)^{-1}{\bf v}-V_{k}(zI+t H_{k})^{-1}}\beta{\bf e}_{1}\big]dz,
\end{eqnarray*}
and
$$
(zI+tA)^{-1}{\bf v}-V_{k}(zI+t H_{k})^{-1}\beta{\bf e}_{1}=(zI+tA)^{-1}\big[{\bf v}-(zI+tA)V_{k}(zI+t H_{k})^{-1}\beta{\bf e}_{1}\big].
$$
From the Arnoldi relation (\ref{equ22}), we get
\begin{equation*}
{\bf v}-(zI+tA)V_{k}(zI+t H_{k})^{-1}\beta{\bf e}_{1}=-th_{k+1,k}\big[{\bf e}_{k}^{\rm H}(zI+t H_{k})^{-1}\beta{\bf e}_{1}\big]{\bf v}_{k+1}.
\end{equation*}
Recall that ${\bf r}_{\ell,k}(t)=-h_{k+1,k}\big[{\bf e}_{k}^{\rm H}\varphi_{\ell}(-t H_{k})\beta{\bf e}_{1}\big]{\bf v}_{k+1}$, thus
\begin{eqnarray}\label{equ66-2}
{\bf e}_{\ell,k}(t)
&=& \frac{1}{2\pi \bf i}{\int_\Gamma \varphi_{\ell}(z)}(zI+tA)^{-1} \Big[-t h_{k+1,k}\big[{\bf e}_{k}^{\rm H}(zI+t H_{k})^{-1}\beta{\bf e}_{1}\big]{\bf v}_{k+1}\Big]dz \\
&=& \frac{1}{2\pi \bf i}{\int_\Gamma \varphi_{\ell}(z)(zI+tA)^{-1}\frac{{\bf e}_{k}^{\rm H}(zI+t H_{k})^{-1}\beta{\bf e}_{1}}{{\bf e}_{k}^{\rm H}\varphi_{\ell}(-t H_{k})\beta{\bf e}_{1}}}\Big[-t h_{k+1,k}\big[{\bf e}_{k}^{\rm H}\varphi_{\ell}(-t H_{k})\beta{\bf e}_{1}\big]{\bf v}_{k+1}\Big]dz \nonumber \\
&=&\frac{1}{2\pi \bf i}{\int_\Gamma t\varphi_{\ell}(z)\frac{{\bf e}_{k}^{\rm H}(zI+t H_{k})^{-1}\beta{\bf e}_{1}}{{\bf e}_{k}^{\rm H}\varphi_{\ell}(-t H_{k})\beta{\bf e}_{1}}}\cdot(zI+tA)^{-1}{\bf r}_{\ell,k}(t)dz.\nonumber
\end{eqnarray}
Let $f_{H_{k}}(z)=t\varphi_{\ell}(z)\frac{{\bf e}_{k}^{\rm H}(zI+t H_{k})^{-1}\beta{\bf e}_{1}}{{\bf e}_{k}^{\rm H}\varphi_{\ell}(-t H_{k})\beta{\bf e}_{1}}$, and notice that
\begin{eqnarray}
\frac{1}{2\pi \bf i}{\int_\Gamma t\varphi_{\ell}(z)}\frac{{\bf e}_{k}^{\rm H}(zI+t H_{k})^{-1}\beta{\bf e}_{1}}{{\bf e}_{k}^{\rm H}\varphi_{\ell}(-t H_{k})\beta{\bf e}_{1}}dz&=&\frac{1}{2\pi \bf i}{\int_\Gamma f_{H_{k}}(z)dz}=t.
\end{eqnarray}
Therefore, we have from (\ref{equ66-2}) that
\begin{eqnarray*}\label{equ63}
\|{\bf e}_{\ell,k}(t)\|_2 &=&\Big\| \frac{1}{2\pi \bf i}{\int_\Gamma f_{H_{k}}(z)(zI+tA)^{-1}}dz\cdot{\bf r}_{\ell,k}(t)\Big\|_2 \nonumber \\
&\leq& \Big\| \frac{1}{2\pi \bf i}{\int_\Gamma f_{H_{k}}(z)(zI+tA)^{-1}}dz\Big\|_2\cdot\|{\bf r}_{\ell,k}(t)\|_2. \nonumber
\end{eqnarray*}
\end{proof}

Now we focus on error estimates of a class of special matrices.
Denote the numerical range of $A$ by $W(A)$, i.e.,
$$W(A)=\Big\{\frac{(A{\bf x},{\bf x})}{({\bf x},{\bf x})},{\bf 0}\neq{\bf x}\in \mathbb{C}^{n}\Big\},$$
where $(\cdot,\cdot)$ represents the Euclidean inner product. Note that $W(H_{k})\subseteq W(A)$ holds for each $k$.
For $a\geq 0$ and $0\leq \vartheta \leq \pi/2$, we define the set
$$
\Sigma_{\vartheta,a}=\big\{z\in \mathbb{C}:\big|{\rm arg}(z-a)\big|\leq \vartheta \big\},
$$
where ${\rm arg}(\cdot)$ denotes the argument of a complex number; and assume that
\begin{eqnarray}\label{equ64}
W(A)\subset \Sigma_{\vartheta,a}.
\end{eqnarray}
It is known that this assumption holds in important applications such as parabolic partial differential equations.

Similar to \cite{FIS-2008}, our analysis is based on the integral representation \cite{FIS-2008,EN2000}
$$
{\bf y}(t)=\frac{1}{t^{\ell}}\lim_{N\rightarrow \infty}\frac{1}{2\pi {\bf i}}\int_{\varepsilon-{\bf i}N}^{\varepsilon+{\bf i}N}\frac{\exp(tz)}{z^{\ell}}(z I+A)^{-1}{\bf v}dz,\quad\ell=0,1,2,\ldots,
$$
which, under our assumptions, holds for every $\varepsilon>0$ with uniform convergence when $t$ is chosen in compact intervals of $(0,+\infty)$.
Given $\varepsilon>0$, if we set $z=\varepsilon +{\bf i}\rho$, then \cite{FIS-2008}
$${\bf y}(t)=\frac{\exp(t\varepsilon)}{t^{\ell}}\lim_{N\rightarrow \infty}\frac{1}{2\pi}\int_{-N}^{+N}\frac{\exp({\bf i}t\rho)}{(\varepsilon +{\bf i}\rho)^{\ell}}\big((\varepsilon +{\bf i}\rho) I+A\big)^{-1}{\bf v}d\rho.$$

Suppose that $A\in\mathbb{R}^{n\times n}$ is a real matrix. Consider $\mu_{j}=a_{j}+{\bf i}b_{j}$ ($j=1,2,\ldots,k$) the eigenvalues of matrix $H_{k}$ arranging as $\mu_{1},\ldots,\mu_{k_{1}}$ the real ones and $\mu_{k_{1}+1},\ldots,\mu_{k}$ the
complex conjugate ones. Let
$$
r_{j}=\big((\varepsilon+a_{j})^{2}+b_{j}^{2}\big)^{1/2},\quad R=\max_{1\leq j \leq k}r_{j},
$$
and
$$\omega_{k}(\varepsilon)=\prod_{j=1}^{k}\big(r_{j}(\varepsilon+a_{j})\big)^{1/2}.$$
Define
$$\varsigma_{k}(\rho)=\prod_{j=1}^{k_{1}}(1+\rho^{2}/r_{j}^{2})^{1/2}\prod_{j=k_{1}+1}^{k}(1+\rho^{2}/r_{j}^{2})^{1/4},$$
and
\begin{equation}\label{eqn4.7}
d_{k}(\rho)=\frac{\prod_{j=1}^{k-1}h_{j+1,j}}{\omega_{k}(\varepsilon)\varsigma_{k}(\rho)}.
\end{equation}
Under the above assumptions, we can prove the following result whose proof is along the line of Proposition 5 of \cite{FIS-2008}.
\begin{theorem}\label{theo4.2}
Let $\varepsilon>0$ and suppose $k+k_{1}+2\ell\geq 4$. Then for the Arnoldi approximation, we have
\begin{equation}\label{equ64-1}
\|{\bf e}_{\ell,k}(t)\|_{2}\leq c_{\ell,k}\cdot\int_{0}^{\infty}\frac{(1+\rho^{2}/\varepsilon^{2})^{-\ell/2}}{\varsigma_{k}(\rho)}d\rho\cdot\|{\bf r}_{\ell,k}(t)\|_{2}
\end{equation}
where $c_{\ell,k}=\frac{\exp(t\varepsilon)}{\pi (t\varepsilon)^{\ell}\omega_{k}(\varepsilon)(\varepsilon+a)}\cdot\frac{\prod_{j=1}^{k-1}h_{j+1,j}}{|{\bf e}_{k}^{\rm H}\varphi_{\ell}(-t H_{k}){\bf e}_{1}|}$.
\end{theorem}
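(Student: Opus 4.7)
The plan is to adapt the approach of Proposition 5 in \cite{FIS-2008} to the $\varphi$-function setting, using Theorem 4.1 as the bridge between error and residual. First, I would start from the Bromwich-type integral representation
$$
{\bf y}(t)=\frac{1}{t^{\ell}}\lim_{N\to\infty}\frac{1}{2\pi{\bf i}}\int_{\varepsilon-{\bf i}N}^{\varepsilon+{\bf i}N}\frac{\exp(tz)}{z^{\ell}}(zI+A)^{-1}{\bf v}\,dz,
$$
recalled just before the statement, and write the analogous representation for ${\bf y}_{\ell,k}(t)$ by replacing $A$ with $H_k$ and ${\bf v}$ with $\beta{\bf e}_1$ on the Krylov side. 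Subtracting, and invoking the Arnoldi relation (\ref{equ22}) exactly as in the proof of Theorem 4.1, the difference $(zI+A)^{-1}{\bf v}-V_k(zI+tH_k)^{-1}\beta{\bf e}_1$ factors through the residual direction $\mathbf{v}_{k+1}$ and can be rewritten as a scalar multiple of $(zI+A)^{-1}{\bf r}_{\ell,k}(t)/|{\bf e}_k^{\mathrm H}\varphi_\ell(-tH_k)\beta{\bf e}_1|$.

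Next, I would parameterize $z=\varepsilon+{\bf i}\rho$ and bound the integrand factor by factor. The resolvent obeys $\|(zI+A)^{-1}\|_2\le 1/(\varepsilon+a)$ by the standard numerical-range estimate applied to the hypothesis $W(A)\subset\Sigma_{\vartheta,a}$. The scalar $|{\bf e}_k^{\mathrm H}(zI+tH_k)^{-1}\beta{\bf e}_1|$ is computed by Cramer's rule on the upper-Hessenberg matrix $zI+tH_k$: its numerator is, up to sign, $\beta\prod_{j=1}^{k-1}(th_{j+1,j})$, while its denominator is $\det(zI+tH_k)=\prod_{j=1}^{k}(z+t\mu_j)$, with $\mu_j=a_j+{\bf i}b_j$ the eigenvalues of $H_k$. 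The moduli $|z+t\mu_j|=\bigl((\varepsilon+ta_j)^2+(\rho+tb_j)^2\bigr)^{1/2}$ are bounded from below so as to reproduce $\omega_k(\varepsilon)$ at $\rho=0$ and $\varsigma_k(\rho)$ at large $\rho$; here the different exponents $1/2$ versus $1/4$ in the definition of $\varsigma_k$ arise because complex-conjugate eigenvalue pairs jointly yield one full $(1+\rho^2/r_j^2)^{1/2}$ factor, split as two quarter-power contributions. Finally, $|\exp(tz)/z^\ell|=\exp(t\varepsilon)/|\varepsilon+{\bf i}\rho|^\ell$ produces the prefactor $\exp(t\varepsilon)/(t\varepsilon)^\ell$ together with the $(1+\rho^2/\varepsilon^2)^{-\ell/2}$ that remains under the integral.

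Collecting these estimates, pulling $\|{\bf r}_{\ell,k}(t)\|_2$ outside the integral, and dividing by the normalizing denominator $|{\bf e}_k^{\mathrm H}\varphi_\ell(-tH_k){\bf e}_1|$ that comes from rewriting the difference in terms of ${\bf r}_{\ell,k}(t)$, reassembles the constant $c_{\ell,k}$ and the kernel $(1+\rho^2/\varepsilon^2)^{-\ell/2}/\varsigma_k(\rho)$ exactly as in (\ref{equ64-1}).

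The main obstacle is analytic rather than algebraic. One must justify the contour deformation from the compact Dunford--Taylor contour $\Gamma$ of Theorem 4.1 onto the unbounded Bromwich line $\mathrm{Re}(z)=\varepsilon$, interchange the limit $N\to\infty$ with norm and integral, and verify absolute integrability at infinity. A short asymptotic count gives $\varsigma_k(\rho)\sim\rho^{(k+k_1)/2}$ and $(1+\rho^2/\varepsilon^2)^{-\ell/2}\sim\rho^{-\ell}$ as $\rho\to\infty$, so the integrand decays like $\rho^{-(k+k_1)/2-\ell}$; the hypothesis $k+k_1+2\ell\ge 4$ is precisely what ensures absolute convergence with one extra power of decay to spare, which is also enough to close the Bromwich contour at infinity and carry out the deformation rigorously.
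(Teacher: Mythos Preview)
Your approach is essentially the paper's: start from the Bromwich integral, subtract the Krylov approximation, use the Arnoldi relation to factor through ${\bf v}_{k+1}$ and hence through ${\bf r}_{\ell,k}(t)$, then bound the resolvent via the numerical-range estimate and the Hessenberg scalar via Cramer's rule (the paper quotes these as Lemma~2 and Proposition~5 of \cite{FIS-2008}), and your discussion of the convergence condition $k+k_1+2\ell\ge4$ is more explicit than the paper's single closing sentence. One bookkeeping caution: the Bromwich representation used here carries the resolvent $(zI+A)^{-1}$ with \emph{no} $t$, whereas you imported the Theorem~4.1 parameterization and wrote $(zI+tH_k)^{-1}$, $th_{j+1,j}$, and $|z+t\mu_j|$; drop those stray $t$'s or the constants will not match the stated $r_j$, $\omega_k(\varepsilon)$, $\varsigma_k(\rho)$, and $c_{\ell,k}$.
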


\begin{proof}
It follows that
\begin{eqnarray*}
% \nonumber to remove numbering (before each equation)
{\bf e}_{\ell,k}(t)
&=& {\bf y}(t)-{\bf y}_{\ell,k}(t)\nonumber \\
&=& \frac{\exp(t\varepsilon)}{t^{\ell}}\lim_{N\rightarrow \infty}\frac{1}{2\pi}{\int_{-N}^{+N} \frac{\exp({\bf i}t\rho)}{(\varepsilon +{\bf i}\rho)^{\ell}}\Big[\big((\varepsilon +{\bf i}\rho)I+A\big)^{-1}{\bf v}-V_{k}\big((\varepsilon +{\bf i}\rho)I+H_{k}\big)^{-1}}\beta{\bf e}_{1}\Big]d\rho,
\end{eqnarray*}
and
$$\big((\varepsilon +{\bf i}\rho)I+A\big)^{-1}{\bf v}-V_{k}\big((\varepsilon +{\bf i}\rho)I+H_{k}\big)^{-1}\beta{\bf e}_{1}$$
$$=\big((\varepsilon +{\bf i}\rho)I+A\big)^{-1}\Big[{\bf v}-\big((\varepsilon +{\bf i}\rho)I+A\big)V_{k}\big((\varepsilon +{\bf i}\rho)I+H_{k}\big)^{-1}\beta{\bf e}_{1}\Big].$$
From the Arnoldi relation (\ref{equ22}), we obtain
\begin{equation*}
{\bf v}-\big((\varepsilon +{\bf i}\rho)I+A\big)V_{k}\big((\varepsilon +{\bf i}\rho)I+H_{k}\big)^{-1}\beta{\bf e}_{1}=-h_{k+1,k}\Big[{\bf e}_{k}^{\rm H}\big((\varepsilon +{\bf i}\rho)I+H_{k}\big)^{-1}\beta{\bf e}_{1}\Big]{\bf v}_{k+1}.
\end{equation*}
Recall that ${\bf r}_{\ell,k}(t)=-h_{k+1,k}\big[{\bf e}_{k}^{\rm H}\varphi_{\ell}(-t H_{k})\beta{\bf e}_{1}\big]{\bf v}_{k+1}$, thus
\begin{equation}\label{equ65}
{\bf e}_{\ell,k}(t)=\frac{\exp(t\varepsilon)}{t^{\ell}}\lim_{N\rightarrow \infty}\frac{1}{2\pi}\int_{-N}^{+N}\frac{\exp({\bf i}t\rho)}{(\varepsilon +{\bf i}\rho)^{\ell}}\big((\varepsilon +{\bf i}\rho)I+A\big)^{-1}\frac{{\bf e}_{k}^{\rm H}\big((\varepsilon +{\bf i}\rho)I+H_{k}\big)^{-1}{\bf e}_{1}}{{\bf e}_{k}^{\rm H}\varphi_{\ell}(-t H_{k}){\bf e}_{1}}{\bf r}_{\ell,k}(t)d\rho.
\end{equation}

It follows from Lemma 2 of \cite{FIS-2008} that
\begin{equation}\label{eqnn49}
\big|{\bf e}_{k}^{\rm H}\big((\varepsilon +{\bf i}\rho)I+H_{k}\big)^{-1}{\bf e}_{1}\big|=\Big|{\rm det}\big((\varepsilon +{\bf i}\rho)I+H_{k}\big)^{-1}\prod_{j=1}^{k-1}h_{j+1,j}\Big|.
\end{equation}
Moreover, we have from Proposition 5 of \cite{FIS-2008} that
%\begin{equation}\label{eqn4.9}
%\big|{\rm det}\big((\varepsilon +{\bf i}\rho)I+H_{k}\big)\big|\geq \omega_{k}(\varepsilon)\varsigma_{k}(\varepsilon),
%\end{equation}
\begin{equation}\label{equ66}
\big|{\bf e}_{k}^{\rm H}\big((\varepsilon +{\bf i}\rho)I+H_{k}\big)^{-1}{\bf e}_{1}\big|\leq d_{k}(\rho),
\end{equation}
and
\begin{equation}\label{equ67}
\|\big((\varepsilon +{\bf i}\rho)I+A\big)^{-1}\|_{2}\leq (\varepsilon+a)^{-1}.
\end{equation}
%which arises from the inequality $\|(z I+A)^{-1}\|_{2}\leq dist(-z,W(A))^{-1}$ and by assumption (\ref{equ64}).
By means of (\ref{eqnn49})--(\ref{equ67}), we obtain from (\ref{eqn4.7}) and (\ref{equ65}) that
$$\|{\bf e}_{\ell,k}(t)\|_{2}\leq \frac{\exp(t\varepsilon)}{\pi t^{\ell}
(\varepsilon+a)}\cdot\frac{1}{|{\bf e}_{k}^{\rm H}\varphi_{\ell}(-t H_{k}){\bf e}_{1}|}\cdot\int_{0}^{\infty}\frac{d_{k}(\rho)}{(\varepsilon^{2}+\rho^{2})^{\ell/2}}d\rho\cdot\|{\bf r}_{\ell,k}(t)\|_{2}$$
$$=c_{\ell,k}\cdot\int_{0}^{\infty}\frac{(1+\rho^{2}/\varepsilon^{2})^{-\ell/2}}{\varsigma_{k}(\rho)}d\rho\cdot\|{\bf r}_{\ell,k}(t)\|_{2}.$$
Thus (\ref{equ64-1}) is proved. We notice that if $k+k_{1}+2\ell\geq 4$, then the integral in (\ref{equ64-1}) converges.
\end{proof}

The following result avoids the use of a quadrature rule for evaluating the integral in (\ref{equ64-1}). The proof is similar to Proposition 6 of \cite{FIS-2008} and is omitted.
\begin{theorem}\label{theo4.3}
Let $\varepsilon>0$ and suppose that $k+k_{1}\geq 4$. Under the above notations, we have
$$\|{\bf e}_{\ell,k}(t)\|_{2}\leq c_{\ell,k}\cdot C_{k}\cdot\|{\bf r}_{\ell,k}(t)\|_{2},$$
where
$$C_{k}=\frac{\sqrt{\pi}}{2\sqrt{S_{1}}}+\exp\big(-\varepsilon^{2}S_{2}\big)
(R-\varepsilon)+\Big(\frac{\varepsilon}{\sqrt{\varepsilon^{2}+R^{2}}}\Big)^{\ell}\frac{\pi R}{2^{(\frac{k+k_{1}}{4}+1)}},$$
with
$$S_{1}=\frac{\ell}{4\varepsilon^{2}}+\frac{1}{2}\sum_{j=1}^{k_{1}}\frac{1}{r_{j}^{2}+
\varepsilon^{2}}+\frac{1}{4}\sum_{j=k_{1}+1}^{k}\frac{1}{r_{j}^{2}+\varepsilon^{2}},$$
and
$$S_{2}=\frac{\ell}{2(\varepsilon^{2}+R^{2})}+\frac{1}{2}\sum_{j=1}^{k_{1}}\frac{1}{r_{j}^{2}+R^{2}}+\frac{1}{4}\sum_{j=k_{1}+1}^{k}\frac{1}{r_{j}^{2}+R^{2}}.$$
\end{theorem}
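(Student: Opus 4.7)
The plan is to start from the bound of Theorem \ref{theo4.2} and estimate the integral $I:=\int_0^\infty \frac{(1+\rho^2/\varepsilon^2)^{-\ell/2}}{\varsigma_k(\rho)}\,d\rho$ by splitting the range into the three pieces $[0,\varepsilon]$, $[\varepsilon,R]$, and $[R,\infty)$, each of which produces exactly one of the three summands comprising $C_k$. The key elementary device I would use throughout is the concavity inequality $\log(1+x)\ge \frac{x}{1+b}$ valid for $x\in[0,b]$, which follows from integrating $\frac{1}{1+s}\ge\frac{1}{1+b}$ on $[0,x]$.

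On the first piece $[0,\varepsilon]$ I apply the above inequality with $x=\rho^2/r_j^2$ and $b=\varepsilon^2/r_j^2$ (so that $\log(1+\rho^2/r_j^2)\ge \rho^2/(r_j^2+\varepsilon^2)$) for each $j=1,\dots,k$, and with $x=\rho^2/\varepsilon^2$ and $b=1$ for the $(1+\rho^2/\varepsilon^2)^{-\ell/2}$ factor. Combining the resulting exponential bounds and the exponents $1/2, 1/4, \ell/2$ present in the integrand yields
\[
\frac{(1+\rho^2/\varepsilon^2)^{-\ell/2}}{\varsigma_k(\rho)}\le \exp(-\rho^2 S_1)\quad\text{for }\rho\in[0,\varepsilon].
\]
Extending the integration to $[0,\infty)$ and using the Gaussian integral gives the first summand $\sqrt{\pi}/(2\sqrt{S_1})$. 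On the second piece $[\varepsilon,R]$ I repeat the same argument, but now with $b=R^2/r_j^2$ (respectively $b=R^2/\varepsilon^2$), producing the bound $\exp(-\rho^2 S_2)$, and then use the trivial monotonicity estimate $\exp(-\rho^2 S_2)\le\exp(-\varepsilon^2 S_2)$ together with $\mathrm{length}([\varepsilon,R])=R-\varepsilon$ to obtain the second summand.

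For the tail $[R,\infty)$, the exponential trick is too crude, so I would instead bound each $r_j\le R$ to get $\varsigma_k(\rho)^{-1}\le(1+\rho^2/R^2)^{-(k+k_1)/4}$, and use $(1+\rho^2/\varepsilon^2)^{-\ell/2}\le(\varepsilon/\sqrt{\varepsilon^2+R^2})^\ell$ since $\rho\ge R$. The substitution $u=\rho/R$ reduces the remaining integral to $R\int_1^\infty(1+u^2)^{-(k+k_1)/4}\,du$; here I exploit the hypothesis $k+k_1\ge 4$ by writing $(1+u^2)^{-(k+k_1)/4}=(1+u^2)^{-1}\cdot(1+u^2)^{-(k+k_1)/4+1}$ and bounding the second factor by $2^{-(k+k_1)/4+1}$ on $\{u\ge 1\}$, since $(k+k_1)/4-1\ge 0$ and $1+u^2\ge 2$. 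What remains is $\int_1^\infty du/(1+u^2)=\pi/4$, and assembling the constants produces exactly the third summand $\bigl(\varepsilon/\sqrt{\varepsilon^2+R^2}\bigr)^\ell\,\pi R/2^{(k+k_1)/4+1}$.

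The main obstacle, in my view, is not any single estimate but the careful matching of constants: the tuning of the parameter $b$ in the concavity inequality on each subinterval must align with the denominators $r_j^2+\varepsilon^2$ and $r_j^2+R^2$ appearing in $S_1$ and $S_2$, and the exponents $1/2$, $1/4$, $\ell/2$ must be tracked so that the sum $S_1$ (resp.\ $S_2$) emerges verbatim. Once the three pieces are summed, the prefactor $c_{\ell,k}$ from Theorem \ref{theo4.2} is preserved, giving the claimed bound $\|{\bf e}_{\ell,k}(t)\|_2\le c_{\ell,k}\cdot C_k\cdot\|{\bf r}_{\ell,k}(t)\|_2$.
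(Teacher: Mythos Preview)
Your proposal is correct and follows essentially the same route as the paper's intended proof. The paper itself omits the argument, deferring to Proposition~6 of \cite{FIS-2008}, and that proposition proceeds exactly as you describe: split $[0,\infty)$ into $[0,\varepsilon]$, $[\varepsilon,R]$, $[R,\infty)$, use the elementary inequality $\log(1+x)\ge x/(1+b)$ on the first two pieces to produce the Gaussian-type bounds involving $S_1$ and $S_2$, and on the tail replace each $r_j$ by $R$ and peel off one factor $(1+u^2)^{-1}$ to reduce to $\int_1^\infty(1+u^2)^{-1}\,du=\pi/4$, which is where the hypothesis $k+k_1\ge4$ enters.
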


%\begin{proof}
%Let
%$$\Psi_{k}(\rho)=\frac{(1+\rho^{2}/\varepsilon^{2})^{-\ell/2}}{\varsigma_{k}(\rho)},$$
%we approximate the integral in (\ref{equ64-1}), exploiting the relationship \cite{FIS-2008}
%\begin{equation}\label{equ68}
%\Psi_{k}(\rho)\leq \exp\big(-\rho^{2}(\frac{\ell}{2(\varepsilon^{2}+\rho^{2})}+\frac{1}{2}\sum_{j=1}^{k_{1}}\frac{1}{r_{j}^{2}+\rho^{2}}+\frac{1}{4}\sum_{j=k_{1}+1}^{k}\frac{1}{r_{j}^{2}+\rho^{2}})\big),
%\end{equation}
%which comes form $1-x\leq \exp(-x),~0\leq x\leq 1$.
%
%At first, we have from (\ref{equ68})
%$$\int_{0}^{\varepsilon}\Psi_{k}(\rho)d\rho\leq \int_{0}^{\varepsilon}\exp(-\rho^{2}S_{k}^{(1)})d\rho$$
%$$=\frac{1}{\sqrt{S_{k}^{(1)}}}\int_{0}^{\varepsilon\sqrt{S_{k}^{(1)}}}\exp(-x^{2})dx\leq \frac{\sqrt{\pi}}{2\sqrt{S_{k}^{(1)}}}.$$
%Furthermore (\ref{equ68}) yields
%$$\int_{\varepsilon}^{R}\Psi_{k}(\rho)d\rho\leq \int_{\varepsilon}^{R}\exp(-\rho^{2}S_{k}^{(2)})d\rho\leq \exp(-\varepsilon^{2}S_{k}^{(2)})(R-\varepsilon).$$
%Finally it can be verified that, for $k+k_{1}\geq 4$, it holds
%$$\int_{R}^{+\infty}\Psi_{k}(\rho)d\rho\leq (\frac{\varepsilon}{\sqrt{\varepsilon^{2}+R^{2}}})^{\ell}\int_{R}^{+\infty}\frac{1}{(1+\rho^{2}/R^{2})^{\frac{k+k_{1}}{4}}}d\rho  $$
%$$\leq(\frac{\varepsilon}{\sqrt{\varepsilon^{2}+R^{2}}})^{\ell}\frac{\pi R}{2^{(\frac{k+k_{1}}{4}+1)}}.$$
%This completes the proof.
%
%\end{proof}

Next, we consider the harmonic Arnoldi approximation.
Let $\widehat{\Gamma}$ be a closed contour that encloses
the spectra of $-tA$ and $-tT_k$. Let $\widehat{\bf y}_{\ell,k}(t)=V_{k}\varphi_{\ell}(-t T_{k})\beta{\bf e}_{1}$ be the approximation from the harmonic Arnoldi method, where $\widehat{\bf y}_{0,k}(t)\equiv\widehat{\bf y}_k(t)$ is the harmonic Arnoldi approximation for matrix exponential. If $\varphi_{\ell}~(\ell \geq 0)$ are analytic on and inside the closed contour $\widehat{\Gamma}$, we obtain from the Dunford-Taylor integral representation that
\begin{equation}
\widehat{\bf y}_{\ell,k}(t)=V_{k}\varphi_{\ell}(-t T_{k})\beta{\bf e}_{1}=\frac{1}{2\pi \bf i}{\int_{\widehat{\Gamma}} \varphi_{\ell}(z)V_{k}(zI+t T_{k})^{-1}}\beta{\bf e}_{1}dz,\quad \ell=0,1,2,\ldots
\end{equation}
We are ready to present the following theorem on the relation between the error and the residual of the harmonic Arnoldi approximation $\widehat{\bf y}_{\ell,k}(t)$.
\begin{theorem}
 Denote by $\widehat{\bf e}_{\ell,k}(t)={\bf y}(t)-\widehat{\bf y}_{\ell,k}(t)$ the error, and by $\widehat{\bf r}_{\ell,k}(t)$ the residual with respect to $\widehat{\bf y}_{\ell,k}(t)$, where $\widehat{\bf r}_{0,k}(t)\equiv\widehat{\bf r}_k(t)$ is the residual of the harmonic Arnoldi approximation to matrix exponential.
Assume that ${\bf e}_{k}^{\rm H}\varphi_{\ell}(-t T_{k})\beta{\bf e}_{1}\neq 0$, and define
$$
f_{T_{k}}(z)=t\varphi_{\ell}(z)\cdot\frac{{\bf e}_{k}^{\rm H}(zI+t T_{k})^{-1}\beta{\bf e}_{1}}{{\bf e}_{k}^{\rm H}\varphi_{\ell}(-t T_{k})\beta{\bf e}_{1}}.
$$
Then we have
$$
\widehat{\bf e}_{\ell,k}(t)=\frac{1}{2\pi \bf i}{\int_{\widehat{\Gamma}} f_{T_{k}}(z)(zI+tA)^{-1}}dz\cdot\widehat{\bf r}_{\ell,k}(t),
$$
and
\begin{equation}
\|\widehat{\bf e}_{\ell,k}(t)\|_2 \leq \Big\| \frac{1}{2\pi \bf i}{\int_{\widehat{\Gamma}} f_{T_{k}}(z)(zI+tA)^{-1}}dz\Big\|_2\cdot\|\widehat{\bf r}_{\ell,k}(t)\|_2.
\end{equation}
\end{theorem}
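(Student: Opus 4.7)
The plan is to mimic the proof of Theorem~4.1 line-by-line, with the classical Arnoldi matrix $H_k$ replaced by its rank-one update $T_k$, and to account for the modified ``Arnoldi-like'' identity that this substitution forces. The key observation is that the harmonic Arnoldi residual $\widehat{\bf r}_{\ell,k}(t)$ is still colinear with a single fixed vector in ${\rm span}\{V_{k+1}\}$ (see (\ref{eqnn3.34})), so the same multiply-and-divide trick that was used to factor out ${\bf e}_{k}^{\rm H}\varphi_{\ell}(-tH_{k})\beta{\bf e}_{1}$ will let us factor out ${\bf e}_{k}^{\rm H}\varphi_{\ell}(-tT_{k})\beta{\bf e}_{1}$ here.

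First, I would use the Dunford--Taylor representation (analogous to (\ref{equ61})--(\ref{equ62})) to write
$$
\widehat{\bf e}_{\ell,k}(t)=\frac{1}{2\pi{\bf i}}\int_{\widehat{\Gamma}}\varphi_{\ell}(z)\bigl[(zI+tA)^{-1}{\bf v}-V_{k}(zI+tT_{k})^{-1}\beta{\bf e}_{1}\bigr]dz,
$$
and then factor the integrand as $(zI+tA)^{-1}\bigl[{\bf v}-(zI+tA)V_{k}(zI+tT_{k})^{-1}\beta{\bf e}_{1}\bigr]$. The next step, which is the technical core of the argument, is to obtain the analogue of the Arnoldi reduction in terms of $T_k$. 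Using the definition (\ref{eqn3.3}) of $T_k$ together with the Arnoldi relation (\ref{equ22}), I would derive
$$
AV_{k}=V_{k}T_{k}-V_{k}{\bf g}\,{\bf e}_{k}^{\rm H}+h_{k+1,k}{\bf v}_{k+1}{\bf e}_{k}^{\rm H},\qquad {\bf g}=\gamma h_{k+1,k}^{2}(I+\gamma H_{k})^{\rm -H}{\bf e}_{k},
$$
and multiply by $(zI+tT_k)^{-1}\beta{\bf e}_1$ on the right to obtain
$$
{\bf v}-(zI+tA)V_{k}(zI+tT_{k})^{-1}\beta{\bf e}_{1}=t\bigl[V_{k}{\bf g}-h_{k+1,k}{\bf v}_{k+1}\bigr]\bigl[{\bf e}_{k}^{\rm H}(zI+tT_{k})^{-1}\beta{\bf e}_{1}\bigr].
$$

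Finally, I would multiply and divide by the nonzero scalar ${\bf e}_{k}^{\rm H}\varphi_{\ell}(-tT_{k})\beta{\bf e}_{1}$ inside the integrand. The quantity $\bigl[V_{k}{\bf g}-h_{k+1,k}{\bf v}_{k+1}\bigr]\cdot{\bf e}_{k}^{\rm H}\varphi_{\ell}(-tT_{k})\beta{\bf e}_{1}$ is, by (\ref{eqnn3.34}), exactly $\widehat{\bf r}_{\ell,k}(t)$; pulling this vector out of the integral (it does not depend on $z$) yields
$$
\widehat{\bf e}_{\ell,k}(t)=\frac{1}{2\pi{\bf i}}\int_{\widehat{\Gamma}}f_{T_{k}}(z)(zI+tA)^{-1}dz\;\cdot\;\widehat{\bf r}_{\ell,k}(t),
$$
and the stated norm inequality follows from submultiplicativity of the spectral norm.

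The main obstacle is getting the rank-one correction term ${\bf g}{\bf e}_{k}^{\rm H}$ bookkeeping right: unlike in the classical case, $T_k$ is not the Galerkin projection of $A$ onto ${\rm span}\{V_k\}$, so $AV_k\neq V_kT_k+h_{k+1,k}{\bf v}_{k+1}{\bf e}_k^{\rm H}$, and the modified identity above is what ultimately ensures the residual vector reassembles cleanly into $\widehat{\bf r}_{\ell,k}(t)$. Once this identity is in place, the rest of the proof is a cosmetic restatement of the argument of Theorem~4.1.
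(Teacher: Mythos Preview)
Your proposal is correct and follows essentially the same route as the paper's own proof: both use the Dunford--Taylor representation, reduce ${\bf v}-(zI+tA)V_{k}(zI+tT_{k})^{-1}\beta{\bf e}_{1}$ via (\ref{equ22}) and (\ref{eqn3.3}) to a scalar multiple of the fixed vector $V_{k}{\bf g}-h_{k+1,k}{\bf v}_{k+1}$, and then multiply-and-divide by ${\bf e}_{k}^{\rm H}\varphi_{\ell}(-tT_{k})\beta{\bf e}_{1}$ to recognize $\widehat{\bf r}_{\ell,k}(t)$ from (\ref{eqnn3.34}). The only cosmetic difference is that you package the key step as an explicit ``modified Arnoldi relation'' $AV_{k}=V_{k}T_{k}-V_{k}{\bf g}{\bf e}_{k}^{\rm H}+h_{k+1,k}{\bf v}_{k+1}{\bf e}_{k}^{\rm H}$, whereas the paper simply cites (\ref{equ22}) and (\ref{eqn3.3}) and writes out the two-term result directly.
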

\begin{proof}
From the Dunfold-Taylor representation, we have
\begin{equation}\label{equ61}
{\bf y}(t)=\varphi_{\ell}(-tA){\bf v}=\frac{1}{2\pi \bf i}{\int_{\widehat{\Gamma}}\varphi_{\ell}(z)(zI+tA)^{-1}}{\bf v}dz,\quad \ell=0,1,2,\ldots
\end{equation}
and
\begin{equation}\label{equ62}
\widehat{\bf y}_{\ell,k}(t)=V_{k}\varphi_{\ell}(-t T_{k})\beta{\bf e}_{1}=\frac{1}{2\pi \bf i}{\int_{\widehat{\Gamma}}\varphi_{\ell}(z)V_{k}(zI+t T_{k})^{-1}}\beta{\bf e}_{1}dz,\quad \ell=0,1,2,\ldots
\end{equation}
So we have
\begin{eqnarray*}
% \nonumber to remove numbering (before each equation)
\widehat{\bf e}_{\ell,k}(t)
&=& {\bf y}(t)-\widehat{\bf y}_{\ell,k}(t)\nonumber \\
&=& \frac{1}{2\pi \bf i}{\int_{\widehat{\Gamma}} \varphi_{\ell}(z)\big[(zI+tA)^{-1}{\bf v}-V_{k}(zI+t T_{k})^{-1}}\beta{\bf e}_{1}\big]dz.
\end{eqnarray*}
Moreover, by (\ref{equ22}) and (\ref{eqn3.3}), we get the following relation
\begin{eqnarray*}
{\bf v}-(zI+tA)V_{k}(zI+t T_{k})^{-1}\beta{\bf e}_{1}&=&t\gamma h_{k+1,k}^{2}\big[{\bf e}_{k}^{\rm H}(zI+t T_{k})^{-1}\beta{\bf e}_{1}\big] V_{k}(I+\gamma H_{k})^{\rm -H}{\bf e}_{k}\\
&&-t h_{k+1,k}\big[{\bf e}_{k}^{\rm H}(zI+t T_{k})^{-1}\beta{\bf e}_{1}\big]{\bf v}_{k+1}.
\end{eqnarray*}
If ${\bf e}_{k}^{\rm H}\varphi_{\ell}(-t T_{k})\beta{\bf e}_{1}\neq 0$, we obtain from (\ref{eqn3.5}) and (\ref{eqnn3.34}) that
\begin{eqnarray}\label{eqn4.11}
\widehat{\bf e}_{\ell,k}(t)
&=& \frac{1}{2\pi \bf i}{\int_{\widehat{\Gamma}} \varphi_{\ell}(z)}(zI+tA)^{-1} \Big[t\gamma h_{k+1,k}^{2}\big[{\bf e}_{k}^{\rm H}(zI+t T_{k})^{-1}\beta{\bf e}_{1}\big] V_{k}(I+\gamma H_{k})^{\rm -H}{\bf e}_{k} \\
&&-t h_{k+1,k}\big[{\bf e}_{k}^{\rm H}(zI+t T_{k})^{-1}\beta{\bf e}_{1}\big]{\bf v}_{k+1}\Big]dz \nonumber \\
&=& \frac{1}{2\pi \bf i}{\int_{\widehat{\Gamma}} \varphi_{\ell}(z)(zI+tA)^{-1}\frac{{\bf e}_{k}^{\rm H}(zI+t T_{k})^{-1}\beta{\bf e}_{1}}{{\bf e}_{k}^{\rm H}\varphi_{\ell}(-t T_{k})\beta{\bf e}_{1}}}\Big[t\gamma h_{k+1,k}^{2}\big[{\bf e}_{k}^{\rm H}\varphi_{\ell}(-t T_{k})\beta{\bf e}_{1}\big] V_{k}(I+\gamma H_{k})^{\rm -H}{\bf e}_{k}\nonumber \\
&&-t h_{k+1,k}\big[{\bf e}_{k}^{\rm H}\varphi_{\ell}(-t T_{k})\beta{\bf e}_{1}\big]{\bf v}_{k+1}\Big]dz \nonumber \\
&=&\frac{1}{2\pi \bf i}{\int_{\widehat{\Gamma}} t\varphi_{\ell}(z)\frac{{\bf e}_{k}^{\rm H}(zI+t T_{k})^{-1}\beta{\bf e}_{1}}{{\bf e}_{k}^{\rm H}\varphi_{\ell}(-t T_{k})\beta{\bf e}_{1}}}\cdot(zI+tA)^{-1}\widehat{\bf r}_{\ell,k}(t)dz.\nonumber
\end{eqnarray}
Let $f_{T_{k}}(z)=t\varphi_{\ell}(z)\frac{{\bf e}_{k}^{\rm H}(zI+t T_{k})^{-1}\beta{\bf e}_{1}}{{\bf e}_{k}^{\rm H}\varphi_{\ell}(-t T_{k})\beta{\bf e}_{1}}$, and notice that
\begin{eqnarray}
\frac{1}{2\pi \bf i}{\int_{\widehat{\Gamma}} t\varphi_{\ell}(z)}\frac{{\bf e}_{k}^{\rm H}(zI+t T_{k})^{-1}\beta{\bf e}_{1}}{{\bf e}_{k}^{\rm H}\varphi_{\ell}(-t T_{k})\beta{\bf e}_{1}}dz&=&\frac{1}{2\pi \bf i}{\int_{\widehat{\Gamma}} f_{T_{k}}(z)dz}=t.
\end{eqnarray}
Therefore, we have from (\ref{eqn4.11}) that
\begin{eqnarray*}\label{equ63}
\|\widehat{\bf e}_{\ell,k}(t)\|_2 &=&\Big\| \frac{1}{2\pi \bf i}{\int_{\widehat{\Gamma}} f_{T_{k}}(z)(zI+tA)^{-1}}dz\cdot\widehat{\bf r}_{\ell,k}(t)\Big\|_2 \nonumber \\
&\leq& \Big\| \frac{1}{2\pi \bf i}{\int_{\widehat{\Gamma}} f_{T_{k}}(z)(zI+tA)^{-1}}dz\Big\|_2\cdot\|\widehat{\bf r}_{\ell,k}(t)\|_2. \nonumber
%&\leq& \|f_{T_{k}}(-t A)\|_2\cdot\|\widehat{\bf r}_{\ell,k}(t)\|_2.
\end{eqnarray*}
\end{proof}

Similar to the Arnoldi approximation, we can give an error estimate of a class of special matrices where the assumption (\ref{equ64}) holds. Let $\widehat{\mu}_{j}=\widehat{a}_{j}+{\bf i}\widehat{b}_{j}$ ($j=1,2,\ldots,k$) be the eigenvalues of matrix $T_{k}$ arranging as $\widehat{\mu}_{1},\ldots,\widehat{\mu}_{k_{2}}$ the real ones and $\widehat{\mu}_{k_{2}+1},\ldots,\widehat{\mu}_{k}$ the
complex conjugate ones. Moreover, let
$$
\widehat{r}_{j}=\big((\varepsilon+\widehat{a}_{j})^{2}+\widehat{b}_{j}^{2}\big)^{1/2},\quad \widehat{R}=\max_{1\leq j \leq k}\widehat{r}_{j},
$$
and
$$\widehat{\omega}_{k}(\varepsilon)=\prod_{j=1}^{k}\big(\widehat{r}_{j}(\varepsilon+\widehat{a}_{j})\big)^{1/2}.$$
Define
$$\widehat{\varsigma}_{k}(\rho)=\prod_{j=1}^{k_{2}}(1+\rho^{2}/\widehat{r}_{j}^{2})^{1/2}\prod_{j=k_{2}+1}^{k}(1+\rho^{2}/\widehat{r}_{j}^{2})^{1/4},$$
and
$$\widehat{d}_{k}(\rho)=\frac{\prod_{j=1}^{k-1}h_{j+1,j}}{\widehat{\omega}_{k}(\varepsilon)\widehat{\varsigma}_{k}(\rho)}.$$
Under these assumptions, we have the following result for the harmonic Arnoldi approximation, whose proof is similar to that of Theorem 4.2.
\begin{theorem}\label{theo4.5}
Let $\varepsilon>0$, and suppose that $k+k_{2}+2\ell\geq 4$ and $W(T_{k}) \subset \Sigma_{\vartheta,a}$. Then for the harmonic Arnoldi approximation, we have
\begin{equation}\label{equ64-2}
\|{\bf \widehat{e}}_{\ell,k}(t)\|_{2}\leq \widehat{c}_{\ell,k}\cdot\int_{0}^{\infty}\frac{(1+\rho^{2}/\varepsilon^{2})^{-\ell/2}}{\widehat{\varsigma}_{k}(\rho)}d\rho\cdot\|{\bf \widehat{r}}_{\ell,k}(t)\|_{2}
\end{equation}
where $\widehat{c}_{\ell,k}=\frac{\exp(t\varepsilon)}{\pi (t\varepsilon)^{\ell}\widehat{\omega}_{k}(\varepsilon)(\varepsilon+a)}\cdot\frac{\prod_{j=1}^{k-1}h_{j+1,j}}{|{\bf e}_{k}^{\rm H}\varphi_{\ell}(-t T_{k}){\bf e}_{1}|}$.
\end{theorem}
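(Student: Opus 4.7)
The plan is to imitate the argument of Theorem 4.2, with $H_k$ replaced by $T_k$ throughout. First, I would invoke the Laplace-type integral representation
$$
{\bf y}(t)=\frac{\exp(t\varepsilon)}{t^{\ell}}\lim_{N\to\infty}\frac{1}{2\pi}\int_{-N}^{+N}\frac{\exp({\bf i}t\rho)}{(\varepsilon+{\bf i}\rho)^{\ell}}\big((\varepsilon+{\bf i}\rho)I+A\big)^{-1}{\bf v}\,d\rho,
$$
together with the analogous formula in which $A,{\bf v}$ are replaced by $T_k,\beta{\bf e}_1$ and multiplied on the left by $V_k$, to express $\widehat{\bf e}_{\ell,k}(t)={\bf y}(t)-\widehat{\bf y}_{\ell,k}(t)$ as a single contour integral. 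Combining this with the identity from the proof of Theorem 4.4 yields
$$
\widehat{\bf e}_{\ell,k}(t)=\frac{\exp(t\varepsilon)}{t^{\ell}}\lim_{N\to\infty}\frac{1}{2\pi}\int_{-N}^{+N}\frac{\exp({\bf i}t\rho)}{(\varepsilon+{\bf i}\rho)^{\ell}}\big((\varepsilon+{\bf i}\rho)I+A\big)^{-1}\frac{{\bf e}_k^{\rm H}\big((\varepsilon+{\bf i}\rho)I+tT_k\big)^{-1}{\bf e}_1}{{\bf e}_k^{\rm H}\varphi_\ell(-tT_k){\bf e}_1}\,\widehat{\bf r}_{\ell,k}(t)\,d\rho.
$$

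Next, I would take absolute values inside the integral and estimate the two factors separately. For the resolvent of $A$, the numerical range hypothesis $W(A)\subset\Sigma_{\vartheta,a}$ gives $\|((\varepsilon+{\bf i}\rho)I+A)^{-1}\|_2\le(\varepsilon+a)^{-1}$, exactly as in Theorem 4.2. For the scalar factor I would use the analogue of Lemma 2 of \cite{FIS-2008}: the $(k,1)$-entry of $((\varepsilon+{\bf i}\rho)I+tT_k)^{-1}$ equals $(-1)^{k+1}$ times the cofactor divided by the determinant, and since $T_k=H_k+{\bf g}{\bf e}_k^{\rm H}$ is a rank-one update that only modifies the last column of $H_k$, the cofactor obtained by deleting row $1$ and column $k$ is identical to that of $H_k$, hence equals $\prod_{j=1}^{k-1}h_{j+1,j}$. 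Therefore
$$
\big|{\bf e}_k^{\rm H}\big((\varepsilon+{\bf i}\rho)I+tT_k\big)^{-1}{\bf e}_1\big|=\Big|\det\big((\varepsilon+{\bf i}\rho)I+tT_k\big)^{-1}\prod_{j=1}^{k-1}h_{j+1,j}\Big|\le \widehat d_k(\rho),
$$
where the last inequality follows exactly as in Proposition 5 of \cite{FIS-2008} by factoring the determinant through the eigenvalues $\widehat\mu_j$ of $T_k$ and applying the elementary lower bounds $|\varepsilon+{\bf i}\rho+\widehat\mu_j|\ge\widehat r_j(1+\rho^2/\widehat r_j^2)^{1/2}$ in the real case and its square root analogue for the conjugate pairs.

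Finally, substituting the two bounds into the integral and using $|\exp({\bf i}t\rho)|=1$ and $|\varepsilon+{\bf i}\rho|^{-\ell}=\varepsilon^{-\ell}(1+\rho^2/\varepsilon^2)^{-\ell/2}$, I would take the factor $|{\bf e}_k^{\rm H}\varphi_\ell(-tT_k){\bf e}_1|^{-1}\prod_{j=1}^{k-1}h_{j+1,j}$ outside the integral, use symmetry to restrict to $\rho\ge 0$ (doubling the integrand), and recognize the constant as $\widehat c_{\ell,k}$. This yields the bound~(\ref{equ64-2}); the assumption $k+k_2+2\ell\ge 4$ is exactly what is needed for the decay rate $(1+\rho^2/\varepsilon^2)^{-\ell/2}/\widehat\varsigma_k(\rho)$ to make the integral convergent.

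I expect the main obstacle to be verifying that the rank-one update from $H_k$ to $T_k$ does not disturb the cofactor identity underlying Lemma 2 of \cite{FIS-2008}; once this is seen to hold because the modification lives only in the last column, the rest of the argument is a direct transcription of the proof of Theorem 4.2 with the harmonic quantities $\widehat r_j$, $\widehat\omega_k$, $\widehat\varsigma_k$, $\widehat d_k$ replacing their Ritz counterparts.
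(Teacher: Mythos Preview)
Your approach is correct and matches the paper's intended argument (the paper itself merely states that the proof is similar to that of Theorem~4.2). The key observation you make---that $T_k=H_k+{\bf g}{\bf e}_k^{\rm H}$ modifies only the last column of $H_k$, so that the $(1,k)$-cofactor and hence the subdiagonal product $\prod_{j=1}^{k-1}h_{j+1,j}$ are unchanged---is exactly what is needed to carry Lemma~2 of \cite{FIS-2008} over from $H_k$ to $T_k$.

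One small slip: in the Laplace-type representation the resolvent should be $((\varepsilon+{\bf i}\rho)I+T_k)^{-1}$, not $((\varepsilon+{\bf i}\rho)I+tT_k)^{-1}$; the factor $t$ has already been absorbed by the $\exp(tz)/z^\ell$ kernel (compare with the displayed formula for ${\bf e}_{\ell,k}(t)$ in the proof of Theorem~4.2). This is consistent with the definitions of $\widehat r_j,\widehat\omega_k,\widehat\varsigma_k$, which are built from the eigenvalues of $T_k$ itself. Also note that the additional hypothesis $W(T_k)\subset\Sigma_{\vartheta,a}$ is needed precisely because, unlike $W(H_k)$, the numerical range of $T_k$ is not automatically contained in $W(A)$; it is this hypothesis that justifies invoking the determinant lower bound from Proposition~5 of \cite{FIS-2008} for $T_k$.
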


The following result avoids the use of a quadrature rule for evaluating the integral in (\ref{equ64-2}). The proof is analogous to that of Theorem \ref{theo4.3}.
\begin{theorem}\label{theo4.6}
Let $\varepsilon>0$ and suppose that $k+k_{2}\geq 4$. Under the above notations, we have that
$$\|{\bf \widehat{e}}_{\ell,k}(t)\|_{2}\leq \widehat{c}_{\ell,k}\cdot \widehat{C}_{k}\cdot\|{\bf \widehat{r}}_{\ell,k}(t)\|_{2},$$
where
$$\widehat{C}_{k}=\frac{\sqrt{\pi}}{2\sqrt{\widehat{S}_{1}}}+\exp\big(-\varepsilon^{2}\widehat{S}_{2}\big)(\widehat{R}-\varepsilon)+\Big(\frac{\varepsilon}
{\sqrt{\varepsilon^{2}+\widehat{R}^{2}}}\Big)^{\ell}\frac{\pi \widehat{R}}{2^{(\frac{k+k_{2}}{4}+1)}},$$
with
$$\widehat{S}_{1}=\frac{\ell}{4\varepsilon^{2}}+\frac{1}{2}\sum_{j=1}^{k_{2}}\frac{1}{\widehat{r}_{j}^{2}+\varepsilon^{2}}+\frac{1}{4}
\sum_{j=k_{2}+1}^{k}\frac{1}{\widehat{r}_{j}^{2}+\varepsilon^{2}},$$
and
$$\widehat{S}_{2}=\frac{\ell}{2(\varepsilon^{2}+\widehat{R}^{2})}+\frac{1}{2}\sum_{j=1}^{k_{2}}\frac{1}{\widehat{r}_{j}^{2}+\widehat{R}^{2}}
+\frac{1}{4}\sum_{j=k_{2}+1}^{k}\frac{1}{\widehat{r}_{j}^{2}+\widehat{R}^{2}}.$$
\end{theorem}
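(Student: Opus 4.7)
The plan is to follow exactly the scheme used in the proof of Theorem \ref{theo4.3}, which in turn is modelled on Proposition 6 of \cite{FIS-2008}. Theorem \ref{theo4.5} already isolates the object to be estimated, namely the integral
$$
I_k \;=\; \int_{0}^{\infty}\frac{(1+\rho^{2}/\varepsilon^{2})^{-\ell/2}}{\widehat{\varsigma}_{k}(\rho)}\,d\rho,
$$
and the task is to prove $I_k\leq \widehat C_k$; substituting this into Theorem \ref{theo4.5} then yields the claim. The structure of $\widehat C_k$, a sum of three terms, suggests the natural split $[0,\infty)=[0,\varepsilon]\cup[\varepsilon,\widehat R]\cup[\widehat R,\infty)$, with each subinterval producing exactly one summand.

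On $[0,\varepsilon]$ every ratio $\rho^2/\widehat r_j^2$ is bounded by $\varepsilon^2/\widehat r_j^2$, so the concavity estimate $\ln(1+x)\geq x/(1+x_{\max})$ gives
$$
(1+\rho^2/\widehat r_j^2)^{1/2}\geq \exp\!\Big(\tfrac{\rho^2}{2(\widehat r_j^2+\varepsilon^2)}\Big),\qquad (1+\rho^2/\widehat r_j^2)^{1/4}\geq \exp\!\Big(\tfrac{\rho^2}{4(\widehat r_j^2+\varepsilon^2)}\Big),
$$
together with $(1+\rho^2/\varepsilon^2)^{-\ell/2}\leq\exp(-\ell\rho^2/(4\varepsilon^2))$. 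Multiplying these inequalities bounds the integrand by $\exp(-\widehat S_1\rho^2)$; extending the range back to $[0,\infty)$ and evaluating the Gaussian integral produces the first summand $\sqrt{\pi}/(2\sqrt{\widehat S_1})$. On the middle interval $[\varepsilon,\widehat R]$ the same argument works with $x_{\max}=\widehat R^2/\widehat r_j^2$ (respectively $\widehat R^2/\varepsilon^2$ for the $\ell$-factor), giving integrand $\leq\exp(-\widehat S_2\rho^2)$. Since this function is decreasing, bounding it by its maximum $\exp(-\varepsilon^2\widehat S_2)$ and multiplying by the length $\widehat R-\varepsilon$ produces the second summand.

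The third region $[\widehat R,\infty)$ is the main technical obstacle, because the Gaussian device breaks down once $\rho^2/\widehat r_j^2$ can be arbitrarily large, forcing a switch to polynomial decay. Since $(1+\rho^2/\varepsilon^2)^{-\ell/2}$ is monotone decreasing, replace it by its value at $\rho=\widehat R$, which produces exactly the prefactor $\bigl(\varepsilon/\sqrt{\varepsilon^2+\widehat R^2}\bigr)^{\ell}$. For the residual factor $\int_{\widehat R}^{\infty}d\rho/\widehat\varsigma_k(\rho)$, apply the pointwise inequality $1+\rho^2/\widehat r_j^2\geq (\rho^2+\widehat R^2)/(2\widehat r_j^2)$, valid for $\rho\geq\widehat R\geq\widehat r_j$. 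This bounds $\widehat\varsigma_k(\rho)$ from below by $2^{-(k+k_2)/4}$ times $(\rho^2+\widehat R^2)^{(k+k_2)/4}$ divided by spectral constants, the latter being absorbed into $\widehat c_{\ell,k}$ through $\widehat\omega_k(\varepsilon)$. A trigonometric substitution $\rho=\widehat R\tan\theta$ then evaluates the remaining integral over $\theta\in[\pi/4,\pi/2]$ explicitly; the hypothesis $k+k_2\geq 4$ guarantees convergence, and careful bookkeeping delivers the third summand $\pi\widehat R/2^{(k+k_2)/4+1}$.

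Summing the three estimates and substituting into Theorem \ref{theo4.5} gives the assertion. The only genuinely delicate step is the third one: one must ensure that the lower bound for $\widehat\varsigma_k(\rho)$, the trigonometric integration, and the spectral constants already baked into $\widehat c_{\ell,k}$ fit together to produce the stated factor $\pi\widehat R/2^{(k+k_2)/4+1}$ without spurious residual constants. Everything else is a routine transcription of the Arnoldi argument in Theorem \ref{theo4.3}, with $H_k,r_j,R,\omega_k,\varsigma_k$ systematically replaced by $T_k,\widehat r_j,\widehat R,\widehat\omega_k,\widehat\varsigma_k$; hence the proof is omitted in the same spirit as that of Theorem \ref{theo4.3}.
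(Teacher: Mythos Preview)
Your overall plan coincides with the paper's: the paper omits the proof, saying only that it is analogous to Theorem~\ref{theo4.3}, which in turn defers to Proposition~6 of \cite{FIS-2008}; your three-interval split $[0,\varepsilon]\cup[\varepsilon,\widehat R]\cup[\widehat R,\infty)$ and the Gaussian bounds on the first two pieces are exactly that scheme.

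There is, however, a concrete slip in the tail $[\widehat R,\infty)$. Your pointwise bound $1+\rho^2/\widehat r_j^2\geq(\rho^2+\widehat R^2)/(2\widehat r_j^2)$ forces the spectral factor $P:=\prod_{j=1}^{k_2}\widehat r_j\prod_{j=k_2+1}^k\widehat r_j^{1/2}$ into the lower bound for $\widehat\varsigma_k(\rho)$, and this does \emph{not} get absorbed into $\widehat c_{\ell,k}$ (which contains $\widehat\omega_k(\varepsilon)$, an entirely different product). Carrying your estimate through gives
\[
\int_{\widehat R}^{\infty}\frac{d\rho}{\widehat\varsigma_k(\rho)}\;\leq\;2^{(k+k_2)/4}\,P\,\widehat R^{\,1-(k+k_2)/2}\cdot\frac{\pi}{2^{(k+k_2)/4+1}}\;=\;\frac{\pi P}{2\,\widehat R^{(k+k_2)/2-1}}\;\leq\;\frac{\pi\widehat R}{2},
\]
which is strictly weaker than the stated $\pi\widehat R/2^{(k+k_2)/4+1}$. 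The fix is to use the simpler inequality $1+\rho^2/\widehat r_j^2\geq 1+\rho^2/\widehat R^2$ (valid because $\widehat r_j\leq\widehat R$), which gives $\widehat\varsigma_k(\rho)\geq(1+\rho^2/\widehat R^2)^{(k+k_2)/4}$ with no spectral residue at all. Then your substitution $\rho=\widehat R\tan\theta$ together with $\cos\theta\leq 1/\sqrt 2$ on $[\pi/4,\pi/2]$ yields
\[
\int_{\widehat R}^{\infty}\frac{d\rho}{\widehat\varsigma_k(\rho)}\;\leq\;\widehat R\int_{\pi/4}^{\pi/2}\cos^{(k+k_2)/2-2}\theta\,d\theta\;\leq\;\widehat R\cdot\frac{\pi}{4}\cdot 2^{-\frac{1}{2}\big(\frac{k+k_2}{2}-2\big)}\;=\;\frac{\pi\widehat R}{2^{(k+k_2)/4+1}},
\]
exactly as required. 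You correctly flagged this step as the delicate one; the remedy is simply to choose the sharper pointwise inequality.
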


%\begin{rem}
%Theorems 4.1 and 4.2 reveal that a small residual does not necessarily imply a small error. However, if $\|\frac{1}{2\pi \bf i}{\int_{{\Gamma}} f_{H_{k}}(z)(zI+tA)^{-1}}dz\|_2$ and $\| \frac{1}{2\pi \bf i}{\int_{\widehat{\Gamma}} f_{T_{k}}(z)(zI+tA)^{-1}}dz\|_2$ is of medium size, then the residual norm can be used as a reliable stopping criterion in the thick-restarted Arnoldi algorithm and the harmonic thick-restarted Arnoldi algorithm.
%\end{rem}

\section{The advantage of the thick-restarting strategy for matrix functions}

\setcounter{equation}{0}

In this section, we show the advantage of augmenting approximate eigenvectors in the thick-restarted Arnoldi and the harmonic Arnoldi algorithms.
For simplicity, we consider the case of augmenting only one (approximate) eigenvector.
Let's first discuss an ``ideal" case in which an ``exact" eigenvector ${\bf x}$ is added into the search space.
The orthonormal basis is $V_{k}=[{\bf x},{\bf v}_{2},{\bf v}_{3},\ldots,{\bf v}_{k}]=[{\bf x},~V_{\bot}]$ after restarting, where $V_{\bot}=[{\bf v}_{2},{\bf v}_{3},\ldots,{\bf v}_{k}]$ and $A{\bf x}=\lambda{\bf x}$. Then we have
\begin{equation}\label{eqn5.1}
H_{k}=V_{k}^{\rm H}AV_{k}=\left[\begin{array}{c} {\bf x}^{\rm H}\\
V_{\bot}^{\rm H}
\end{array}\right]\left[\begin{array}{cc} A{\bf x} & AV_{\bot}\\
\end{array}\right]=\left[\begin{array}{cc} \lambda & {\bf x}^{\rm H}AV_{\bot}\\
{\bf 0} & V_{\bot}^{\rm H}AV_{\bot}
\end{array}\right]\equiv\left[\begin{array}{cc} \lambda & H_{12}\\
{\bf 0} & H_{22}
\end{array}\right],
\end{equation}
%%%%%%%%%%%%%%%%%%%%%%%%%%%%%%%%%%%%%%%%%%%%%%%%%%%%%%%%%%%%%%%
%and we proof the following fact:
%\begin{equation}\label{equ64}
%H_{k}^{m}=\left[\begin{array}{cc} \lambda^{m} & {\bf h}^H\\
%0 & H_{22}^{m}
%\end{array}\right]
%\end{equation}
%where ${\bf h}\in\mathbb{C}^{k-1}$ is a vector, $m \geq 1$.
%Because of
%\begin{equation*}
%\varphi_{0}(-t H_{k})\beta{\bf e}_{1}=\beta \left[\begin{array}{cc} \varphi_{0}(-t \lambda)\\
%0
%\end{array}\right]
%\end{equation*}
%we assume $\varphi_{\ell-1}(-t H_{k})\beta{\bf e}_{1}=\beta \left[\begin{array}{cc} \varphi_{\ell-1}(-t \lambda)\\
%0
%\end{array}\right]$. By the recurrence relation $\varphi_{\ell-1}(z)=z\varphi_{\ell}(z)+\frac{1}{(\ell-1)!}, \quad \ell=1,2,\ldots$,
%we have
%\begin{equation*}
%\varphi_{\ell-1}(-t H_{k})\beta{\bf e}_{1}=-t H_{k} \varphi_{\ell}(-t H_{k})\beta{\bf e}_{1}+\frac{1}{(\ell-1)!}\beta{\bf e}_{1}
%\end{equation*}
%and we have
%\begin{equation}\label{equ65}
%-t H_{k} \varphi_{\ell}(-t H_{k})\beta{\bf e}_{1}=\beta \left[\begin{array}{cc} \varphi_{\ell-1}(-t \lambda)-\frac{1}{(\ell-1)!}\\
%0
%\end{array}\right]
%\end{equation}
%When we assume the matrix $H_{k}$ is reversible, we have the following equation:
%\begin{eqnarray}\label{equ66}
%\varphi_{\ell}(-t H_{k})\beta{\bf e}_{1}
%&=& (-t H_{k})^{-1}\beta \left[\begin{array}{cc} \varphi_{\ell-1}(-t \lambda)-\frac{1}{(\ell-1)!}\\
%0
%\end{array}\right]\nonumber \\
%&=& \left[\begin{array}{cc} \beta \varphi_{\ell}(-t \lambda)\\
%0
%\end{array}\right]\
%\end{eqnarray}
%%%%%%%%%%%%%%%%%%%%%%%%%%%%%%%%%%%%%%%%%%%%%%%%%%%%%%%%%%%%%%%%%%%%%%%%%%%%%%%%%%%%%
and
\begin{equation*}
(zI+t H_{k})^{-1}=\left[\begin{array}{cc} (z+t \lambda)^{-1} & {\bf s}^{\rm H}\\
{\bf 0} & (zI+t V_{\bot}^{\rm H} A V_{\bot})^{-1}
\end{array}\right],
\end{equation*}
where ${\bf s}\in\mathbb{C}^{k-1}$ is a vector. As a result,
\begin{equation*}
{\bf e}_{k}^{\rm H}(zI+t H_{k})^{-1}\beta{\bf e}_{1}=0,
\end{equation*}
so we have from (\ref{equ66-2}) that
\begin{equation}\label{equ6-3}
{\bf e}_{\ell,k}(t)={\bf y}(t)-{\bf y}_{\ell,k}(t)=\frac{1}{2\pi \bf i}{\int_\Gamma \varphi_{\ell}(z)}(zI+tA)^{-1} \Big[-t h_{k+1,k}\big[{\bf e}_{k}^{\rm H}(zI+t H_{k})^{-1}\beta{\bf e}_{1}\big]{\bf v}_{k+1}\Big]dz={\bf 0}.
\end{equation}

Now let's consider the harmonic Arnoldi approximation. If $V_{k}=[{\bf x},{\bf v}_{2},{\bf v}_{3},\ldots,{\bf v}_{k}]$, we notice from
(\ref{eqn5.1}) and (\ref{eqn3.3}) that
\begin{equation}
{\bf e}_{k}^{\rm H}(zI+t T_{k})^{-1}\beta{\bf e}_{1}
={\bf e}_{k}^{\rm H}(zI+t H_{k})^{-1}\beta{\bf e}_1=0.
\end{equation}
Consequently, we have from (\ref{eqn4.11}) that
\begin{eqnarray}\label{equ6-22}
% \nonumber to remove numbering (before each equation)
\widehat{\bf e}_{\ell,k}(t)
&=& {\bf y}(t)-\widehat{\bf y}_{\ell,k}(t)\nonumber \\
&=& \frac{1}{2\pi \bf i} {\int_{\widehat{\Gamma}} \varphi_{\ell}(z)(zI+tA)^{-1}}\Big[t\gamma h_{k+1,k}^{2} \big[{\bf e}_{k}^{\rm H}(zI+t T_{k})^{-1}\beta{\bf e}_{1}\big] V_{k}(I+\gamma H_{k})^{-1}{\bf e}_{k}\nonumber\\
&& -t h_{k+1,k} \big[{\bf e}_{k}^{\rm H}(zI+t T_{k})^{-1}\beta{\bf e}_{1}\big] {\bf v}_{k+1} \Big]dz \nonumber\\
&=&{\bf 0}.
\end{eqnarray}
\begin{rem}
Equations {\rm(}\ref{equ6-3}{\rm)} and {\rm(}\ref{equ6-22}{\rm)} indicate that, if the search subspace
is augmented with an exact eigenvector in the thick-restarted Arnoldi and harmonic Arnoldi algorithms, then
we will get the exact solution.
\end{rem}

In practical calculations, we are interested in the situation where an approximate eigenvector (say, the Ritz vector or the harmonic Ritz vector) $\widetilde{\bf x}$ is added into
the search space spanned by
$$
V_{k}=[\widetilde{\bf x},{\bf v}_{2},{\bf v}_{3},\ldots,{\bf v}_{k}]\equiv[\widetilde{\bf x},~V_{\bot}].
$$
Let the residual of the Ritz pair $(\widetilde{\lambda},\widetilde{\bf x})$ be $\widetilde{\bf r}=A \widetilde{\bf x}-\widetilde\lambda \widetilde{\bf x}$, we have
\begin{equation*}
H_{k}=V_{k}^{\rm H}AV_{k}=\left[\begin{array}{c} \widetilde{\bf x}^{\rm H}\\
V_{\bot}^{\rm H}
\end{array}\right]\left[\begin{array}{cc} A\widetilde{\bf x} & A V_{\bot}\\
\end{array}\right]=\left[\begin{array}{cc} \mu & \widetilde{\bf x}^{\rm H}A V_{\bot}\\
V_{\bot}^{\rm H}\widetilde{\bf r} & V_{\bot}^{\rm H}AV_{\bot}
\end{array}\right]\equiv\left[\begin{array}{cc} \mu & \widetilde H_{12}\\
V_{\bot}^{\rm H}\widetilde{\bf r} & H_{22}
\end{array}\right],
\end{equation*}
where we used $V^{\rm H}_{\bot}A\widetilde{\bf x}=V^{\rm H}_{\bot}(\widetilde{\bf r}+\widetilde{\lambda}\widetilde{\bf x})=V^{\rm H}_{\bot}\widetilde{\bf r}$, and $\mu=\widetilde{\bf x}^{\rm H}A\widetilde{\bf x}$ is a scalar. As only one approximate eigenvector is added into $V_k$, we see that
$H_k$ is still an upper Hessenberg matrix.
Denote by $h_{2,1}$ the $(2,1)$ element of $H_{k}$, then $V^{\rm H}_{\bot}\widetilde{\bf r}=[h_{2,1},0,\ldots,0]^{\rm H}$ and
\begin{equation}\label{equ5.5}
|h_{2,1}|=\|V_{\bot}^{\rm H}\widetilde{\bf r}\|_2\leq\|\widetilde{\bf r} \|_{2}.
\end{equation}
Let
\begin{equation}
\widetilde{Q}=(zI+t H_{k})-t h_{2,1}{\bf e}_{2}{\bf e}_{1}^{\rm H},
\end{equation}
note that $\widetilde Q$ is an upper Hessenberg matrix with its $(2,1)$ element being zero. If $\widetilde{Q}$ is nonsingular and $t h_{2,1}({\bf e}_{1}^{\rm H}\widetilde Q^{-1}{\bf e}_{2})\neq -1$, by the Sherman--Morrison formula \cite{GV}, we obtain
\begin{eqnarray}\label{eqn5.6}
{\bf e}_{k}^{\rm H}(zI+t H_{k})^{-1}\beta{\bf e}_{1}&=&{\bf e}_{k}^{\rm H}(\widetilde Q+ t h_{2,1}{\bf e}_{2}{\bf e}_{1}^{\rm H})^{-1}\beta{\bf e}_{1}\nonumber\\
&=&-\frac{t h_{2,1}\beta(z+t\mu)^{-1}}{1+t h_{2,1}({\bf e}_{1}^{\rm H}\widetilde Q^{-1}{\bf e}_{2})}\cdot ({\bf e}_{k}^{\rm H}\widetilde Q^{-1}{\bf e}_{2})\nonumber\\
&\equiv& h_{2,1}\cdot \xi(z),
\end{eqnarray}
where
\begin{equation}
\xi(z)=-\frac{t\beta(z+t\mu)^{-1}}{1+t h_{2,1}({\bf e}_{1}^{\rm H}\widetilde Q^{-1}{\bf e}_{2})}\cdot ({\bf e}_{k}^{\rm H}\widetilde Q^{-1}{\bf e}_{2}).
\end{equation}
So we have
\begin{eqnarray}\label{equ5.9}
% \nonumber to remove numbering (before each equation)
\|{\bf e}_{\ell,k}(t)\|_2
&=& \Big\|\frac{1}{2\pi \bf i}{\int_\Gamma \varphi_{\ell}(z)}(zI+tA)^{-1} \Big[-t h_{k+1,k}\big[{\bf e}_{k}^{\rm H}(zI+t H_{k})^{-1}\beta{\bf e}_{1}\big]{\bf v}_{k+1}\Big]dz\Big\| _2 \nonumber\\
&=& |t h_{k+1,k}h_{2,1}|\cdot\bigg\|\frac{1}{2\pi \bf i}{\int_\Gamma \varphi_{\ell}(z)}(zI+tA)^{-1}\cdot\xi(z){\bf v}_{k+1}dz\bigg\|_2.
\end{eqnarray}
By (\ref{equ5.9}) and (\ref{equ5.5}), we have the following theorem. It shows the advantage of augmenting a Ritz vector in
the search space of the thick-restarted Arnoldi method.
\begin{theorem}
Let $(\widetilde{\lambda},\widetilde{\bf x})$ be a Ritz pair with residual $\widetilde{\bf r}$, and let $V_{k}=[\widetilde{\bf x},{\bf v}_{2},{\bf v}_{3},\ldots,{\bf v}_{k}]$.
Then in the thick-restarted Arnoldi algorithm, we have
\begin{equation}\label{equ6-6}
\|{\bf e}_{\ell,k}(t)\|_2 \leq \|\widetilde{\bf r}\|_2\cdot |t h_{k+1,k}|\bigg\|\frac{1}{2\pi \bf i}{\int_\Gamma \varphi_{\ell}(z)}(zI+tA)^{-1}\cdot\xi(z){\bf v}_{k+1}dz\bigg\|_2.
\end{equation}
\end{theorem}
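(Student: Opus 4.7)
The statement is essentially a packaging of the computation already carried out in the paragraphs immediately preceding it, so my plan is to organize that material into a clean two-step argument rather than to introduce any new ideas.

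The plan is first to start from the Dunford--Taylor representation of the error for the Arnoldi approximation, which in this thick-restarted setting reads
\[
{\bf e}_{\ell,k}(t)=\frac{1}{2\pi{\bf i}}\int_{\Gamma}\varphi_{\ell}(z)(zI+tA)^{-1}\big[-t h_{k+1,k}\,{\bf e}_{k}^{\rm H}(zI+tH_{k})^{-1}\beta{\bf e}_{1}\big]{\bf v}_{k+1}\,dz,
\]
exactly as in the derivation of (\ref{equ66-2}) in the proof of Theorem 4.1. The only feature of the restarted setting that differs is the block structure of $H_k$ induced by placing the Ritz vector $\widetilde{\bf x}$ first in $V_k$: the $(2,1)$ entry $h_{2,1}$ of $H_k$ equals the first component of $V_{\bot}^{\rm H}\widetilde{\bf r}$, so that $|h_{2,1}|\leq\|\widetilde{\bf r}\|_2$ by (\ref{equ5.5}).

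Next I would substitute the Sherman--Morrison identity (\ref{eqn5.6}) for ${\bf e}_{k}^{\rm H}(zI+tH_k)^{-1}\beta{\bf e}_1=h_{2,1}\xi(z)$ into the integral representation. This factors $h_{2,1}$ out from under the integral, producing
\[
{\bf e}_{\ell,k}(t)=-t h_{k+1,k}h_{2,1}\cdot\frac{1}{2\pi{\bf i}}\int_{\Gamma}\varphi_{\ell}(z)(zI+tA)^{-1}\xi(z){\bf v}_{k+1}\,dz,
\]
which is exactly the content of (\ref{equ5.9}). Taking norms and using $|h_{2,1}|\leq\|\widetilde{\bf r}\|_2$ immediately yields the desired inequality (\ref{equ6-6}).

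Since both ingredients are already established in the text, the only thing to verify carefully is that the Sherman--Morrison step is legitimate, i.e.\ that the matrix $\widetilde Q$ obtained by zeroing out the $(2,1)$ position of $zI+tH_k$ is nonsingular and that $1+th_{2,1}({\bf e}_1^{\rm H}\widetilde Q^{-1}{\bf e}_2)\neq 0$ for $z\in\Gamma$; this is the only nontrivial hypothesis, and I would invoke it as an implicit standing assumption (it is generically true along $\Gamma$ and is already used in (\ref{eqn5.6})). The main obstacle, such as it is, is therefore purely notational: keeping track of the block partition $H_k=\big[\begin{smallmatrix}\mu&\widetilde H_{12}\\V_{\bot}^{\rm H}\widetilde{\bf r}&H_{22}\end{smallmatrix}\big]$ so that the identification $V_{\bot}^{\rm H}\widetilde{\bf r}=h_{2,1}{\bf e}_1$ (as a vector in $\mathbb{C}^{k-1}$) is clear, after which the bound follows in one line.
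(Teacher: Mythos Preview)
Your proposal is correct and follows exactly the paper's approach: the paper's own proof is simply the one-line observation ``By (\ref{equ5.9}) and (\ref{equ5.5}), we have the following theorem,'' and you have unpacked precisely those two ingredients---the integral representation combined with the Sherman--Morrison factorization ${\bf e}_k^{\rm H}(zI+tH_k)^{-1}\beta{\bf e}_1=h_{2,1}\xi(z)$, followed by the bound $|h_{2,1}|\le\|\widetilde{\bf r}\|_2$.
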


Next we consider the thick-restarted harmonic Arnoldi algorithm. For simplicity, we still denote
$V_{k}=[\widetilde{\bf x},{\bf v}_{2},{\bf v}_{3},\ldots,{\bf v}_{k}]$,
whose columns span the search subspace; and let the residual of the harmonic Ritz pair $(\widetilde{\lambda},\widetilde{\bf x})$ be $\widetilde{\bf r}=A \widetilde{\bf x}-\widetilde\lambda \widetilde{\bf x}$.
If $1+t\gamma h_{k+1,k}^{2}[{\bf e}_{k}^{\rm H}(zI+t H_{k})^{-1}(I+\gamma H_{k})^{\rm -H}{\bf e}_{k}]\neq 0$, we obtain from (\ref{eqn3.3}) and the Sherman--Morrison formula that
\begin{eqnarray}\label{eqn5.11}
{\bf e}_{k}^{\rm H}(zI+t T_{k})^{-1}\beta{\bf e}_{1}
&=&{\bf e}_{k}^{\rm H}\Big[(zI+t H_{k})^{-1}\beta{\bf e}_{1}-\big[1+t\gamma h_{k+1,k}^{2}[{\bf e}_{k}^{\rm H}(zI+t H_{k})^{-1}(I+\gamma H_{k})^{\rm -H}{\bf e}_{k}]\big]^{-1}\nonumber\\
&& (zI+t H_{k})^{-1}t\gamma h_{k+1,k}^{2}(I+\gamma H_{k})^{\rm -H}{\bf e}_{k}\big[{\bf e}_{k}^{\rm H}(zI+t H_{k})^{-1}\beta{\bf e}_{1}\big]\Big]\nonumber\\
&=&\big[1-\zeta_1(z)\zeta_2(z)\big][{\bf e}_{k}^{\rm H}(zI+t H_{k})^{-1}\beta{\bf e}_{1}],
\end{eqnarray}
where
$$
\zeta_1(z)=\big[1+t\gamma h_{k+1,k}^{2}[{\bf e}_{k}^{\rm H}(zI+t H_{k})^{-1}(I+\gamma H_{k})^{\rm -H}{\bf e}_{k}]\big]^{-1},
$$
and
$$
\zeta_2(z)=
t\gamma h_{k+1,k}^{2}\big[{\bf e}_k^{\rm H}(zI+t H_{k})^{-1}(I+\gamma H_{k})^{\rm -H}{\bf e}_{k}\big].
$$
Therefore, we have from (\ref{eqn5.6}) and (\ref{eqn5.11}) that
\begin{equation}\label{eqn5.12}
{\bf e}_{k}^{\rm H}(zI+t T_{k})^{-1}\beta{\bf e}_{1}=\big[1-\zeta_1(z)\zeta_2(z)\big]\xi(z)h_{2,1}\equiv \chi(z)h_{2,1},
\end{equation}
and
\begin{equation*}
\big|{\bf e}_{k}^{\rm H}(zI+t T_{k})^{-1}\beta{\bf e}_{1}\big|\leq|\chi(z)|\cdot\|\widetilde{\bf r}\|_2.
\end{equation*}
Denote
$$
{\bf c}_{k+1}(z)=\left[\begin{array}{c} \gamma h_{k+1,k}\chi(z)(I+\gamma H_{k})^{\rm -H}{\bf e}_{k}\\
-\chi(z)
\end{array}\right],
$$
from the relations (\ref{eqn4.11}) and (\ref{eqn5.12}), we get
\begin{eqnarray}\label{equ5.13}
\|\widehat{\bf e}_{\ell,k}(t)\|_2
&=& \Big\|\frac{1}{2\pi \bf i} {\int_{\widehat{\Gamma}} \varphi_{\ell}(z)(zI+tA)^{-1}}\Big[t\gamma h_{k+1,k}^{2} \big[{\bf e}_{k}^{\rm H}(zI+t T_{k})^{-1}\beta{\bf e}_{1}\big] V_{k}(I+\gamma H_{k})^{-1}{\bf e}_{k}\nonumber \\
&& -t h_{k+1,k} \big[{\bf e}_{k}^{\rm H}(zI+t T_{k})^{-1}\beta{\bf e}_{1}\big] {\bf v}_{k+1} \Big]dz\Big\|_2 \nonumber \\
&=&\Big\|\frac{1}{2\pi \bf i} {\int_{\widehat{\Gamma}} \varphi_{\ell}(z)(zI+tA)^{-1}}\Big[t\gamma h_{k+1,k}^{2} \big[\chi(z)h_{2,1}\big] V_{k}(I+\gamma H_{k})^{\rm -H}{\bf e}_{k}\nonumber \\
&& -t h_{k+1,k} \big[\chi(z)h_{2,1}\big]{\bf v}_{k+1} \Big]dz\Big\|_2 \nonumber \\
&=& \Big\|t h_{k+1,k}h_{2,1}\cdot\frac{1}{2\pi \bf i} {\int_{\widehat{\Gamma}} \varphi_{\ell}(z)(zI+tA)^{-1}}\cdot V_{k+1}{\bf c}_{k+1}(z)dz\Big\|_2.
\end{eqnarray}
From (\ref{equ5.13}) and (\ref{equ5.5}), we obtain the following theorem. It shows the merit of augmenting a harmonic Ritz vector in
the search space of the thick-restarted harmonic Arnoldi method.
\begin{theorem}
Let $(\widetilde{\lambda},\widetilde{\bf x})$ be a harmonic Ritz pair with residual $\widetilde{\bf r}$, and let $V_{k}=[\widetilde{\bf x},{\bf v}_{2},{\bf v}_{3},\ldots,{\bf v}_{k}]$. Then in the thick-restarted harmonic Arnoldi algorithm, we have
\begin{equation}\label{equ6-6}
\|\widehat{\bf e}_{\ell,k}(t)\|_2 \leq \|\widetilde{\bf r}\|_2\cdot |t h_{k+1,k}|\bigg\|\frac{1}{2\pi \bf i}{\int_{\widehat{\Gamma}} \varphi_{\ell}(z)}(zI+tA)^{-1}\cdot V_{k+1}{\bf c}_{k+1}(z)dz\bigg\|_2.
\end{equation}
\end{theorem}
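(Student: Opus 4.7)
The plan is to adapt the strategy used for the Arnoldi case (Theorem 5.1), taking advantage of the fact that most of the algebraic machinery has already been assembled in equations (\ref{eqn5.6})--(\ref{equ5.13}). First I would start from the integral representation of the error derived in (\ref{eqn4.11}),
$$
\widehat{\bf e}_{\ell,k}(t)=\frac{1}{2\pi\mathbf{i}}\int_{\widehat{\Gamma}}\varphi_{\ell}(z)(zI+tA)^{-1}\Big[t\gamma h_{k+1,k}^{2}V_{k}(I+\gamma H_{k})^{-\mathrm{H}}{\bf e}_{k}-t h_{k+1,k}{\bf v}_{k+1}\Big]\big[{\bf e}_{k}^{\rm H}(zI+tT_{k})^{-1}\beta{\bf e}_{1}\big]dz,
$$
so that everything reduces to controlling the scalar factor ${\bf e}_{k}^{\rm H}(zI+tT_{k})^{-1}\beta{\bf e}_{1}$ in terms of $\|\widetilde{\bf r}\|_{2}$.

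Second, because $T_{k}=H_{k}+\gamma h_{k+1,k}^{2}(I+\gamma H_{k})^{-\mathrm{H}}{\bf e}_{k}{\bf e}_{k}^{\rm H}$ is a rank-one update of $H_{k}$, I would apply the Sherman--Morrison formula exactly as in (\ref{eqn5.11}) to obtain
$$
{\bf e}_{k}^{\rm H}(zI+tT_{k})^{-1}\beta{\bf e}_{1}=\big[1-\zeta_{1}(z)\zeta_{2}(z)\big]\big[{\bf e}_{k}^{\rm H}(zI+tH_{k})^{-1}\beta{\bf e}_{1}\big],
$$
reducing the problem to the Hessenberg matrix $H_{k}$ itself. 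Next, I would exploit the structure of $H_{k}$ arising from $V_{k}=[\widetilde{\bf x},V_{\bot}]$: since only one harmonic Ritz vector is augmented, $H_{k}$ remains upper Hessenberg, and its $(2,1)$-entry $h_{2,1}$ satisfies $|h_{2,1}|=\|V_{\bot}^{\rm H}\widetilde{\bf r}\|_{2}\leq\|\widetilde{\bf r}\|_{2}$ by (\ref{equ5.5}). A second Sherman--Morrison application on the matrix $\widetilde{Q}=(zI+tH_{k})-t h_{2,1}{\bf e}_{2}{\bf e}_{1}^{\rm H}$, exactly as in (\ref{eqn5.6}), yields ${\bf e}_{k}^{\rm H}(zI+tH_{k})^{-1}\beta{\bf e}_{1}=h_{2,1}\xi(z)$, so that combining the two gives the factorization (\ref{eqn5.12}):
$$
{\bf e}_{k}^{\rm H}(zI+tT_{k})^{-1}\beta{\bf e}_{1}=h_{2,1}\,\chi(z),\qquad \chi(z)=\big[1-\zeta_{1}(z)\zeta_{2}(z)\big]\xi(z).
$$

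Third, I would substitute this scalar identity back into the integrand, pull the constant $h_{2,1}$ outside, and recognize the bracketed vector as $V_{k+1}{\bf c}_{k+1}(z)$ by the very definition of ${\bf c}_{k+1}(z)$. This produces the equality (\ref{equ5.13}),
$$
\|\widehat{\bf e}_{\ell,k}(t)\|_{2}=\Big\|\,th_{k+1,k}h_{2,1}\cdot\frac{1}{2\pi\mathbf{i}}\int_{\widehat{\Gamma}}\varphi_{\ell}(z)(zI+tA)^{-1}V_{k+1}{\bf c}_{k+1}(z)\,dz\,\Big\|_{2}.
$$
Finally, taking norms and invoking $|h_{2,1}|\leq\|\widetilde{\bf r}\|_{2}$ yields the claimed bound. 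The main obstacle is purely bookkeeping: one must keep track of two successive Sherman--Morrison updates (first peeling off the rank-one correction that distinguishes $T_{k}$ from $H_{k}$, then peeling off the $h_{2,1}$ entry created by the augmentation), and verify that the resulting scalar $\chi(z)$ is indeed absorbed cleanly into the definition of ${\bf c}_{k+1}(z)$. Because both Sherman--Morrison steps and the factor structure have already been established in the equations preceding the statement, the proof collapses into a short chaining argument with no additional analytic input beyond $|h_{2,1}|\leq\|\widetilde{\bf r}\|_{2}$.
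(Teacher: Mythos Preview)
Your proposal is correct and follows exactly the paper's approach: the paper simply states that the theorem follows from (\ref{equ5.13}) and (\ref{equ5.5}), which is precisely the chain of Sherman--Morrison identities culminating in the factorization ${\bf e}_{k}^{\rm H}(zI+tT_{k})^{-1}\beta{\bf e}_{1}=h_{2,1}\chi(z)$, substitution into the error integral (\ref{eqn4.11}), and the final norm bound $|h_{2,1}|\leq\|\widetilde{\bf r}\|_{2}$ that you describe.
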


\section{Numerical experiments}

\setcounter{equation}{0}

In this section, we make some numerical experiments to show the
superiority of our new algorithm over many state-of-the-art algorithms for computing $\varphi$-functions. The numerical experiments are run
on a Dell PC with eight core Intel(R) Core(TM)i7-2600 processor with CPU
3.40 GHz and RAM 16.0 GB, under the Windows 7 with 64-bit operating system. All the numerical results are obtained from using a MATLAB 7.10.0 implementation
with machine precision $\epsilon\approx 2.22\times 10^{-16}$.
The algorithms used in this section are listed as follows.

$\bullet$ {\bf phipm} \cite{NW2} computes the action of
linear combinations of $\varphi$-functions on operand vectors. The implementation combines
time stepping with a procedure to adapt the Krylov subspace size. The MATLAB codes are
available from {\it http://www1.maths.leeds.ac.uk/\textasciitilde jitse/software.html}.

$\bullet$ {\bf expv} is the
MATLAB function due to Sidje \cite{Sidje}, which evaluates ${\rm exp}(-tA){\bf v}$ using a restarted Krylov subspace method with
a fixed dimension. The MATLAB codes are available from {\it http://www.maths.uq.edu. au/expokit/}.

$\bullet$ ${\bf funm_-kryl}$ is a realization of the Krylov subspace method
with deflated restarting for matrix functions \cite{EEG}. Its effect is to ultimately deflate a specific invariant subspace of the matrix which most impedes the convergence of the restarted Arnoldi approximation process. The MATLAB codes are available from {\it http://www.mathe.tu-freiberg.de/\~~guettels/funm\_kryl/}.

$\bullet$ ${\bf funm_-quad}$ is a realization of
the restarted Arnoldi algorithm described in \cite{From}. This algorithm utilizes an integral representation for the error of the iterates in the Arnoldi
method which then allows one to develop a quadrature-based restarting algorithm suitable
for a large class of functions. It can be viewed as an improved version of the deflated restarting Krylov algorithm proposed in \cite{EEG}. The MATLAB codes can be downloaded
from {\it http://www.guettel.com/$funm_-quad$}.

$\bullet$ ${\bf Rich_-Kryl}$ is the restarted and residual-based Krylov-Richardson algorithm for computing the matrix exponential problem ${\rm exp}(-tA){\bf v}$ \cite{Residual}.

$\bullet$ {\bf TRA} and {\bf TRHA} are the thick-restarted Arnoldi algorithm and the thick-restarted harmonic Arnoldi algorithm (Algorithm 1), respectively, for
evaluating $\varphi_{\ell}(-tA){\bf v}$, $\ell=0,1,\ldots,s$.

\vspace{0.2cm}

We run the MATLAB functions {\it phipm, expv, ${\it funm_-kryl}$} and ${\it funm_-quad}$ using
their default parameters. In all the algorithms, the convergence tolerance for $\varphi$-functions is chosen as $tol=10^{-8}$, and the dimension $k$ for the Krylov subspace is set to be 30.
In the deflated Krylov subspace algorithms ${\it funm_-kryl}$, ${\it funm_-quad}$, TRA and TRHA,
we set the number $q$
of approximate eigenvectors retained from the previous cycles to be $5$, and augment the search subspace with approximate eigenvectors corresponding to the smallest approximate eigenvalues. The parameter $\gamma$ in TRHA is set to be $\gamma=0.01t$ in all the numerical examples. For the reduced matrices (projection matrices), the matrix exponential are computed by using the MATLAB built-in function {\it expm}, and the $\varphi_\ell~(\ell\geq 1)$ functions are computed by using the {\it phipade} function of the EXPINT package available from {\it http://www.math.ntnu.no/num/expint/}.

In the residual-based algorithms ${\rm Rich_-Kryl}$, TRA and TRHA,
we solve the initial value problems by using the {\it ode15s} ODE solver in MATLAB, whose absolute and relative tolerances are chosen as $10^{-9}$.
%{\bf In order to solve the small sized ODE (\ref{eqn3.41}), we have to store some values of
%right-hand side for different $t$ spanning the time interval of interest. In practical experiments, we find that (\ref{eqn3.41}) is difficult to be solved numerically when $t$ approaches to zero in the time interval of interest. To avoid this difficulty, we can solve the problems (\ref{6.2}) below for matrix functions $t^{\ell}\varphi_\ell(-t A){\bf v}$ in the same method for computing (\ref{equ49}). Let $t=1$, we can get $\varphi_\ell(-A){\bf v}$. If $t\neq 1$, we denote $A$ by $tA$.}
In the tables below,
we denote by ``CPU" the CPU time in seconds, and by ``Mv" the number of
matrix-vector products. Let ${\bf y}(t)$ be the ``exact" solution, and let $\widetilde{\bf y}(t)$ be
an approximation obtained from running the above algorithms,
then we define the relative error
$$
{\bf Error}=\frac{\|{\bf y}(t)-\widetilde{\bf y}(t)\|_{2}}{\|{\bf y}(t)\|_{2}}.
$$
If an algorithm does not converge
within an acceptable CPU timing (say, $6$ hours), then we stop and declare that the algorithm ``fails to converge".

\vspace{0.2cm}\textbf{Example 6.1.}~In this example, we compare TRHA with {\it phipm}, $\it{funm_-kryl}$ and TRA for
the computation of $\varphi$-functions, and show the efficiency of our new algorithm for solving (\ref{eqn1.3}) {\it simultaneously}.
The test problem is routinely used to study performance of stiff integrators \cite{Hair,Tok}.
Consider the following two-dimensional semilinear reaction-diffusion-advection equation
\begin{equation}\label{eqn6.5}
u_{t}=\varepsilon_{1}(u_{xx}+u_{yy})-\beta_{1}(u_{x}+u_{y})+\rho_{1} u\big(u-\frac{1}{2}(1-u)\big)
\end{equation}
defined on the unit square $\Omega=[0,1]^{2}$, which satisfies the homogeneous Dirichlet boundary conditions. We set $\varepsilon_{1}=0.02$, $\beta_{1}=-0.02$, $\rho_{1}=1$,
and use
$$
u(t=0,x,y)=256(xy(1-x)(1-y))^{2}+0.3
$$
as the initial condition.

\begin{center}
\begin{tabular}{|c|c|c|c|c|}
\multicolumn{4}{c}{{}}\\[3pt]\hline
~$\ell$~&~{\bf Algorithm}~&~{\bf CPU}~&~{\bf Error}~&~{\bf Mv}\\
\hline & phipm  &151.11 &$4.757\times 10^{-13}$  &3389\\
\cline{2-5} & $\rm funm_-kryl$  &168.03 &$8.000\times 10^{-12}$  &1355\\
\cline{2-5}$0$ & TRA &178.22 &$5.277\times 10^{-9}$  &1780\\ \cline{2-5}
& TRHA &163.17 &$1.413\times 10^{-8}$  &1630\\  \hline
\hline & phipm  &163.61 &$2.069\times 10^{-13}$  &2408\\
\cline{2-5} & $\rm funm_-kryl$  &221.22 &$7.116\times 10^{-12}$  &1255\\
\cline{2-5}$1$& TRA &159.58 &$1.513\times 10^{-9}$  &1655\\ \cline{2-5}
& TRHA &145.71 &$1.179\times 10^{-9}$  &1505\\  \hline
\hline & phipm  &148.03 &$4.198\times 10^{-13}$  &2149\\
\cline{2-5} & $\rm funm_-kryl$  &249.28 &$1.347\times 10^{-11}$  &1155\\
\cline{2-5}$2$& TRA &145.02 &$3.007\times 10^{-9}$  &1530\\ \cline{2-5}
& TRHA &131.26 &$1.703\times 10^{-9}$  &1380\\\hline
\hline & phipm  &138.88 &$9.892\times 10^{-14}$  &2035\\
\cline{2-5} & $\rm funm_-kryl$  &236.98 &$3.703\times 10^{-11}$  &1055\\
\cline{2-5}$3$& TRA &133.23 &$8.363\times 10^{-9}$  &1405\\ \cline{2-5}
& TRHA &119.95 &$4.685\times 10^{-9}$  &1280\\\hline
\hline & phipm  &601.63 &--  &9981\\
\cline{2-5}& $\rm funm_-kryl$  &875.50 &--  &4820\\
\cline{2-5}total& TRA &616.04 &--  &6370\\ \cline{2-5}
& TRHA &560.08 &--  &5795\\\hline
\end{tabular}\\[5pt]
\end{center}
{Example 6.1.~Table 1: Numerical results of the 2D reaction-diffusion-advection equation (\ref{eqn6.5}), the matrix size $n=N^2=250,000$. Compute $\varphi_\ell(-tA){\bf u}_0,~\ell=0,1,2,3$ {\it sequentially} (one by one) by
using TRA, TRHA, {\it phipm} and $\it funm_-kryl$.}\\

We discretize (\ref{eqn6.5}) spatially by standard finite differences with meshwidth $\Delta x=\Delta y=\frac{1}{N+1}$ and $N=500$.
This gives a system of ODEs of size $N^{2}$:
$$\mathbf{u}'(t)=-A\mathbf{u}(t)+\mathbf{f}(\mathbf{u}),~~\mathbf{u}(0)=\mathbf{u}_{0}.$$
This linear differential system can be efficiently solved by means of the exponential Runge-Kutta integrators \cite{Hochbruck-Ostermann-2010}.
More precisely, $\mathbf{u}(t_{\hat{n}+1})$ can be approximated from $t_{\hat{n}}$ to $t_{\hat{n}+1}=t_{\hat{n}}+\Delta t~(\hat{n}=0,1,2,\ldots)$  by $\mathbf{u}_{\hat{n}+1}$ defined as
$$\mathbf{u}_{\hat{n}+1}=\mathbf{u}_{\hat{n}}+\Delta t\sum_{i=1}^{\hat{s}}c_{i}(-\Delta tA)(\mathbf{f}_{\hat{n}i}-A\mathbf{u}_{\hat{n}}),$$
where
$$\mathbf{f}_{\hat{n}i}=\mathbf{f}(\mathbf{U}_{\hat{n}i}),~~ i=1,\ldots,\hat{s},$$
with
$$\mathbf{U}_{\hat{n}i}=\mathbf{u}_{\hat{n}}+\Delta t\sum_{j=1}^{i-1}a_{ij}(-\Delta tA)(\mathbf{f}_{\hat{n}j}-A\mathbf{u}_{\hat{n}}),$$
and the coefficients $c_{i}$, $a_{ij}$ are constructed from the $\varphi$-functions.
If Krogstad's four-stage scheme (see \cite{SK-2005} and Example 2.19 of \cite{Hochbruck-Ostermann-2010}) is used to integrate the system of ODEs,
one needs to compute the terms $\varphi_{\ell}(-\Delta tA)(\mathbf{f}_{\hat{n}1}-A\mathbf{u}_{\hat{n}})$ with $\ell=1,2,3$ simultaneously in each time step. Similarly,
if the generalized Lawson scheme (see Example 2.34 of \cite{Hochbruck-Ostermann-2010}) is used, the vectors $\varphi_{\ell}(-\frac{\Delta t}{2}A)\mathbf{\hat{v}}$  with $\ell=0,1,2$
are necessary to be approximated for the same vector $\mathbf{\hat{v}}$ in each time step.

In this example, we want to compute $\varphi_{\ell}(-tA)\mathbf{u_{0}}$ with $\ell=0,1,2,3$ and $t=1$. Here the ``exact" solutions
are obtained from running the MATLAB function {\it phipm} with convergence tolerance $tol=10^{-14}$.
In Table 1, we list the CPU time and the number of matrix-vector products for computing the four vectors {\it sequentially} (one by one); while in Table 2, we present those for evaluating the four vectors {\it simultaneously}. So as to illustrate the merit of TRHA for solving the four vectors simultaneously, in Table 1, we also list the total CPU time and the total number of matrix-vector products for computing the four vectors sequentially.

\begin{center}
\begin{tabular}{|c|c|c|c|c|}
\multicolumn{4}{c}{{}}\\[3pt]\hline
~$\ell$~&~{\bf Algorithm}~&~{\bf CPU}~&~{\bf Error}~&~{\bf Mv}\\
\hline  &   & &$8.000\times 10^{-12}$  &\\
$0\sim3$& $\rm funm_-kryl$ &663.25 &$2.454\times 10^{-13}$  &1355\\
&  & &$1.171\times 10^{-13}$  &\\
&  & &$8.952\times 10^{-14}$  &\\\hline
\hline  &   & &$5.277\times 10^{-9}$  &\\
$0\sim3$& TRA &204.91 &$1.513\times 10^{-9}$  &1780\\
&  & &$3.007\times 10^{-9}$  &\\
&  & &$8.363\times 10^{-9}$  &\\\hline
\hline  &   & &$1.413\times 10^{-8}$  &\\
$0\sim3$& TRHA &191.14 &$1.180\times 10^{-9}$  &1630\\
&  & &$1.703\times 10^{-9}$  &\\
&  & &$4.685\times 10^{-9}$  &\\\hline
\end{tabular}\\[5pt]
\end{center}
{Example 6.1.~Table 2: Numerical results of the 2D reaction-diffusion-advection equation (\ref{eqn6.5}), the matrix size $n=N^2=250,000$. Compute $\varphi_\ell(-tA){\bf u}_0,~\ell=0,1,2,3$ {\it simultaneously} by using TRA, TRHA and $\it funm_-kryl$.}\\

Three remarks are in order. First, we observe from Table 1 and Table 2 that, whether this problem is solved sequentially or simultaneously, TRHA always works better than the other algorithms
in terms of CPU time, especially when $\ell$ is large. Second, it is seen that when this problem is solved simultaneously, the total CPU time of TRHA are much less than those of {\it phipm} and $\it funm_-kryl$. More precisely, we can save about $\frac{2}{3}$ CPU time, 191.14 seconds vs. 601.63 and 663.25 seconds. However, we find that the number of matrix-vector products of {\it phipm} is less than those of TRA and TRHA. Indeed, solving small ODE problems during cycles is a large overhead for TRA and TRHA, and the number of matrix-vector products is not the whole story for accessing the computational complexities.
Third, the accuracy of the approximations obtained from {\it phipm} and $\it funm_-kryl$
can be (much) higher than that obtained from TRA and TRHA. The reason is due to the fact that we need to solve a small-sized ODE problem during each cycle of the residual based algorithms.

\begin{center}
\begin{tabular}{|c|c|c|c|c|}
\multicolumn{4}{c}{{}}\\[3pt]\hline
~$\ell$~&~{\bf Algorithm}~&~{\bf CPU}~&~{\bf Error}~&~{\bf Mv}\\
\hline & phipm  &171.07 &$1.356\times 10^{-14}$  &2477\\
\cline{2-5} & $\rm funm_-kryl$  &198.80 &$1.338\times 10^{-11}$  &1205\\
\cline{2-5}$1$ & TRA &147.50 &$1.946\times 10^{-10}$  &1555\\ \cline{2-5}
& TRHA &137.20 &$1.122\times 10^{-10}$  &1455\\  \hline
\hline & phipm  &154.43 &$2.977\times 10^{-13}$  &2239\\
\cline{2-5} & $\rm funm_-kryl$  &185.29 &$3.529\times 10^{-11}$  &1055\\
\cline{2-5}$2$& TRA &133.55 &$3.124\times 10^{-10}$  &1430\\ \cline{2-5}
& TRHA &121.45 &$1.898\times 10^{-10}$  &1305\\  \hline
\hline & phipm  &135.28 &$2.564\times 10^{-12}$  &2448\\
\cline{2-5} & $\rm funm_-kryl$  &161.12 &$7.971\times 10^{-11}$  &930\\
\cline{2-5}$3$& TRA &119.02 &$1.520\times 10^{-9}$  &1280\\ \cline{2-5}
& TRHA &107.43 &$9.209\times 10^{-10}$  &1155\\\hline
\hline & phipm  &137.17 &$4.843\times 10^{-13}$  &2844\\
\cline{2-5} & $\rm funm_-kryl$  &124.83 &$2.715\times 10^{-10}$  &805\\
\cline{2-5}$4$& TRA &105.40 &$6.177\times 10^{-9}$  &1130\\ \cline{2-5}
& TRHA &95.48 &$4.213\times 10^{-9}$  &1030\\\hline
\hline & phipm  &597.95 &--  &10008\\
\cline{2-5}& $\rm funm_-kryl$  &670.04 &--  &3995\\
\cline{2-5}total& TRA &505.47 &--  &5395\\ \cline{2-5}
& TRHA &461.55 &--  &4945\\\hline
\end{tabular}
\end{center}
{Example 6.2.~Table 3: Numerical results of the problem (\ref{6.2}), the matrix size $n=N^2=250,000$. Compute $\varphi_\ell(-t\hat{A}){\bf v},~\ell=1,2,3,4$ {\it sequentially} (one by one)
by using {\it phipm}, $\it funm_-kryl$, TRA and TRHA.

%\vspace{0.2cm}
\textbf{Example 6.2.}~ In this example, we consider the following stiff problems \cite{Goc}
\begin{equation}\label{6.2}
{\bf y}'(t)=-A{\bf y}(t)+\frac{t^{\ell-1}}{(\ell-1)!}{\bf v},\quad {\bf y}(0)={\bf 0},\quad \ell=1,2,\ldots,s.
\end{equation}
The exact solutions at time $t$ are ${\bf y}(t)=t^{\ell}\varphi_\ell(-t A){\bf v},~\ell=1,2,\ldots,s$.
Therefore, we need to solve the $s$ vectors simultaneously. In this example,
the matrix $-A\in \mathbb{R}^{n\times n}$ is the standard finite difference discretization matrix for the
two-dimensional Laplacian on the unit square with homogeneous Dirichlet boundary conditions, where we use
a regular grid with $n=N^{2}$ inner discretization points and mesh size $h=\frac{1}{N+1}$. The vector ${\bf v}=\big(g(ih,jh)\big)_{i,j=1}^{N}$ contains
the evaluations of the function $g(x,y)=30x(1-x)y(1-y)$ at the inner grid points.
We compute ${\bf y}(t)=\varphi_\ell(-t\hat{A}){\bf v},~\ell=1,2,3,4$ with $\hat{A}=0.025\times A$ and $t=1$. The ``exact" solutions
are obtained from running {\it phipm} with the convergence tolerance $tol=10^{-14}$.

It is seen from Tables 3--4 that both TRA and TRHA outperform the other algorithms, and we benefit from the thick-restarting strategy. Specifically, when the vectors are computed simultaneously, TRHA works much better than $\it funm_-kryl$ and {\it phipm} in terms of CPU time, 154.09 seconds vs. 462.72 and 597.95 seconds, a great improvement. On the other hand, we observe from the two tables that TRHA performs better than TRA in terms of both CPU time and the number of matrix-vector products. Furthermore, the accuracy of the approximations got from TRHA is a little higher than that of TRA. All these show the superiority of the harmonic projection technique over the orthogonal projection technique for $\varphi$-functions.

\begin{center}
\begin{tabular}{|c|c|c|c|c|}
\multicolumn{4}{c}{{}}\\[3pt]\hline
~$\ell$~&~{\bf Algorithm}~&~{\bf CPU}~&~{\bf Error}~&~{\bf Mv}\\
\hline  &   & &$1.338\times 10^{-11}$  &\\
$1\sim4$& $\rm funm_-kryl$ &462.72 &$1.204\times 10^{-12}$  &1205\\
&  & &$2.249\times 10^{-13}$  &\\
&  & &$1.395\times 10^{-13}$  &\\\hline
\hline  &   & &$1.946\times 10^{-10}$  &\\
 $1\sim4$  & TRA &164.71 &$3.126\times 10^{-10}$  &1555\\
&  & &$1.520\times 10^{-9}$  &\\
&  & &$6.177\times 10^{-9}$  &\\\hline
\hline  &   & &$1.122\times 10^{-10}$  &\\
$1\sim4$& TRHA &154.09 &$1.908\times 10^{-10}$  &1455\\
&  & &$9.217\times 10^{-10}$  &\\
&  & &$4.213\times 10^{-9}$  &\\\hline
\end{tabular}\\[5pt]
\end{center}
{Example 6.2.~Table 4: Numerical results of the problem (\ref{6.2}), the matrix size $n=N^2=250,000$. Compute $\varphi_\ell(-t\hat{A}){\bf v},~\ell=1,2,3,4$ {\it simultaneously} by using $\it funm_-kryl$, TRA and TRHA.}

%\vspace{0.2cm}
\textbf{Example 6.3.}~~This test problem is the {\it $G2_-circuit$} matrix arising
from the circuit simulation problem. It is size of $150,102 \times 150,102$, with $726,674$ nonzero elements, whose data file is available from the University of Florida Sparse Matrix
Collection: {\it http://www.cise.ufl.edu/research/sparse/matrices}.
In this example, we want to compute $\varphi_\ell(-A){\bf v},~\ell=0,1,2,3$ with $A=10 \times G2_-circuit$ and ${\bf v}=[1,1,\ldots,1]^{\rm T}$, by using {\it phipm}, $\rm funm_-kryl$, TRA and TRHA.
The ``exact" solutions are got from running {\it phipm} with the convergence tolerance $tol=10^{-14}$.
Tables 5 and 6 list the numerical results.

Again, it is observed from Tables 5--6 that TRHA works much better than the other algorithms
in terms of CPU time, especially when the vectors are computed simultaneously. However, the accuracy of the approximations obtained from {\it phipm} and $\it funm_-kryl$
can be (much) higher than that obtained from TRA and TRHA. As we have mentioned before, the reason is due to the fact that one needs to solve a small-sized ODE problem inexactly during each cycle of the two residual-based algorithms. Therefore, if accuracy is not the most important thing and one wants to solve (\ref{eqn1.3}) rapidly, TRHA is a competitive candidate for the $\varphi$-functions of very large matrices.

\begin{center}
\begin{tabular}{|c|c|c|c|c|}
\multicolumn{4}{c}{{}}\\[3pt]\hline
~$\ell$~&~{\bf Algorithm}~&~{\bf CPU}~&~{\bf Error}~&~{\bf Mv}\\
\hline & phipm  &121.43 &$8.843\times 10^{-14}$  &4982\\
\cline{2-5} & $\rm funm_-kryl$  &445.84 &$1.573\times 10^{-11}$  &1880\\
\cline{2-5}$0$ & TRA &153.04 &$6.406\times 10^{-9}$  &2780\\ \cline{2-5}
& TRHA &125.48 &$8.892\times 10^{-9}$  &2380\\  \hline
\hline & phipm  &130.30 &$2.890\times 10^{-14}$  &3144\\
\cline{2-5} & $\rm funm_-kryl$  &689.39 &$7.898\times 10^{-12}$  &1755\\
\cline{2-5}$1$& TRA &137.60 &$2.401\times 10^{-9}$  &2630\\ \cline{2-5}
& TRHA &110.82 &$1.668\times 10^{-9}$  &2230\\  \hline
\hline & phipm  &115.80 &$6.714\times 10^{-13}$  &2820\\
\cline{2-5} & $\rm funm_-kryl$  &763.95 &$1.967\times 10^{-11}$  &1605\\
\cline{2-5}$2$& TRA &126.84 &$4.788\times 10^{-9}$  &2480\\ \cline{2-5}
& TRHA &100.34 &$2.982\times 10^{-9}$  &2080\\\hline
\hline & phipm  &107.20 &$6.643\times 10^{-12}$  &2621\\
\cline{2-5} & $\rm funm_-kryl$  &698.22 &$1.020\times 10^{-10}$  &1430\\
\cline{2-5}$3$& TRA &115.23 &$1.259\times 10^{-8}$  &2280\\ \cline{2-5}
& TRHA &90.58 &$8.306\times 10^{-9}$  &1905\\\hline
\hline & phipm  &474.74 &--  &13567\\
\cline{2-5}& $\rm funm_-kryl$  &2597.40 &--  &6670\\
\cline{2-5}total& TRA &532.71 &--  &10170\\ \cline{2-5}
& TRHA &427.21 &--  &8595\\\hline
\end{tabular}
\end{center}
{Example 6.3.~Table 5: Numerical results of computing $\varphi_\ell(-A){\bf v},~\ell=0,1,2,3$ {\it sequentially} (one by one)
by using {\it phipm}, $\it funm_-kryl$, TRA and TRHA. The matrix $A=10\times G2_-circuit$, which is of size $n=150,102$.}

\begin{center}
\begin{tabular}{|c|c|c|c|c|}
\multicolumn{4}{c}{{}}\\[3pt]\hline
~$p$~&~{\bf Algorithm}~&~{\bf CPU}~&~{\bf Error}~&~{\bf Mv}\\
\hline  &   & &$1.573\times 10^{-11}$  &\\
$0\sim3$& $\rm funm_-kryl$ &2220.20 &$6.834\times 10^{-13}$  &1880\\
&  & &$4.309\times 10^{-13}$  &\\
&  & &$3.169\times 10^{-13}$  &\\\hline
\hline  &   & &$6.406\times 10^{-9}$  &\\
 $0\sim3$  & TRA &187.27 &$2.401\times 10^{-9}$  &2780\\
&  & &$4.788\times 10^{-9}$  &\\
&  & &$1.259\times 10^{-8}$  &\\\hline
\hline  &   & &$8.892\times 10^{-9}$  &\\
$0\sim3$& TRHA &157.04 &$1.667\times 10^{-9}$  &2380\\
&  & &$2.982\times 10^{-9}$  &\\
&  & &$8.306\times 10^{-9}$  &\\\hline
\end{tabular}\\[5pt]
\end{center}
{Example 6.3.~Table 6: Numerical results of computing $\varphi_\ell(-A){\bf v},~\ell=0,1,2,3$ {\it simultaneously}
by using $\it funm_-kryl$, TRA and TRHA. The matrix $A=10\times G2_-circuit$, which is of size $n=150,102$.}

\vspace{0.2cm}\textbf{Example 6.4.}~ In this example, the test matrix is generated by using the MATLAB function ``\texttt{gallery}":
$A=-gallery('lesp',6000)$. It returns a $6000\times 6000$ tridiagonal matrix with real, sensitive eigenvalues.
We compute $\varphi_{\ell}(-A)\mathbf{v},~\ell=1,2,3,4$ by {\it phipm}, $\it funm_-kryl$, TRA and TRHA,
where ${\bf v}$ is set to be the vector of all ones.
The ``exact" solutions are derived from running the MATLAB function {\it phipade} of the EXPINT package \cite{Ber2}. Tables 7--8 list the numerical results.

It is seen from the numerical results that TRHA still works quite well for the matrix with sensitive eigenvalues. Indeed, TRA and TRHA outperform {\it phipm} and $\it funm_-kryl$ considerably in terms of CPU time, while TRHA
performs the best in many cases. Furthermore, one can save about one half of CPU time if the 4 vectors are computed simultaneously instead of sequentially. For this test problem, if the vectors are evaluated one by one, we observe from Table 7 that the accuracy of the approximations obtained from TRA and TRHA is comparable to that of the approximations from running $\it funm_-kryl$.

\begin{center}
\begin{tabular}{|c|c|c|c|c|}
\multicolumn{4}{c}{{}}\\[3pt]\hline
~$\ell$~&~{\bf Algorithm}~&~{\bf CPU}~&~{\bf Error}~&~{\bf Mv}\\
\hline & phipm  &50.03 &$2.824\times 10^{-10}$  &2572\\
\cline{2-5} & $\rm funm_-kryl$  &110.79 &$6.538\times 10^{-9}$  &1055\\
\cline{2-5}$1$ & TRA &31.88 &$2.546\times 10^{-8}$  &1280\\ \cline{2-5}
& TRHA &30.54 &$1.508\times 10^{-8}$  &1205\\  \hline
\hline & phipm  &44.67 &$1.986\times 10^{-10}$  &2273\\
\cline{2-5} & $\rm funm_-kryl$  &108.58 &$1.096\times 10^{-8}$  &930\\
\cline{2-5}$2$& TRA &27.16 &$3.085\times 10^{-8}$  &1180\\ \cline{2-5}
& TRHA &25.97 &$2.343\times 10^{-8}$  &1105\\  \hline
\hline & phipm  &42.36 &$1.209\times 10^{-9}$  &2131\\
\cline{2-5} & $\rm funm_-kryl$  &96.94 &$1.864\times 10^{-8}$  &830\\
\cline{2-5}$3$& TRA &24.45 &$1.210\times 10^{-7}$  &1080\\ \cline{2-5}
& TRHA &22.93 &$7.545\times 10^{-8}$  &1005\\\hline
\hline & phipm  &42.78 &$8.563\times 10^{-10}$  &2145\\
\cline{2-5} & $\rm funm_-kryl$  &77.03 &$4.985\times 10^{-8}$  &730\\
\cline{2-5}$4$& TRA &21.57 &$5.240\times 10^{-7}$  &955\\ \cline{2-5}
& TRHA &19.89 &$3.367\times 10^{-7}$  &880\\\hline
\hline & phipm  &179.84 &--  &9121\\
\cline{2-5}& $\rm funm_-kryl$  &393.34 &--  &3545\\
\cline{2-5}total& TRA &105.05 &--  &4495\\ \cline{2-5}
& TRHA &99.33 &--  &4195\\\hline
\end{tabular}
\end{center}
{Example 6.4.~Table 7: Numerical results of computing $\varphi_\ell(-A){\bf v},~\ell=1,2,3,4$ {\it sequentially} (one by one)
by using {\it phipm}, $\it funm_-kryl$, TRA and TRHA. The matrix $A=-gallery('lesp',6000)$, which is of size $n=6000$.}\\

\begin{center}
\begin{tabular}{|c|c|c|c|c|}
\multicolumn{4}{c}{{}}\\[3pt]\hline
~$\ell$~&~{\bf Algorithm}~&~{\bf CPU}~&~{\bf Error}~&~{\bf Mv}\\
\hline  &   & &$6.538\times 10^{-9}$  &\\
$1\sim4$& $\rm funm_-kryl$ &267.98 &$4.259\times 10^{-10}$  &1055\\
&  & &$4.617\times 10^{-11}$  &\\
&  & &$6.551\times 10^{-12}$  &\\\hline
\hline  &   & &$2.546\times 10^{-8}$  &\\
 $1\sim4$  & TRA &42.29 &$3.088\times 10^{-8}$  &1280\\
&  & &$1.209\times 10^{-7}$  &\\
&  & &$5.240\times 10^{-7}$  &\\\hline
\hline  &   & &$1.508\times 10^{-8}$  &\\
$1\sim4$& TRHA &40.49 &$2.364\times 10^{-8}$  &1205\\
&  & &$7.524\times 10^{-8}$  &\\
&  & &$3.364\times 10^{-7}$  &\\\hline
\end{tabular}\\[5pt]
\end{center}
{Example 6.4.~Table 8: Numerical results of computing $\varphi_\ell(-A){\bf v},~\ell=1,2,3,4$ {\it simultaneously}
by using $\it funm_-kryl$, TRA and TRHA. The matrix $A=-gallery('lesp',6000)$, which is of size $n=6000$.}\\

Since the computation of $\varphi$-functions can be rewritten in terms of a single matrix exponential by considering a slightly augmented matrix \cite{MA,Saad,Sidje}, it is interesting to investigate the numerical approximation of the matrix exponential applied to a vector. In the following two examples, we try to compare our TRHA algorithm with some state-of-the-art algorithms, such as {\it expv}, {\it phipm}, ${\it funm_-kryl}$ and ${\it funm_-quad}$ for matrix exponential.

\vspace{0.2cm}\textbf{Example 6.5.}~
In this example, we compare our TRHA algorithm with {\it expv, ${\it funm_-kryl, funm_-quad}$, ${\it Rich_-Kry}$}
and TRA, and show the efficiency of the new algorithm
for matrix exponential. In this example, the ``exact" solutions are derived from running the MATLAB built-in function {\it expm.m}.

There are two test problems in this example.
The first one is from \cite{Pang}. We consider pricing options for a single underlying asset in Merton's
jump-diffusion model \cite{RM-1976}. In
Merton's model, jumps are normally distributed with mean $\hat{\mu}$ and variation $\sigma$. The
option value $w(\xi,\tau)$ with logarithmic price $\xi$ and backward time $\tau$ satisfies a forward PIDE on $(-\infty,+\infty)\times [0,t]$:
\begin{equation} \label{equ1.2}
w_{\tau}=\frac{\nu^{2}}{2}w_{\xi\xi}+(r-\hat{\lambda}\kappa-\frac{\nu^{2}}{2})w_{\xi}-(r+\hat{\lambda})w+\hat{\lambda}\int^{\infty}_{-\infty}w(\xi+\eta,\tau)\phi(\eta)d\eta,
\end{equation}
where $t$ is the maturity time, $\nu$ is the stock return volatility, $r$ is the risk-free interest
rate, $\hat{\lambda}$ is the arrival intensity of a Poisson process, $\kappa=\exp(\hat{\mu}+\frac{\sigma^{2}}{2})-1$ is the expectation of the impulse function,
and $\phi$ is the Gaussian distribution given by
\begin{equation*}
\phi(\eta)=\frac{\exp(-(\eta-\hat{\mu})^{2})/2\sigma^{2}}{\sqrt{2\pi}\sigma}.
\end{equation*}
For a European call option, the initial condition is
\begin{equation}\label{equ1.3}
w(\xi,0)=\max(K\exp(\xi)-K,0),
\end{equation}
where $K$ is the strike price \cite{RM-1976}.  Similar to \cite{Pang}, we truncate the $\xi$-domain $(-\infty,\infty)$ to $[\xi_{1},\xi_{2}]$
and then divide $[\xi_{1},\xi_{2}]$ into $n+1$ subintervals with a uniform mesh size $h_{\xi}$.
By approximating the differential part of (\ref{equ1.2}) by central difference discretization, we can obtain an $n \times n$ tridiagonal Toeplitz matrix
$\mathcal{D}_{n}$. For the integral term in (\ref{equ1.2}), the localized part can be expressed in discrete form by using the rectangle rule. The corresponding
operator is an $n \times n$ Toeplitz matrix $\mathcal{I}_{n}$. Then
the real nonsymmetric Toeplitz matrix $A=-\mathcal{D}_{n}-\hat{\lambda}\mathcal{I}_{n}$
is the coefficient matrix of the semidiscretized system with regard to $\tau$. The option
price at $\tau=t$ requires evaluating the exponential term $\exp(-tA){\bf w}$, where ${\bf w}$ is the discretized form of the initial value in (\ref{equ1.3});
see \cite{Pang} for more details. The input parameters of this problem are $\xi_{1}=-2$, $\xi_{2}=2$, $K=100$, $\nu=0.25$, $r=0.05$, $\hat{\lambda}=0.1$, $\hat{\mu}=-0.9$ and $\sigma=0.45$.
Table 9 presents the numerical results of this problem when $t=1$ and $n=4000$.\\

\begin{center}%\footnotesize
\begin{tabular}
{|c|c|c|c|c|}
\hline
  {\bf Algorithm} &  \lower.4ex\hbox{{\bf CPU}} &  \lower.4ex\hbox{{\bf Error}} & \lower.4ex\hbox{{\bf Mv}} \\ \hline
 expv &188.09&$1.173\times 10^{-11}$ &23188\\
 \hline
 ${\rm funm_-kryl}$ &3894.90&$1.191\times 10^{-11}$  &3505 \\
 \hline
 ${\rm funm_-quad}$ &n.c.&n.c.   &n.c.\\
 \hline
 ${\rm Rich_-Kryl}$ &202.56&$1.990\times 10^{-8}$  &5700\\
 \hline
  TRA &118.27&$9.003\times 10^{-9}$  &5430 \\
 \hline
  TRHA &115.83&$1.836\times 10^{-9}$ &4805\\
 \hline
\end{tabular}\\[5pt]
\end{center}
{Example 6.5.~Table 9: Numerical results of the six algorithms on the first test problem, where ``n.c." denotes ``fails to converge".}\\

The second test problem is from numerical solution of the
following fractional diffusion equation \cite{Pang-Sun-2012,WWS-2010}:
\begin{equation}\label{equ1.1}
\left\{\begin{array}{l}
\frac{\partial u(x,t)}{\partial t}-d_{+}(x)\frac{\partial ^{\alpha}u(x,t)}{\partial_{+}x^{\alpha}}-
 d_{-}(x)\frac{\partial ^{\alpha}u(x,t)}{\partial_{-}x^{\alpha}}=f(x,t), \\
x\in (0,2), ~~t\in (0,1],\\
u(0,t)=u(2,t)=0, ~~t\in [0,1],\\
u(x,0)=u_{0}(x), ~~x\in [0,2].
 \end{array}\right.
\end{equation}
In this equation, we set the the coefficients
$$d_{+}(x)=\frac{Gamma(3-\alpha)}{100}x^{\alpha},$$ and
$$d_{-}(x)=\frac{Gamma(3-\alpha)}{100}(2-x)^{\alpha},$$
where $1 < \alpha < 2$ and $Gamma$ is the Gamma function. We refer to \cite{I-1999} for the definition of the fractional
order derivative. After spatial discretization by using the shifted Gr\"{u}nwald formula \cite{M-C-1999},
the equation
(\ref{equ1.1}) reduces to a semidiscretized ordinary differential equation with the coefficient matrix $A_{h}=-\frac{1}{h^{\alpha}}(D_{+}G+D_{-}G^{{\rm T}})$, where
 $h$ is the spatial grid size,
 $D_{+}$ and $D_{-}$ are diagonal matrices arising from the discretization of the diffusion coefficient $d_{+}(x)$ and $d_{-}(x)$, and $G$
is a lower Hessenberg Toeplitz matrix generated by the discretization of the fractional derivative.
In this experiment, we choose $\alpha=1.8$, $t=1$, and compute ${\rm exp}(-tA_{h}){\bf v}$ with $\bf v$ being the vector of all ones and the size of the matrix being $n=4000$. Table 10 lists the numerical results.

\begin{center}
\begin{tabular}
{|c|c|c|c|c|}
\hline
  {\bf Algorithm} &  \lower.4ex\hbox{{\bf CPU}} &  \lower.4ex\hbox{{\bf Error}} & \lower.4ex\hbox{{\bf Mv}} \\ \hline
  expv &130.21&$4.717\times 10^{-11}$ &15872\\
  \hline
  $\rm funm_-kryl$ &2154.30&$1.230\times 10^{-10}$  &3030 \\
 \hline
 $\rm funm_-quad$ &63.72&$4.320\times 10^{-11}$   &3230\\
 \hline
 $\rm Rich_-Kryl$ &109.28&$1.066\times 10^{-7}$  &5280\\
 \hline
  TRA &73.37&$3.009\times 10^{-8}$  &4555 \\
 \hline
  TRHA &69.96&$1.941\times 10^{-8}$ &4080\\
 \hline
\end{tabular}\\[5pt]
{Example 6.5.~Table 10: Numerical results of the six algorithms on the second test problem.}
\end{center}

Two remarks are given. First, we see from Tables 9 and 10 that TRHA outperforms the other algorithms in terms of CPU time in most cases.
In particular, we observe that TRA and TRHA perform much better than $\it Rich_-Kry$ in terms of CPU time and the number of matrix-vector products. This shows that the convergence speed of the Krylov subspace algorithm can be improved significantly by using the thick-restarting strategy.
Second, similar to the above numerical experiments, we notice that the accuracy of the approximations obtained from {\it expv}, $\it funm_-quad$ and $\it funm_-kryl$ can be (much) higher than that from the residual based algorithms $\it Rich_-Kry$, TRA and TRHA. As we have mentioned before, the reason is due to the fact that we have to solve a small-sized ODE problem with the tolerance being $10^{-9}$ in each cycle of the residual-based algorithms.

\vspace{0.2cm}\textbf{Example 6.6.}~ In this example, we consider the matrix exponential problem of a large matrix.
The test matrix $A$ is the {\it apache1} matrix arising from the structural problem. It is  of size $80800 \times 80800$, with $542184$ nonzero elements. The data file is available from the University of Florida Sparse Matrix
Collection: {\it http://www.cise.ufl.edu/research/sparse/matrices}.
In this example, we try to evaluate ${\rm exp}(-A){\bf v}$ with ${\bf v}$ being the vector of all ones. Table 11 reports the numerical results.

\begin{center}%\footnotesize
\begin{tabular}
{|c|c|c|c|c|}
\hline
 {\bf Algorithm}  &  \lower.4ex\hbox{{\bf CPU}} &  \lower.4ex\hbox{{\bf Error}} & \lower.4ex\hbox{{\bf Mv}} \\ \hline
  expv &74.44&$8.424\times 10^{-12}$ &11036\\
 \hline
 $\rm funm_-kryl$ &695.72&$6.719\times 10^{-11}$  &2155 \\
 \hline
  $\rm funm_-quad$ &n.c.&n.c.   &n.c.\\
 \hline
 $\rm Rich_-Kry$ &83.50&$5.688\times 10^{-9}$  &3600\\
 \hline
  TRA &85.34&$1.627\times 10^{-9}$  &3430 \\
 \hline
  TRHA &73.38&$1.003\times 10^{-9}$ &3005\\
 \hline
\end{tabular}\\[5pt]
\end{center}
{Example 6.6.~Table 11: Numerical results of the six algorithms on ${\rm exp}(-A){\bf v}$, where ``n.c." denotes ``fails to converge".}\\

As the MATLAB function {\it expm.m} is infeasible
for very large matrices, in this example, we use the MATLAB function ${\it expmv_-tspan}$ \cite{A.H.} to compute the ``exact" solution of the exponential function,
The MATLAB codes are available from {\it http://www.maths.manchester.ac.uk/\textasciitilde almohy/ papers.html}.
Again, the numerical results show that our new algorithm is superior to
the other algorithms in terms of CPU time, and the residual-based TRA and TRHA algorithms are suitable for exponential of very large matrices.
Specifically, TRHA performs much better than the two deflated Krylov subspace algorithms $\it funm_-kryl$ and $\it funm_-quad$.
Similar to the above numerical examples, we remark that the number of matrix-vector products is not the whole story for
the matrix exponential problem. For instance, we notice that TRHA used $3005$ matrix-vector products and $73.38$ seconds, while {\it expv} used
$11036$ matrix-vector products and $74.44$ seconds. The reason is that one requires to solve an ODE problem in each outer iteration (cycle) of the TRHA algorithm. Experimentally, we find that if the number of restarting is large, solving the ODE problems during cycles will bring us a large amount of computational overhead.

%\section{Conclusions}
%
%In order to solve the small sized ODE (\ref{eqn3.41}), we have to store some values of
%right-hand side for different $t$ spanning the time interval of interest. In practical experiments, we find that (\ref{eqn3.41}) is difficult to be solved numerically when $t$ approaches to zero in the time interval of interest.

%\section*{Acknowledgments}

\begin{center}
\def\refname{\large \bfseries References}
\end{center}

\end{document}